\documentclass[final,3p,times]{elsarticle}
\usepackage{lineno}

\usepackage{mathtools}
\usepackage{amsmath,amssymb,amsfonts,amsthm}
\newtheorem{theorem}{Theorem}
\newtheorem{proposition}{Proposition}
\newtheorem{corollary}{Corollary}
\newtheorem{remark}{Remark}
\newtheorem{definition}{Definition}
\usepackage{bm}
\usepackage{hyperref}
\usepackage[capitalize]{cleveref}
\crefname{equation}{}{}
\Crefname{equation}{}{}

\usepackage{subcaption}
\usepackage{algorithm}
\usepackage{algpseudocode}
\algrenewcommand\algorithmicrequire{\textbf{Input:}}
\algrenewcommand\algorithmicensure{\textbf{Output:}}
\usepackage{tikz}
\usetikzlibrary{spy,shapes,arrows,positioning,calc,decorations.markings}
\usepackage{pgfplots}
\pgfplotsset{compat=1.18}
\usepgfplotslibrary{colorbrewer} 
\usepackage{pgfplotstable}
\usepackage{booktabs}
\usepackage{colortbl}
\usepackage{makecell}

\newcommand*\circled[1]{\tikz[baseline=(char.base)]{\node(char)[shape=rounded rectangle,draw,inner sep=0.6pt,minimum height=1.5ex]{#1};}} %
\newcommand\kronF[2]{{#1}^{\circled{\tiny{\ensuremath{#2}}}}} 
\renewcommand\Vec[1][]{\textnormal{vec}\ensuremath{\if$#1$ \else \left[#1\right]\fi}}

\newcommand{\real}{\mathbb{R}}

\newcommand{\rd}{\text{\upshape d}} 

\newcommand{\bzero}{\ensuremath{\mathbf{0}}} 

\newcommand{\bA}{\ensuremath{\mathbf{A}}}
\newcommand{\bB}{\ensuremath{\mathbf{B}}}
\newcommand{\bC}{\ensuremath{\mathbf{C}}}
\newcommand{\bD}{\ensuremath{\mathbf{D}}}
\newcommand{\bE}{\ensuremath{\mathbf{E}}}
\newcommand{\bF}{\ensuremath{\mathbf{F}}}
\newcommand{\bG}{\ensuremath{\mathbf{G}}}

\newcommand{\bI}{\ensuremath{\mathbf{I}}}
\newcommand{\bJ}{\ensuremath{\mathbf{J}}}
\newcommand{\bK}{\ensuremath{\mathbf{K}}}

\newcommand{\bM}{\ensuremath{\mathbf{M}}}

\newcommand{\bQ}{\ensuremath{\mathbf{Q}}}
\newcommand{\bR}{\ensuremath{\mathbf{R}}}

\newcommand{\bT}{\ensuremath{\mathbf{T}}}

\newcommand{\bV}{\ensuremath{\mathbf{V}}}

\newcommand{\bZ}{\ensuremath{\mathbf{Z}}}

\newcommand{\bb}{\ensuremath{\mathbf{b}}}

\renewcommand{\bf}{\ensuremath{\mathbf{f}}}
\newcommand{\bg}{\ensuremath{\mathbf{g}}}

\newcommand{\bq}{\ensuremath{\mathbf{q}}}
\newcommand{\br}{\ensuremath{\mathbf{r}}}

\newcommand{\bu}{\ensuremath{\mathbf{u}}}
\newcommand{\bv}{\ensuremath{\mathbf{v}}}

\newcommand{\bx}{\ensuremath{\mathbf{x}}}
\newcommand{\by}{\ensuremath{\mathbf{y}}}
\newcommand{\bz}{\ensuremath{\mathbf{z}}}

\newcommand{\cG}{\ensuremath{\mathcal{G}}}

\newcommand{\cJ}{\ensuremath{\mathcal{J}}}

\newcommand{\cL}{\ensuremath{\mathcal{L}}}

\newcommand{\cV}{\ensuremath{\mathcal{V}}}

\newcommand{\cf}{\ensuremath{\bm{f}}}

\journal{Journal of Computational Physics}

\begin{document}

\begin{frontmatter}
    \title{Nonlinear Feedback Control in High Dimensions using the Polynomial-Polynomial Regulator} 
    \author{Nicholas A. Corbin} 
    \author{Boris Kramer} 
    \affiliation[1]{organization={Department of Mechanical and Aerospace Engineering, University of California San Diego},
        addressline={9500 Gilman Drive},
        city={La Jolla},
        postcode={92093},
        state={CA},
        country={USA}}

    \begin{abstract}
        We consider the problem of computing Taylor approximations to the optimal feedback law and value function for infinite-horizon finite-dimensional nonlinear optimal control problem---also known as Al'brekht's method.
        The scalability of existing Al'brekht-based approaches to computing approximate solutions to the Hamilton-Jacobi-Bellman equation associated with the optimal control problem has been severely limited by computational resources.
        The computational time issue has been partially mitigated by recent works leveraging tensor-structure to develop efficient numerical methods.
        However, RAM requirements still have remained a major hindrance to scaling beyond state dimensions of a few hundred or low thousands.
        We propose an acceleration to compute the expensive higher-order terms in a reduced space, mitigating the curse of dimensionality.
        A fast and scalable \textsc{Matlab} implementation is provided open-source for general purpose use, with the ability to accommodate arbitrary polynomial nonlinearities in the control-affine state equation, as well as polynomial state-dependence in the cost function, and the ability to handle systems in generalized form with a non-identity mass matrix, such as finite element models.
        Combined with the exploitation of sparsity and modern low-rank Riccati solvers, this permits computing approximations to polynomial feedback laws based on Al'brekht's method in hundreds of thousands of states.
    \end{abstract}



    \begin{keyword}
        Optimal Control \sep
        Hamilton-Jacobi-Bellman Equations \sep
        Polynomial-Polynomial Regulator


    \end{keyword}

\end{frontmatter}


\section{Introduction}\label{ch3:sec:introduction}
Hamilton-Jacobi-Bellman partial differential equations (HJB PDEs) are ubiquitous in many active areas of research, including optimal control, safety-critical control, and model order reduction.
HJB PDEs have been studied thoroughly for decades (see \cite{Bardi2013} for an overview).
Entire fields have been devoted to the theoretical study of solutions to HJB PDEs, even for highly nonlinear systems for which the HJB PDE may not admit a solution in the classical sense, leading to the notion of viscosity solutions \cite{Crandall1983}.
And yet, commonly available solvers for HJB PDEs are far from satisfactory, often only being suitable for examples with a few degrees of freedom.
This is due to the \textit{curse of dimensionality}, which refers to the exponential increase in the computational complexity of solving the HJB PDE with respect to increasing the state dimension.

It is true that for the most general class of nonlinear systems,
as well as for systems featuring state and control constraints,
the solutions to HJB PDEs are very irregular, featuring kinks and other strongly nonlinear, non-smooth characteristics that make computing solutions very challenging.
For such systems, the study of viscosity solutions is necessary, and sophisticated and mature solvers exist to approximate solutions in dimensions on the order of $n=5$ to $n=14$, including grid-based and semi-Lagrangian schemes \cite{Falcone2014,Falcone2016}, sparse grid adaptations \cite{Kang2017}, iterative schemes based on value/policy iteration and Galerkin approximation \cite{Beard1997,Kalise2018}, and low-rank methods such as those based on tensor-train representations \cite{Gorodetsky2018}.
There are also a variety of data-driven \cite{Dolgov2023,Ehring2024} and machine learning methods \cite{NakamuraZimmerer2021}, along with of course the open-loop methods \cite{Ito2001} and other related approaches which devise ways to avoid explicitly solving the HJB PDE, such as model predictive control \cite{Rawlings2020}.

On the other hand, there exist many applications in which the nonlinearities are mild and the HJB solutions are at least locally well-posed and smooth.
For such systems, polynomial expansions have long been known to be suitable for computing local nonlinear approximations to the HJB solutions with improved tractability.
Commonly known as Al'brekht's method \cite{Albrekht1961}, the approach of computing a Taylor expansion to the solutions to the HJB PDE for sufficiently smooth nonlinear systems provides a straightforward improvement over linearization (see \cite{Krener2013} for a survey).
Krener's Nonlinear Systems Toolbox \cite{Krener2019}, originally introduced in 1997, provided an automated symbolic implementation of Al'brekht's method, although its scalability to high-dimensional systems was still limited.
Scalability began to improve with the development of specialized linear solvers \cite{Grasedyck2004,Chen2019}, bringing renewed interest to Al'brekht's method in a series of recent works \cite{Breiten2018,Breiten2019a,Almubarak2019,Borggaard2018,Borggaard2020,Borggaard2021,Kramer2024}.
These enabled scaling to state dimensions as high as $n=20$ for systems with polynomial drift nonlinearities and quadratic cost functions in \cite{Borggaard2021}, and up to $n=64$ for the special case of quadratic drift nonlinearities in \cite{Borggaard2020}.
In \cite{Breiten2018}, using a related approach, the authors demonstrate scalability as high as $n=100$ for bilinear systems.
Extending that approach from bilinear systems to quadratic systems, the authors in \cite{Breiten2019a} compute a polynomial control law to stabilize the two-dimensional Navier-Stokes equations.
The full-order model considered therein is of dimension $n=O(10{,}000)$, but the authors circumvent the dimensionality via a specialized Leray projection that is possible for the Navier-Stokes equation \cite{Benner2013b}.
For the specific problem considered,
this permits them to directly compute an approximation of the feedback law
without ever explicitly forming the value function or feedback law coefficients.
Our recent work \cite{Corbin2024e} built on the general tensor algebra approaches of \cite{Borggaard2020,Borggaard2021} to generalize to polynomial drift \textit{and} polynomial input nonlinearities, along with polynomial state penalties in the cost function, as is permitted in the theory provided in \cite{Albrekht1961,Lukes1969}.
Scalability was also increased to $n=129$ for a degree 4 value function approximation.
With these improvements, our previous works permitted the design of nonlinear feedback controllers for fairly general control-affine nonlinear systems with identity mass matrices with a high degree of scalability relative to other HJB approaches.
Scalability beyond the low hundreds of states remained an issue though.

More-so than computational time, memory usage is the main hindrance to further scalability of Al'brekht's method.
The number of unknowns in each value function coefficient scales exponentially with respect to the state dimension and polynomial degree, so while recent research efforts \cite{Borggaard2020,Borggaard2021,Kramer2024,Corbin2024e,Corbin2025a} have accelerated the computations using specialized linear solvers that leverage the tensor structure of the equations, memory usage has remained an open challenge.
For example, a degree~4 coefficient for a state-dimension of $n=1000$ requires solving a linear system with $1000^4$ unknowns.
Using IEEE double-precision floating-point arithmetic, each number requires 64 bits or 8 bytes;
hence even just the right-hand-side vector $\bb$ in $\bA\bx=\bb$ requires $8 \times1000^4$ bytes,
or 8 TB of RAM.
As a result, all of the Al'brekht-based works we are aware of hit limits on the order of $n=O(100)$ for a degree~4 approximation without additional approximations---significantly higher than $n=5$ to $n=10$, but still nowhere near the scale of typical PDE semi-discretizations such as finite element models.

A common approach that has been used to mitigate
memory and other computational obstacles when dealing with large models
is model order reduction, typically by balanced truncation or proper orthogonal decomposition (POD) when solving HJB PDEs \cite{Kunisch2004,Alla2017,Breiten2018}.
Rather than computing the value function and optimal control for a full-order model of dimension $n$, a reduced-order model of dimension $r \ll n$
can be computed and used to approximate the value function and optimal feedback law on the reduced model.
This is sometimes called the \textit{reduce-then-design} approach.
Depending on the method to compute HJB solutions on the reduced-order model, more or less reduction may be necessary.
In \cite{Kunisch2004}, a reduced-order model of dimension $r=4$ is computed using POD, after which the HJB PDE is solved on a discretized grid.
In \cite{Alla2017}, the full-order models are reduced to dimension $r=4$ using balanced truncation and POD, and
the HJB PDE is solved using a semi-Lagrangian scheme and value iteration.
The authors also propose a reduction based on the solution to the algebraic Riccati equation, which is closely related to a part of the approach proposed in this work.
In \cite{Breiten2018}, bilinear balanced truncation \cite{Benner2011} is used to reduced the dimension of the model to a range $r=O(100)$, after which Al'brekht's method is used to compute a Taylor approximation of the value function and feedback laws.

A criticism of the POD approach is the requirement for simulation data to perform the reduction, as these simulations may be difficult to perform for expensive models and are not guaranteed to capture the relevant dynamics, especially in the case of unstable plant models for which a stabilizing control is to be designed.
Balanced truncation similarly poses issues for unstable plant models, as standard balancing is defined for asymptotically stable systems.
Furthermore, balanced truncation is defined for input-output systems with a measurement equation, whereas the optimal state feedback problem does not have a measurement equation defined.
There are closed-loop balancing formulations that extend to unstable plant models \cite{Jonckheere1983,Mustafa1991}, but to the authors' knowledge their usage in the context of attempting to compute solutions to HJB PDEs has never been explored and likely introduces additional challenges, both theoretical and practical.
Regardless of the approach used, the existing works feature a major drawback: in the pursuit of making the computations of the \textit{nonlinear} components of the feedback law solution feasible and more efficient, significant accuracy in the \textit{linear} component of the solution may be lost due to the reduction, especially in the case of a severe reduction.

In this work, we build on our previous works on Al'brekht's method \cite{Corbin2024e,Corbin2025a} to enable scalability to high-dimensional, sparse models with non-trivial mass matrices, such as those obtained by finite element discretization.
Compared to our existing works \cite{Corbin2024e,Corbin2025a}, we provide both theoretical and algorithmic improvements to the implementation of Al'brekht's method to achieve better scalability.
Our first contribution is the adaptation of the theory of Al'brekht to models with generalized dynamics featuring a nonsingular mass matrix, which permits exploiting sparsity and low-rank structure often found in semi-discretized PDE models.
The second major contribution of this work is a novel dimensionality reduction approach tailored specifically to the Taylor series method of Al'brekht.
The proposed reduction approach retains the accuracy and stability properties of the full-order optimal control linearization locally and only applies reduction to the higher-order nonlinear components of the feedback law.
The third contribution is an algorithm that also integrates with existing low-rank generalized Riccati solvers \cite{Benner2017b,Saak2022} to greatly accelerate the computations of the nonlinear feedback laws. The presented results
demonstrate the capability to design nonlinear feedback laws in state dimensions on the order of $O(100{,}000)$, which is several orders of magnitude higher than existing approaches.
The fast, scalable, and user-friendly \textsc{Matlab} implementation is available in the open-source Polynomial-Polynomial Regulator (PPR) repository:
\begin{center}
    \url{https://github.com/cnick1/PPR}
\end{center}
The exact version of the code used in this paper can be found under the \texttt{v1.0.1} tag for reproducibility, along with documentation in the \textsc{readme} file and all of the numerical examples.

The paper is organized as follows.
After defining some preliminary notation in \cref{ch3:sec:preliminaries},
we describe Al'brekht's method for computing Taylor series approximations to the HJB PDEs arising in the optimal control problem in \cref{ch3:sec:PPR-Poly}.
This section contains our first major contributions, which include the generalization to systems with non-identity mass matrices and full polynomial state nonlinearities in the state equation and cost function.
The associated computational approach, summarized in \cref{ch3:alg:full-PPR}, permits computing for example
a cubic Taylor approximation to the optimal feedback law for models with state dimensions on the order of $n=O(100)$ before reaching RAM limitations.
In \cref{ch3:sec:reduced-Albrekht}, we present our second main result, which is the proposed method for overcoming these memory limitations using model order reduction, enabling scalability to models with state dimensions on the order of $n=O(100{,}000)$.
We illustrate the method first on some low-dimensional examples in \cref{ch3:sec:examples}, after which in \cref{ch3:sec:allen-cahn} we demonstrate the ability to scale to high-dimensional semi-discretized PDE models.
Finally, we summarize our conclusions and future directions in \cref{ch3:sec:conclusions}.

\section{Preliminaries}\label{ch3:sec:preliminaries}
Throughout this work,
matrices are denoted by capital bold letters, vectors by lowercase bold letters, and scalars by lowercase letters.
For a square matrix $\bA$,
we use the notation $\bA \succ 0$ to indicate that it is positive definite, and $\bA \succeq 0$ to indicate that it is positive semidefinite.
As a matter of convention, we consider the \emph{gradient} $\partial V(\bx)/\partial \bx$ of a scalar-valued differentiable function
$V:\real^n \to \real$ to be a \emph{column} vector.
The \textit{Jacobian} $\partial \cf(\bu)/\partial \bu$ of a vector-valued differentiable function $\cf:\real^m \to \real^n$ is an $n \times m$ matrix-valued function; when $n=1$, the Jacobian corresponds to the \emph{transpose} of the gradient.
The Kronecker product of two matrices $\bA \in \real^{p \times q}$ and $\bB \in \real^{s \times t}$ is the $ps \times qt$ block matrix
\begin{align*}
    \bA \otimes \bB \coloneqq \begin{bmatrix} a_{11}\bB & \cdots & a_{1q}\bB \\
                \vdots    & \ddots & \vdots    \\
                a_{p1}\bB & \cdots & a_{pq}\bB
                              \end{bmatrix},
\end{align*}
where $a_{ij}$ denotes the $(i,j)$th entry of $\bA$.
We write repeated Kronecker products compactly as
\begin{equation*}
    \kronF{\bx}{k} \coloneqq \underbrace{\bx \otimes \dots \otimes \bx}_{k \ \text{times}}
    \in \real^{n^k}.
\end{equation*}

Two common operations that aid with Kronecker product algebra are the $\Vec(\cdot)$ and $\text{reshape}(\cdot)$ operations.
The $\Vec(\cdot)$ operation stacks the columns of a matrix into one tall column vector.
As defined in \cite[pg. 28]{Golub2013}, for $p_1 q_1 = p_2 q_2$, the $\text{reshape}(\cdot)$ operation reshapes a matrix $\bA \in \real^{p_1 \times q_1}$ into a new matrix
$\text{reshape}(\bA,p_2,q_2) \in \real^{p_2 \times q_2}$
such that
$\Vec(\bA) = \Vec(\text{reshape}(\bA,p_2,q_2))$.

The vector $\kronF{\bx}{k}$ exhibits a type of symmetry due to repeated entries in the Kronecker product,
which may be used to define a symmetrization operation on vectors.

\begin{definition}\label{def:sym}
    Let $\bv_k \in \real^{n^k}$ be a vector corresponding to a $k$-way tensor $\bV_k \in \real^{n \times n \times \dots \times n}$, such that $\Vec(\bV_k) = \bv_k$.
    We say that the vector $\bv_k$ is \emph{symmetric} if $\bV_k$ is a symmetric tensor, i.e., for any permutation $\pi$ of the indices $\{1,2,\dots,k\}$, the following holds:
    \begin{equation*}
        \bV_k(i_1,i_2,\dots,i_k) = \bV_k(i_{\pi(1)},i_{\pi(2)},\dots,i_{\pi(k)}), \quad \forall i_1,i_2,\dots,i_k \in \{1,2,\dots,n\},
    \end{equation*}
    where $\bV_k(i_1,i_2,\dots,i_k)$ denotes the entry of $\bV_k$ corresponding to the multi-index $(i_1,i_2,\dots,i_k)$.
\end{definition}
\Cref{def:sym} can be used to define a vector symmetrization operation.
For any given vector $\bv_k \in \real^{n^k}$, we define
$\rm{symmetrize}(\bv_k)$
to be the vector $\tilde{\bv}_k \in \real^{n^k}$ such that $\tilde{\bv}_k = \Vec(\tilde\bV_k)$, where $\tilde\bV_k$ is the symmetric $k$-way tensor obtained by reshaping $\bv_k$ as a $k$-way tensor and averaging over all permutations of its indices.

Finally, we define the generalized $k$-way Lyapunov matrix, which plays an important role in the Kronecker product formulation of Al'brekht's method.
For $\bA \in \real^{p \times q}$ and $\bE \in \real^{q \times q}$, the generalized
\textit{$k$-way Lyapunov matrix} is
\begin{equation*}
    \cL^\bE_k(\bA) \coloneqq \sum_{i=1}^k\underbrace{\bE \otimes \bA \otimes \bE \otimes \dots \otimes \bE}_{\text{$k$ factors, $\bA$ in the $i$th position}} \quad \in \real^{p^k \times p^{k-1}q}.
\end{equation*}

\section{Taylor series approximations to optimal control problems} \label{ch3:sec:PPR-Poly}
In this section, we describe the method by which a Taylor approximation of the value function and feedback law can be computed for the infinite horizon nonlinear optimal control problem.
In \cref{ch3:sec:theory} we review the relevant theory describing the way by which the solution to the optimal control problem, which can be obtained as the solution to the Hamilton-Jacobi-Bellman equation, can be approximated by a Taylor series expansion.
This approach is commonly known as Al'brekht's method.
In \cref{ch3:sec:full-Albrekht}, we describe our specific computational approach for implementing Al'brekht's method based on the Kronecker product, and we expand on our previous work \cite{Corbin2024e} by (i) addressing generalized dynamics with non-identity mass matrices and (ii) allowing for polynomial state dependence in the control penalty portion of the cost.
Subsequently, we describe the computational complexity and memory requirements of the full-order approach in \cref{ch3:sec:complexity}, which motivates the dimensionality reduction approach proposed in \cref{ch3:sec:reduced-Albrekht}.

\subsection{The theory of Al'brekht's method}\label{ch3:sec:theory}
We are concerned with the optimal regulation problem for nonlinear control-affine systems
\begin{align}
    \dot{\bx}(t)
    = \bf(\bx(t))  + \bg(\bx(t)) \bu(t), \qquad \bx(0)  = \bx_0, \label{ch3:eq:FOM-NL}
\end{align}
where $t$ is time,
$\bx(t) \in \real^n$ is the state,
$\bu(t) \in \real^m$ is the input,
$\bf:\real^n \to \real^n$ is the nonlinear drift, and
$\bg:\real^n \to \real^{n\times m}$ is the nonlinear input map.
We assume $\bf(\bzero) = \bzero$, and that there exists an equilibrium point at the origin $\bx=\bzero$ which we seek to stabilize.
The optimal control problem is to find the solution
\begin{align}
    \bu^*(\cdot) = {\arg \min}_{\bu(t) \in L_2(0,\infty)}
    \left\{
    J(\bx_0,\bu)  \coloneqq  \frac{1}{2}  \int_{0}^{\infty} \bx^\top \bm{Q}(\bx) \bx + \bu^\top \bm{R}(\bx) \bu \,\rd t
    \right\},
    \label{ch3:eq:opt-contr-problem}
\end{align}
where $\bJ(\cdot,\cdot)$ is the scalar performance index.
The functions $\bf(\bx)$, $\bg(\bx)$, $\bm{Q}(\bx) \succeq 0$, and $\bm{R}(\bx) \succ 0$ are all assumed to be analytic, and the variables $\bx$ in the integrand of the cost are the solution $\bx(t)$ subject to the initial condition $\bx(0) = \bx_0$ and the control input $\bu(t)$.

A foundational result in control theory is that the solution to the optimal control problem \cref{ch3:eq:opt-contr-problem} can be obtained by solving the Hamilton-Jacobi-Bellman partial differential equations (HJB PDEs) for the \textit{value function} $V(\bx) \in \real_{\geq 0}$,
where $V(\bx) = J(\bx_0,\bu^*)$, and the optimal state feedback law $\bu(\bx) \in \real^m$:
\begin{align}
    0      & = \frac{\partial V^\top(\bx)}{\partial \bx}  \bf(\bx) + \frac{\partial V^\top(\bx)}{\partial \bx}\bg(\bx) \bu(\bx) + \frac{1}{2} \bx^\top \bm{Q}(\bx) \bx  + \frac{1}{2} \bu(\bx)^\top \bm{R}(\bx) \bu(\bx), \label{ch3:eq:HJB-PDE} \\
    \bzero & =\bm{R}(\bx) \bu(\bx) +  \bg^\top(\bx)\frac{\partial V(\bx)}{\partial \bx} . \label{ch3:eq:HJB-PDE-u}
\end{align}
The control-affine dynamics, the structure of the cost function being quadratic with respect to the input, and the lack of control constraints are key to obtaining this result; for more details on the derivation of the HJB PDE, we refer to \citep{Kalman1963}.
While solving the HJB PDEs \labelcref{ch3:eq:HJB-PDE,ch3:eq:HJB-PDE-u} analytically is not feasible in general,
there exist special cases where additional assumptions can be leveraged to obtain solution approximations.
A famous example is the method of Al'brekht \cite{Albrekht1961}, which gives a formal procedure for approximating HJB PDE solutions via power series expansions under particular structural and local stabilizability and detectability assumptions.
\begin{theorem}[Al'brekht's method \citep{Albrekht1961,Lukes1969}]\label{ch3:thm:albrekht}
    Assume that the functions $\bf(\bx)$ and $\bg(\bx)$ in the dynamics \cref{ch3:eq:FOM-NL}, along with the functions $\bm{Q}(\bx)$ and $\bm{R}(\bx)$ in the cost function \cref{ch3:eq:opt-contr-problem}, are analytic about $\bx=\bzero$.
    Define the matrices $\bA = \partial \bf/\partial \bx(\bzero)$, $\bB = \bg(\bzero)$, $\bQ_2 = \bm{Q}(\bzero)$, and $\bR_0 = \bm{R}_0(\bzero)$.
    Further assume that the pair $(\bA, \bB)$ is stabilizable and the pair $(\bA, \bQ_2)$ is detectable.
    These conditions guarantee that the associated linear-quadratic regulator (LQR) problem admits a unique stabilizing solution and the corresponding closed-loop linearization is Hurwitz.
    Then the following statements hold for the nonlinear optimal control problem \eqref{ch3:eq:opt-contr-problem}:
    \begin{enumerate}
        \item the value function and the optimal feedback law are
              analytic about $\bx=\bzero$ and can be locally approximated by polynomials;
        \item the lowest degree polynomial term in the value function is degree~$2$, whose polynomial coefficient
              is given by the unique stabilizing solution to the algebraic Riccati equation (ARE) associated with the LQR problem on the linearized dynamics;
        \item the remaining higher degree polynomial coefficients of the value function are solutions to
              linear algebraic equations that are entirely determined by the already-computed polynomial coefficients.
    \end{enumerate}
\end{theorem}
Al'brekht's results showed existence and uniqueness of
power series expansions for the value function and the optimal feedback law in the form
\begin{align*}
    V(\bx)   & = {V}^{(2)}(\bx) + {V}^{(3)}(\bx) + \dots + {V}^{(d)}(\bx) + \dots         \\
    \bu(\bx) & = {\bu}^{(1)}(\bx) + {\bu}^{(2)}(\bx) + \dots + {\bu}^{(d-1)}(\bx) + \dots
\end{align*}
where $^{(1)}$ denotes a linear term, $^{(2)}$ denotes a quadratic term, and so on.
However, while he abstractly proved that
algebraic constraints for these polynomial expansion coefficients could be derived from the HJB PDEs,
his presentation was theoretical in nature.
He therefore did not provide explicit equations to solve for the coefficients, nor did he analyze numerical scalability or computational complexity.
Likewise, his work did not focus on scalable implementation details, such as exploiting sparsity or handling the generalized case of a non-identity mass matrix.
In this work, we adopt the Kronecker product representation to write polynomials explicitly and derive precise formulas for the value function and optimal feedback law.
Moreover, we generalize the method to work with an invertible mass matrix $\bE$ in pursuit of efficient computations that leverage sparsity.

\subsection{The solution to Al'brekht's method explicitly in terms of the Kronecker product}\label{ch3:sec:full-Albrekht}
In the interest of deriving explicit formulas for the coefficients of the value function and feedback law expansions,
consider
a nonlinear control-affine dynamical system with polynomial structure
in generalized form:
\begin{align}\label{ch3:eq:FOM-Poly}
    \bE\dot{\bx} & = \underbrace{\bA \bx + \sum_{p=2}^\infty \bF_p \kronF{\bx}{p}}_{\bf(\bx)} + \underbrace{\left(\sum_{p=1}^\infty \bG_p \left(\kronF{\bx}{p} \otimes \bI_m\right) + \bB\right)}_{\bg(\bx)} \bu,
\end{align}
where $\bA \in \real^{n\times n}$, $\bF_p \in \real^{n\times n^p}$, $\bB \in \real^{n\times m}$, and $\bG_p \in \real^{n \times mn^p}$.
Any model for which $\bf(\bx)$ and $\bg(\bx)$ are analytic can be brought into this form locally via Taylor expansion, and many models already feature polynomial nonlinearities.
The mass matrix $\bE \in \real^{n\times n}$ is assumed to be nonsingular, so in theory it is possible to invert $\bE$ to obtain a standard control-affine model, but for computational purposes it is advantageous to consider the generalized form for high dimensional models.
The cost function is written similarly
in terms of the Kronecker product as
\begin{align}
    J(\bx_0,\bu)  \coloneqq  \frac{1}{2}  \int_{0}^{\infty}
    \bx^\top \bQ_2 \bx +   \bu^\top \bR_0 \bu + \sum_{p=3}^{\infty} \bq_p^\top \kronF{\bx}{p}
    +  \sum_{p=1}^{\infty}
        {\kronF{\bx}{p}}^\top \br_p^\top \kronF{\bu}{2}
    \,\rd t, \label{ch3:eq:performanceIndex-Poly}
\end{align}
where $\bq_p \in \real^{n^p}$ and $\br_p \in \real^{m^2 \times n^p}$, and
we assume $\bQ_2 \succeq \bzero$ and $\bR_0 \succ \bzero$.

Following the result of \cref{ch3:thm:albrekht}, the value function and optimal feedback law are analytic, so they both yield locally convergent Taylor expansions.
The feedback law can be expanded as
\begin{align}
    \bu(\bx) & = \bK_1 \bx + \bK_2 \kronF{\bx}{2} + \bK_3 \kronF{\bx}{3} + \dots
    = \sum_{i=1}^{\infty} \bK_i \kronF{\bx}{i} \label{ch3:eq:Ki-coeffs}
\end{align}
with the coefficients $\bK_i \in \real^{m \times n^i}$ for $i=1,2,3,\dots$.
In order to perform computations without explicitly inverting the mass matrix $\bE$, it is preferable to compute the value function in a transformed coordinate $\bz \coloneqq \bE \bx$, hence we expand the value function as
\begin{align}
    \overline V(\bz) & = \frac{1}{2} \bz^\top \overline\bV_2 \bz +
    \frac{1}{2}\bar\bv_3^\top \kronF{\bz}{3} +
    \frac{1}{2}\bar\bv_4^\top \kronF{\bz}{4} + \dots
    =  \frac{1}{2} \sum_{i=2}^\infty \bar\bv_i^\top \kronF{\bz}{i}, \label{ch3:eq:vi-coeffs}
\end{align}
with the coefficients $\bar\bv_i \in \real^{n^i}$ for $i=2,3,4, \dots$.
For convenience, we define $\bar\bv_2 \coloneqq \Vec[\overline\bV_2] \in \real^{n^2}$.
Without loss of generality, the value function coefficients may be assumed to be symmetric; this allows us to write the gradient of the value function as
\begin{align}
    \frac{\partial^\top \overline V(\bz)}{\partial \bz}
     & = \frac{1}{2}\left (2\bar\bv_2^\top (\bI_n \otimes \bz)  + 3\bar\bv_3^\top (\bI_n \otimes \bz \otimes \bz)
    + \cdots \right )
    = \frac{1}{2}\sum_{i=2}^{\infty} i \bar\bv_i^\top \left(\bI_n \otimes \kronF{\bz}{i-1}\right).\label{ch3:eq:Past-value-deriv-compact}
\end{align}

\begin{remark}\label{ch3:rem:vbar-v-relation}
    The value function in the original coordinates is related to the transformed value function by the relation $V(\bx) \coloneqq \overline V(\bz)\Big|_{\bz=\bE\bx}$.
    For the purposes of computing the optimal feedback law, it is possible to use the transformed value function $\overline V(\bz)$ without explicitly forming the value function $V(\bx)$ in the original coordinates.
\end{remark}

In practice, we compute a truncated degree~$d$ Taylor approximation to the value function, which corresponds to a degree~$d-1$ approximation to the feedback law.
The next theorem provides our first main result, which is the explicit equations for
the feedback law coefficients $\bK_i$ for $i=1, 2, \dots, d-1$ in Kronecker product form.

\begin{theorem}[Feedback law polynomial coefficients]\label{ch3:thm:kiPoly}
    Consider the special case of the optimal control problem \cref{ch3:eq:FOM-Poly,ch3:eq:opt-contr-problem} given by the polynomial performance index \cref{ch3:eq:performanceIndex-Poly} and polynomial dynamics \cref{ch3:eq:FOM-Poly}.
    Assume that
    $\bQ_2 \succeq \bzero$, $\bR_0 \succ \bzero$,
    $\bE$ is nonsingular,
    and assume that the the pair $(\bE^{-1}\bA,\bE^{-1}\bB)$ is stabilizable and $(\bE^{-1}\bA,\bQ)$ is detectable.
    Let the optimal feedback law $\bu(\bx)$ from~\cref{ch3:eq:Ki-coeffs} be expanded and truncated to $d-1$ terms, with the coefficients $\bK_i \in \real^{m \times n^i}$ for $i=1,2,\dots,d-1$.
    Then $\overline\bV_2$ and $\bK_1$ are the unique stabilizing solution to the generalized algebraic Riccati equation (GARE) and the linear gain matrix, respectively, associated with the LQR problem:
    \begin{align}
         & \bA^\top  \overline\bV_2 \bE +   \bE^\top  \overline\bV_2 \bA- \bE^\top  \overline\bV_2 \bB \bR_0^{-1} \bB^\top \overline\bV_2 \bE
        + \bQ_2 = \bzero, \label{ch3:eq:ARE}                                                                                                  \\
         & \bK_1      = - \bR_0^{-1} \bB^\top \overline\bV_2 \bE.\label{ch3:eq:K1}
    \end{align}
    For $3 \leq k \leq d$, the linear systems
    \begin{equation}
        \begin{split}
            \cL^\bE_{k}(\bA + \bB\bK_1 )^\top \bar{\bv}_k & =
            -\rm{symmetrize}\left(\bb\right)  ,\label{ch3:eq:value-function-linear-system} \\
            \bb                                           & \coloneqq
            \sum_{\mathclap{i+p=k+1}} \cL^\bE_{i} (\bF_p)^\top \bv_i
            + \sum_{\mathclap{i+p+q=k+1}} i
            \Vec\left[\bK_q^\top\cG\cV^\bE_{p,i}\right]
            +  \bq_k
            + \sum_{\mathclap{i+p+q=k}} \Vec\left[\br_i^\top \left(\bK_p \otimes \bK_q \right)\right] ,
        \end{split}
    \end{equation}
    where $\cG\cV^\bE_{p,i} = {\text{reshape}\left(\bG_p^\top\overline\bV_i \kronF{\bE}{i-1},m,n^{k-q}\right)}$,
    are uniquely solvable, and their solutions $\bar{\bv}_k \in \real^{n^k}$ are used to determine the gain coefficients
    $\bK_{k-1}$ for $3 \leq k \leq d$ as
    \begin{align}
        \bK_{k-1} & =  \bR_0^{-1}  \left[- \sum_{\mathclap{\substack{p+q=k-1 \\q\neq0}}}  \bR_q^\top \left(\bK_p \otimes \bI_{n^q}\right)   -\sum_{\mathclap{i+p=k}} \frac{1}{2} i
            \cG\cV^\bE_{p,i}
            \right]. \label{ch3:eq:control-gain-linear-system}
    \end{align}
\end{theorem}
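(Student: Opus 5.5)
We will substitute the polynomial ansatz into the HJB equations \cref{ch3:eq:HJB-PDE,ch3:eq:HJB-PDE-u} and match coefficients degree by degree, after rewriting everything in terms of the transformed value function $\overline V(\bz)$ with $\bz=\bE\bx$. Since $\bE$ is nonsingular, the generalized system \cref{ch3:eq:FOM-Poly} is equivalent to the standard control-affine system with drift $\bE^{-1}\bf(\bx)$ and input map $\bE^{-1}\bg(\bx)$. The hypotheses on $(\bE^{-1}\bA,\bE^{-1}\bB)$ and $(\bE^{-1}\bA,\bQ_2)$ are exactly those of \cref{ch3:thm:albrekht}, so analyticity and uniqueness of the expansions \cref{ch3:eq:Ki-coeffs,ch3:eq:vi-coeffs} are inherited; it remains to derive the explicit equations. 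By the chain rule, $\partial V/\partial\bx = \bE^\top \partial\overline V/\partial\bz$. Hence $\frac{\partial V^\top}{\partial \bx}\bE^{-1}\bf(\bx) = \frac{\partial \overline V^\top}{\partial \bz}\bf(\bx)$, and the mass matrix disappears from the drift term. It reappears only through $\bz^{\otimes(i-1)}=\kronF{\bE}{i-1}\kronF{\bx}{i-1}$ in the gradient \cref{ch3:eq:Past-value-deriv-compact}, and this is what produces the factors $\bE$ in $\cL^\bE_k$ and in $\cG\cV^\bE_{p,i}$.

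First we treat the feedback equation \cref{ch3:eq:HJB-PDE-u}. Rather than inverting $\bm R(\bx)$, we keep it expanded as $\bR_0+\sum_q \bR_q(\cdot)$, where the $\bR_q$ are the reshapes of $\br_q$. We write $\bm R(\bx)\bu(\bx)$ as $\bR_0\bu+\sum_q \bR_q^\top(\bu\otimes\kronF{\bx}{q})$ and use $\bK_p\kronF{\bx}{p}\otimes\kronF{\bx}{q}=(\bK_p\otimes\bI_{n^q})\kronF{\bx}{p+q}$. Then $\bg^\top(\bx)\bE^{-\top}\partial V/\partial\bx=\bg^\top(\bx)\partial\overline V/\partial\bz$ with $\bg$ expanded via the $\bG_p$; the product $\bG_p^\top(\kronF{\bx}{p}\otimes\bI)$ applied to $\overline\bV_i\kronF{\bE}{i-1}\kronF{\bx}{i-1}$ reshapes into $\cG\cV^\bE_{p,i}$ acting on $\kronF{\bx}{p+i-1}$. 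Collecting the terms of degree $k-1$ and isolating $\bR_0\bK_{k-1}$ gives \cref{ch3:eq:control-gain-linear-system}. The degree-1 term gives \cref{ch3:eq:K1}, because $\bB^\top\overline\bV_2\bz=\bB^\top\overline\bV_2\bE\bx$.

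Next we substitute into \cref{ch3:eq:HJB-PDE}. We use \cref{ch3:eq:HJB-PDE-u} to replace $\tfrac12\bu^\top\bm R\bu+\frac{\partial V^\top}{\partial\bx}\bE^{-1}\bg\bu$ by $-\tfrac12\bu^\top\bm R\bu$ and the gradient term. At degree 2 this yields \cref{ch3:eq:ARE}: writing $\bx^\top\bE^\top\overline\bV_2\bA\bx$, symmetrizing, and inserting \cref{ch3:eq:K1}, the coefficient of $\kronF{\bx}{2}$ is the GARE. Uniqueness of the stabilizing solution follows from the standard ARE for $\bE^{-1}\bA$ under the congruence $\bV_2=\bE^\top\overline\bV_2\bE$. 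At degree $k\ge3$ we sort the contributions to the coefficient of $\kronF{\bx}{k}$:
\begin{itemize}
\item the unknown term $\frac{k}{2}\bar\bv_k^\top(\bI\otimes\kronF{\bE}{k-1}\kronF{\bx}{k-1})$ paired with $(\bA+\bB\bK_1)\bx$; the $\bB\bK_1$ piece comes from the $\bg\bu$ term, with $\bu^{(1)}$ linear;
\item drift terms $\bF_p$ paired with $\bar\bv_i$ for $i+p=k+1$;
\item input terms $\bG_p$ and $\bK_q$ with $i+p+q=k+1$;
\item the cost terms $\bq_k$ and $\br_i$ with $\bK_p\otimes\bK_q$, for $i+p+q=k$.
\end{itemize}
The contributions $\bB\bK_{k-1}$ and $\bK_{k-1}^\top\bR_0$ cancel through the feedback relation, which is why $\bar\bv_k$ is the only unknown. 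Because $\kronF{\bx}{k}$ only sees the symmetric part, we transpose each term into the form $w^\top\kronF{\bx}{k}$ and apply symmetrize. Using that $\bar\bv_k$ is symmetric, $k\,\bar\bv_k^\top(\bI\otimes\kronF{\bE}{k-1})(\bA_c\otimes\bI)$ symmetrizes to $\bar\bv_k^\top\cL^\bE_k(\bA_c)$, with $\bA_c=\bA+\bB\bK_1$. Transposing gives \cref{ch3:eq:value-function-linear-system}, with the factors of $\tfrac12$ absorbed consistently.

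For unique solvability, $\cL^\bE_k(\bA_c)=\kronF{\bE}{k}\,\cL^{\bI}_k(\bE^{-1}\bA_c)$. Here $\bE$ is invertible, and the eigenvalues of the $k$-way Lyapunov matrix are sums of $k$ eigenvalues of $\bE^{-1}\bA_c$. These all have negative real part because the closed-loop LQR matrix is Hurwitz, so the system matrix is nonsingular. We expect the main obstacle to be the bookkeeping in the input-coupling terms: the reshape defining $\cG\cV^\bE_{p,i}$, and checking that the $\bR_q$ and $\br_i$ cross terms each appear with the correct multiplicity and factor $\tfrac12$ after the feedback relation is substituted. We will handle this by verifying each term's index ranges at a low degree ($k=3$) and then inducting on the pattern.
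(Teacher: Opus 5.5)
Your proposal is correct and follows essentially the same route as the paper's proof in the appendix. You insert the Kronecker expansions into both HJB equations with $\bz=\bE\bx$, obtain $\bK_1$ and the GARE from the degree-1 and degree-2 terms, collect the degree-$k$ terms into the generalized $k$-way Lyapunov form with symmetrization, and read off $\bK_{k-1}$ from the degree-$(k-1)$ terms of the feedback equation. You are more explicit than the paper on two points it glosses over: the cancellation of the $\bK_{k-1}$ contributions through $\bR_0\bK_1=-\bB^\top\overline\bV_2\bE$, and invertibility via $\cL^\bE_k(\bA_c)=\kronF{\bE}{k}\,\cL^{\bI}_k(\bE^{-1}\bA_c)$, where the paper simply appeals to the standard-form result after multiplying by $\bE^{-1}$.
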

The proof of \cref{ch3:thm:kiPoly} is somewhat lengthy, and the interested reader can find it in \labelcref{ch3:sec:thm-proof}.
Furthermore, the existence and uniqueness of a stabilizing solution to the GARE \cref{ch3:eq:ARE} are guaranteed by the stabilizability and detectability assumptions on the linearization of the dynamics at the origin, as discussed in \labelcref{sec:are-solution}.
Similarly, the invertibility of $\bA + \bB \bK_1$ (guaranteed since it is Hurwitz) translates to invertibility of the larger $k$-way Lyapunov matrix $\cL^\bE_{k}(\bA + \bB\bK_1 )$ to guarantee uniqueness and existence of solutions to \eqref{ch3:eq:value-function-linear-system} \cite{Grasedyck2004}, a property which traces back even to Al'brekht's original work \cite{Albrekht1961}.
\\

For the purposes of computing the optimal feedback coefficients $\bK_{k-1}$, it is sufficient to deal with the transformed value function coefficients $\bar \bv_k$; the following corollary describes how to obtain the value function coefficients in the original coordinates if they are are needed explicitly.
\begin{corollary}[Value function polynomial coefficients]\label{ch3:cor:viPoly}
    The value function polynomial coefficients in the original coordinates $\bv_k$ are given in terms of the value function coefficients in the transformed coordinates $\bar \bv_k$ as
    \begin{align*}
        \bv_k = {\kronF{\bE}{k}}^\top \bar \bv_k.
    \end{align*}
\end{corollary}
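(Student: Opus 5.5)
The plan is to work directly from the relation in \cref{ch3:rem:vbar-v-relation}, $V(\bx) = \overline V(\bz)\big|_{\bz=\bE\bx}$, and match homogeneous terms degree by degree. The original-coordinate value function has the expansion $V(\bx) = \frac{1}{2}\sum_{k\ge 2} \bv_k^\top \kronF{\bx}{k}$, and the transformed one is given by \cref{ch3:eq:vi-coeffs}. Substituting $\bz = \bE\bx$ into each term of \cref{ch3:eq:vi-coeffs} and applying the mixed-product property of the Kronecker product gives
\begin{equation*}
    \kronF{\bz}{k} = \kronF{(\bE\bx)}{k} = \kronF{\bE}{k}\kronF{\bx}{k}.
\end{equation*}
This identity follows by induction on $k$ from $(\bE\bx)\otimes(\bE\bx) = (\bE\otimes\bE)(\bx\otimes\bx)$. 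It follows that
\begin{equation*}
    \tfrac{1}{2}\bar\bv_k^\top \kronF{\bz}{k} = \tfrac{1}{2}\left({\kronF{\bE}{k}}^\top \bar\bv_k\right)^\top \kronF{\bx}{k},
\end{equation*}
so each degree-$k$ homogeneous piece of $\overline V(\bE\bx)$ has coefficient vector ${\kronF{\bE}{k}}^\top \bar\bv_k$.

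Next I will identify this vector with $\bv_k$. Since $\bE$ is linear, substituting $\bz=\bE\bx$ maps degree-$k$ homogeneous polynomials to degree-$k$ homogeneous polynomials, so the two series can be compared degree by degree. The one subtlety is that a homogeneous polynomial does not determine its Kronecker coefficient vector uniquely unless that vector is symmetric. I will handle this by checking that symmetry is preserved. If $\bar\bv_k$ is symmetric in the sense of \cref{def:sym}, which the paper assumes without loss of generality (and which \cref{ch3:eq:value-function-linear-system} enforces through the symmetrize operation), then ${\kronF{\bE}{k}}^\top\bar\bv_k$ is symmetric as well. In tensor terms it equals $\bar\bV_k$ contracted with $\bE$ in every mode, and such a multilinear transformation commutes with any permutation of the modes.

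Given that symmetry is preserved, uniqueness of symmetric coefficients for homogeneous polynomials (equivalently, uniqueness of the Taylor expansion of the analytic function $V$ from \cref{ch3:thm:albrekht}) gives $\bv_k = {\kronF{\bE}{k}}^\top\bar\bv_k$. The main obstacle is only this bookkeeping of symmetry versus non-unique representations. Everything else is a single application of the mixed-product rule.
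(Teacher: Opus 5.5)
Your proposal is correct and follows the paper's route: substitute $\bz=\bE\bx$ into each homogeneous term, use $\kronF{(\bE\bx)}{k}=\kronF{\bE}{k}\kronF{\bx}{k}$, and read off the coefficient. Your check that ${\kronF{\bE}{k}}^\top$ preserves symmetry, so that the symmetric coefficient vector is unique, spells out what the paper leaves implicit in the word ``rearranging.''
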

\begin{proof}
    As mentioned in \cref{ch3:rem:vbar-v-relation},
    $V(\bx) \coloneqq \overline V(\bz)$ at $\bz=\bE\bx$, so
    for each coefficient we have $\bv_k^\top \kronF{\bx}{k} = \bar\bv_k^\top \kronF{\bz}{k}$.
    Inserting $\bz=\bE\bx$ and rearranging proves the result.
\end{proof}

The solution procedure described by \cref{ch3:thm:kiPoly} is summarized in \cref{ch3:alg:full-PPR}.
\begin{algorithm}[!ht]
    \caption{Taylor approximation to the optimal feedback law $\bu(\bx)$}\label{ch3:alg:full-PPR}
    \begin{algorithmic}[1]
        \Require System matrices $\bE$, $\bA$, $\bB$, $\{\bF_p\}_{p=2}^{d-1}$, $\{\bG_p\}_{p=1}^{d-2}$ from \cref{ch3:eq:FOM-Poly}; cost function weights $\{\bq_p\}_{p=2}^d$, $\{\br_p\}_{p=0}^{d-2}$ from \cref{ch3:eq:performanceIndex-Poly}; desired value function approximation degree $d$
        \Ensure Polynomial gain coefficients $\{\bK_i\}_{i=1}^{d-1}$
        \State Solve the ARE \cref{ch3:eq:ARE} for $\overline\bV_2$.
        \State Form the linear gain coefficient $\bK_1$ according to \cref{ch3:eq:K1}.
        \For{$k=3$ to $d$}
        \State Solve \cref{ch3:eq:value-function-linear-system} for $\bar\bv_k$.
        \State Form the next nonlinear gain coefficient $\bK_{k-1}$ according to \cref{ch3:eq:control-gain-linear-system}.
        \EndFor
    \end{algorithmic}
\end{algorithm}

\subsection{Generalized Bartels-Stewart algorithm to efficiently solve structured linear systems}\label{ch3:sec:linear-solver}
The linear systems \labelcref{ch3:eq:value-function-linear-system} represent the main computational hurdle for Al'brekht-based approaches.
These linear systems feature special Lyapunov-type structure; for $k=2$ they are equivalent to matrix Lyapunov equations, whereas for arbitrary $k$ they can be considered tensor Lyapunov equations.
Solving these linear systems for $\bar\bv_k$ using standard methods such as Gaussian Elimination is prohibitively expensive, with a cost of $O(n^{3k})$ due to the vector $\bar\bv_k$ having $n^k$ unknowns.
Instead, we leverage structured solvers that take advantage of the $k$-way Lyapunov matrix structure of the coefficient matrix $\cL_k^\bE(\bA+\bB\bK_1)^\top$.
For the case when $\bE = \bI$, existing solvers can be used \cite{Chen2019,Borggaard2021}.
Both of these solvers are Bartels-Stewart type algorithms: the quantity $(\bA+\bB\bK_1)^\top$ is first triangularized using a Schur decomposition: $(\bA+\bB\bK_1)^\top = \bQ \bT\bQ^*$.
The unitary transformation $\bQ$ can be used to then triangularize the $k$-way Lyapunov matrix $\cL_k(\bA+\bB\bK_1)^\top$, leading to a triangular system with coefficient matrix $\cL_k(\bT)$.
The method in \citep{Borggaard2021} solves the triangular system via block back substitution by solving $n^{k-1}$ smaller triangular systems of dimension $n$, leading to an overall cost of $O(n^{k+1})$, whereas \citep{Chen2019} uses a recursive blocked approach
that is more efficient but overall still scales as $O(n^{k+1})$.

We have developed an additional \textit{generalized} Bartels-Stewart type solver for the case where $\bE$ is nonsingular but not necessarily identity, as is often the case with models arising from semi-discretization techniques such as the finite element method.
In this case, the pair
\begin{equation*}
    \left({(\bA + \bB\bK_1)}^\top, \bE^\top\right)
\end{equation*}
is decomposed using the QZ decomposition \citep{Moler1973}, also known as the generalized Schur decomposition.
This again leads to a triangular linear system that can be solved via back substitution with $O(n^{k+1})$ operations.
However, a more complicated back substitution procedure is necessary, similar to the approach described in \citep{Penzl1998} for generalized Lyapunov matrix equations.

\subsection{Computational complexity and memory requirements}\label{ch3:sec:complexity}
In this section, we analyze the computational complexity and associated memory requirements of the
solution procedure described in \cref{ch3:thm:kiPoly} and \cref{ch3:alg:full-PPR}.
\begin{proposition}
    The computational complexity of \cref{ch3:alg:full-PPR} is $O(n^{d+1})$, and the memory complexity is $O(n^d)$.
\end{proposition}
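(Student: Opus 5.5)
The plan is to account for the cost of each line of \cref{ch3:alg:full-PPR} separately, treating $m$ and $d$ as fixed constants and $n$ as the growing parameter. The total cost is then the maximum over the steps, which will turn out to be the final loop iteration $k=d$.

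\textbf{Quadratic steps.} Solving the GARE \cref{ch3:eq:ARE} with a dense (generalized) Schur or Newton--Kleinman method costs $O(n^3)$ operations and $O(n^2)$ storage. Forming $\bK_1$ via \cref{ch3:eq:K1} costs $O(n^2 m)$. Since $d\ge 2$, both bounds lie within $O(n^{d+1})$ and $O(n^d)$.

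\textbf{Loop iteration $k$.} Each iteration has three parts.

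First, the linear system \cref{ch3:eq:value-function-linear-system} is solved with the (generalized) Bartels--Stewart solver of \cref{ch3:sec:linear-solver}. One QZ/Schur factorization costs $O(n^3)$ and can be computed once and reused. Transforming the right-hand side and the solution by $\kronF{\bQ}{k}$ (and the analogous factor from the $\bZ$ side) is done mode by mode, never forming the Kronecker matrix. This means $k$ products of an $n\times n$ matrix with an $n\times n^{k-1}$ reshaping, costing $O(kn^{k+1})$. The triangular back substitution also costs $O(n^{k+1})$, as stated there.

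Second, the right-hand side $\bb$ must be assembled. This is the step I expect to be the main obstacle, because a naive evaluation of terms like $\cL^\bE_i(\bF_p)^\top\bv_i$ or $\kronF{\bE}{i-1}$ would form matrices of size $n^{k}\times n^{k}$. I would argue that every term is a product of a Kronecker-structured operator with a vector of length $n^i$, $i\le k-1$. Each such product can be evaluated by the standard identity $(\bX\otimes\bY)\Vec[\bZ]=\Vec[\bY\bZ\bX^\top]$, applied one factor at a time. An $n\times n$ factor acting on a tensor with $n^{k-1}$ total entries costs $O(n^{k})$. An $\bF_p^\top$ factor (size $n^p\times n$) maps a tensor of size $n^{i}$ to one of size $n^{k}$ at cost $O(n^{i-1}\cdot n\cdot n^{p}) = O(n^{k})$ when $i+p=k+1$. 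The same holds for the $\bG_p$ terms, the $\br_i^\top(\bK_p\otimes\bK_q)$ terms, and the symmetrization. The latter is $k!$ permutations of an $n^k$ array, which is $O(n^k)$ for fixed $k$. There are only $O(k^2)$ summands, so assembling $\bb$ costs at most $O(n^{k+1})$.

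Third, the gain update \cref{ch3:eq:control-gain-linear-system} is formed. $\cG\cV^\bE_{p,i}$ is obtained the same way at $O(n^{k})$ per term, since $\bK_{k-1}$ has $mn^{k-1}$ entries.

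\textbf{Totals.} Summing over $k=3,\dots,d$ gives a geometric sum dominated by $k=d$, so the computational complexity is $O(n^{d+1})$. For memory, the largest objects stored are $\bar\bv_d$ and $\bb$, each of length $n^d$, together with the Schur/QZ factors of size $O(n^2)$. All earlier $\bar\bv_i$ and $\bK_i$ have $O(n^{i})$ entries with $i<d$, and the Kronecker-structured matrices are never formed explicitly. Hence the memory complexity is $O(n^d)$. The point to state carefully in the write-up is this implicit handling of Kronecker products, since it is exactly what keeps both bounds from degrading to $O(n^{2d})$.
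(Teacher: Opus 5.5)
Your proposal is correct and follows essentially the same route as the paper: a step-by-step flop count with the GARE at $O(n^3)$, the right-hand side and gain assembled through recursive Kronecker--vec identities without ever forming Kronecker matrices, the structured (generalized) Bartels--Stewart solve at $O(n^{k+1})$, a sum over $k$ dominated by $k=d$, and memory dominated by $\bb$ and $\bar\bv_d$ of length $n^d$. One arithmetic slip: the mode-wise application of $\bF_p^\top$ costs $n^{i-1}\cdot n\cdot n^{p}=n^{i+p}=n^{k+1}$, not $O(n^k)$; this is still within the $O(n^{k+1})$ bound you state for assembling $\bb$, so your conclusion is unaffected.
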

\begin{proof}
    The complexity of \cref{ch3:alg:full-PPR} can be assessed using a floating point operation (flop) count.
    Standard solvers for the ARE in line 1 require $O(n^3)$ operations (low-rank solvers are discussed in \cref{ch3:sec:lr-adaptation}).
    Once $\overline\bV_2$ is known, $\bK_1$ can be formed with $O(m^3n)$ operations by solving $n$ linear systems with respect to inverting $\bR_0$.
    Often $\bR_0^{-1}$ is known explicitly, in which case $\bK_1$ can be formed with $O(m^2n)$ operations.
    The operations of forming and appropriately symmetrizing the quantities in lines 4-7 can be performed efficiently by reshaping Kronecker products as matrix multiplication operations.
    The main workhorse is the Kronecker-Vec identity $\left(\bB^\top \otimes \bA \right) \Vec[\bD] = \Vec[\bA\bD\bB]$, as described in \cite[Sec. 1.3.7]{Golub2013}.
    In the case of repeated Kronecker products, the Kronecker-Vec relation can be applied recursively.
    This allows the linear systems for $\bar\bv_k$ and $\bK_{k-1}$ to be formed with $O(n^{k+1})$ complexity.

    Solving the linear system \cref{ch3:eq:value-function-linear-system} for $\bar\bv_k$ in line 6 using standard methods prohibitively expensive, with a cost of $O(n^{3k})$ due to the vector $\bar\bv_k$ having $n^k$ unknowns.
    Instead, as described in \cref{ch3:sec:linear-solver}, we leverage structured Lyapunov solvers that scale as $O(n^{k+1})$.
    Once $\bar\bv_k$ is known, $\bK_{k-1}$ can be formed with $O(m^3n^{k-1})$ operations by solving the $n^{k-1}$ columns as linear systems with respect to inverting $\bR_0$.
    Similar to the case for $\bK_{1}$, which corresponds to $k=2$, when $\bR_0^{-1}$ is known explicitly, $\bK_{k-1}$ can be formed with $O(m^2n^{k-1})$ operations.

    Overall, the complexity of \cref{ch3:alg:full-PPR} for computing a degree $d$ value function approximation and degree $d-1$ approximation of the optimal feedback law is $O(n^{d+1})$.
    The memory requirements for this algorithm are $O(n^d)$, primarily due to the need to store the right-hands-side vector $\bb \in \real^{n^d}$ in \cref{ch3:eq:value-function-linear-system} and the resulting value function coefficient $\bar\bv_k \in \real^{n^d}$
    (the structured $k$-way Lyapunov solvers never form the matrix $\cL_k^E(\bA + \bB\bK_1)^\top$).
    The highest-order gain coefficient $\bK_{d-1}$ requires $O(mn^{d-1})$ storage, so the memory usage is dominated by the value function computations.
\end{proof}

\begin{remark}
    Throughout our complexity analysis, we assume that the polynomial degree $d$ is a small fixed constant (typically $d \leq 6$, with the highest being $d=12$ in one of our examples).
    The implied constant factor in the big-O notation can depend linearly on $d$, but the dominant contributor to the increase in computational complexity with respect to polynomial degree is the exponential factor $n^{d+1}$.
\end{remark}

\section{Improving scalability of Al'brekht's method via higher-order coefficient reduction}\label{ch3:sec:reduced-Albrekht}
The approach in \cref{ch3:sec:PPR-Poly} can be used to compute the Taylor coefficients of the value function and feedback law for models of moderate dimensions, e.g. a degree~4 value function and corresponding degree~3 feedback law can be computed for models of dimension $n=O(100)$, and a degree~3 value function and corresponding degree~2 feedback law can be computed for models of dimension $n=O(1000)$ on a computer with 16 GB of RAM.
For accommodating models with larger state dimensions, we propose a novel dimensionality reduction approach that can reduce the memory requirements of Al'brekht's method.
In \cref{ch3:sec:reduction-projection} we describe the proposed approach for dimensionality reduction to circumvent the memory limitations of the full-order approach.
The modified version of \cref{ch3:alg:full-PPR} featuring the proposed reduction step is then summarized in \cref{ch3:alg:reduced-PPR}, with the computational and memory complexity analysis revisited in \cref{ch3:sec:reduced-complexity}.
We then present a specialized version of the algorithm for the case of sparse, low-rank problems that often arise due to the semi-discretization of PDEs in \cref{ch3:sec:lr-adaptation}, in which \cref{ch3:alg:reduced-PPR} is modified to use low-rank factors $\bV_2 \approx \bZ \bZ^\top$ instead of the full GARE solution, as described in \cref{ch3:alg:lr-PPR}.

\subsection{Dimensionality reduction of the higher-order coefficients by projection}\label{ch3:sec:reduction-projection}
The method proposed in this section involves computing value function and feedback law approximations of the form
\begin{align}
    V(\bx)   & \approx \frac{1}{2} \left(\bx^\top \bV_2 \bx + {\bv^r_3}^\top \kronF{\bx_r}{3} + {\bv^r_4}^\top \kronF{\bx_r}{4}   + \dots\right), \label{ch3:eq:vi-coeffs-reduced} \\
    \bu(\bx) & \approx  \bK_1 \bx + \bK^r_2 \kronF{\bx_r}{2} + \bK^r_3 \kronF{\bx_r}{3}   + \dots, \label{ch3:eq:Ki-coeffs-reduced}
\end{align}
where $\bx_r \in \real^r$ is a (projected) reduced-order approximation of the state.
Compared with \cref{ch3:eq:Ki-coeffs,ch3:eq:vi-coeffs}, in \cref{ch3:eq:vi-coeffs-reduced,ch3:eq:Ki-coeffs-reduced}, the higher-order terms with coefficients $\bv^r_k \in \real^{r^k}$ and $\bK^r_k \in \real^{m \times r^k}$ are instead written in terms of the reduced state $\bx_r$.
Meanwhile, the first terms in the value function and feedback law approximations
are still written in terms of the full state $\bx$.
Their coefficients , $\bV_2 \in \real^{n \times n}$ and $\bK_1 \in \real^{m \times n}$,
are computed by solving the LQR problem on the full-order linearized dynamics, precisely as in
\cref{ch3:eq:ARE,ch3:eq:K1}.
This is a key distinction with other existing papers that propose to use model order reduction for computing HJB solution approximations \cite{Kunisch2004,Alla2017,Breiten2018}, as it permits using the full-order solutions for the first terms in the expansions, which dominate and contribute significantly to local optimality.
Critically, it is the higher-order coefficients whose storage requirements of $O(n^k)$ are obstructive, so the approximations \cref{ch3:eq:vi-coeffs-reduced,ch3:eq:Ki-coeffs-reduced} retain the full-order accuracy in the linear terms while also reducing the storage requirements to $O(n^2 + r^k)$.

The reduced state $\bx_r$ can be computed with any model order reduction method.
Of particular interest are linear approximations of the form  $\bx \approx \bT \bx_r$ with reduction basis $\bT \in \real^{n \times r}$.
We denote by $\bT^\dagger$ the left inverse, $\bT^\dagger \bT = \bI_r$.
Common ways to construct $\bT$ include POD and balanced truncation.
In this work, we propose to use the dominant eigenvectors of the quadratic value function coefficient $\bV_2$ from \cref{ch3:eq:ARE} to construct $\bT$, as it adds negligible additional computational cost due to already being computed during the first step of the algorithm.
Additionally, $\bV_2$ is symmetric positive semidefinite, and its eigenvectors are orthogonal, so the left inverse is known analytically $\bT^\dagger~=~\bT^\top$.

This choice of reduction basis has an intuitive justification.
For a positive semidefinite matrix $\bM$, the quadratic form $m(\bx) = \bx^\top \bM \bx$ defines a paraboloid whose the level sets are ellipses, as depicted in \cref{ch3:fig:paraboloid} for the two dimensional case.
The geometry of these ellipses is determined by the eigenvalue decomposition of the weighting matrix $\bM$: the directions of the semi-axes of the ellipse are given by the eigenvectors, whereas the inverse of the eigenvalues give the lengths of the semi-axes.

\begin{figure}[htb]
    \centering
    \includegraphics[page=1,width=0.25\textwidth]{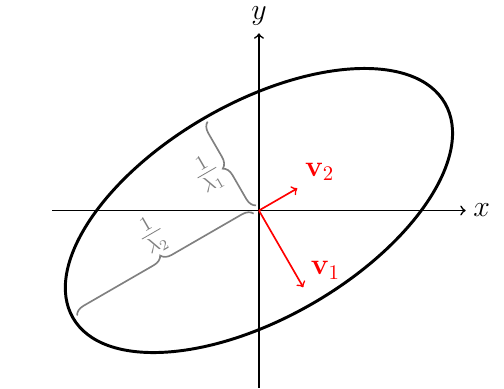}\\
    \includegraphics[page=2,width=0.25\textwidth]{paraboloid.pdf}
    \caption{Depiction of an elliptic paraboloid defined by a quadratic form $m(\bx) = \bx^\top \bM \bx$. Level sets of $q(\bx$) are ellipses; the eigenvectors of $\bM$ determine the directions of the ellipse's semi-axes, and the inverse of the eigenvalues of $\bM$ determine the length of the semi-axes.}
    \label{ch3:fig:paraboloid}
\end{figure}

In the context of optimal control, $V(x) = \frac{1}{2}\bx^\top \bV_2 \bx$ represents a quadratic local approximation to the value function, its gradient being $\partial V(x)/\partial x = \bV_2 \bx$.
Importantly, the optimal control depends on the \textit{gradient} of the value function.
According to the eigenvalue decomposition then,
eigenvectors corresponding to small eigenvalues are directions in which the gradient of the value function is small, and therefore the optimal control signal is small, i.e. little control effort is needed\footnote{This line of reasoning is of course only valid in a neighborhood of the equilibrium point.}.
Conversely, the leading eigenvectors of $\bV_2$ indicate directions in which significant control action is needed.
The hypothesis of this approach is therefore that these are the directions which will benefit the most from the inclusion of additional degrees of approximation.
This reasoning is of course only valid locally, and if sufficiently large reduced-order dimension $r$ is selected.
In practice, the controller performance needs to be evaluated to verify if the desired performance is acquired with the computational resources available.
The LQR controller provides a helpful baseline, and it should always be remembered that if the trajectory or initial condition departs too far from the regulation point, the convergence of the Taylor approximation may be lost.

Finally, we again note that even if a different reduction basis $\bT$ is chosen, our proposed approach permits retaining the full-order accuracy in the linear term and only applying the reduction to the computation of higher-order terms.
It is worth mentioning that it is also possible to apply different amounts of reduction to each degree, or even a nonlinear reduced-order approximation, although these ideas have not been investigated.

The reduced-order coefficients $\bv_k^r$ and $\bK_k^r$ can be computed by
applying Al'brekht's method to the HJB PDEs associated with the dynamics of the following projected reduced-order operators\footnote{The transformation $\bT$ does not in general preserve the sparsity of the full-order model, so we opt to put the reduced-order model in standard form in order to use more efficient standard $k$-way Lyapunov solvers.}:
\begin{equation}\label{ch3:eq:ROM-operators}
    \begin{aligned}
        \bE_r     & = \bT^\dagger\bE\bT \in \real^{r \times r},
        \\
        \bA_r     & = \bE_r^{-1}\bT^\dagger\bA\bT \in \real^{r \times r},                                             &
        \bF_{p,r} & = \bE_r^{-1}\bT^\dagger\bF_{p}\kronF{\bT}{p} \in \real^{r \times r^p},
        \\
        \bB_r     & = \bE_r^{-1}\bT^\dagger\bB \in \real^{r \times m},                                                &
        \bG_{p,r} & = \bE_r^{-1}\bT^\dagger\bG_p \left(\kronF{\bT}{p}\otimes \bI_m\right) \in \real^{r \times m r^p},   \\
        \bq_{p,r} & = {\kronF{\bT}{p}}^\top \bq_p \in \real^{r^p},                                                    &
        \br_{p,r} & = \br_p  \kronF{\bT}{p} \in \real^{m^2 \times r^p}.
    \end{aligned}
\end{equation}

The next theorem corresponds to \cref{ch3:thm:kiPoly} for the reduced-order dynamics computed using the eigenvectors of $\bV_2$ as a reduction basis.
\begin{theorem}[Reduced feedback law polynomial coefficients]\label{ch3:thm:kiPoly-reduced}
    Consider the optimal control problem \cref{ch3:eq:opt-contr-problem} with dynamics \labelcref{ch3:eq:FOM-NL} where the functions the functions $\bf(\bx)$, $\bg(\bx)$, $\bm{Q}(\bx) \succeq 0$, and $\bm{R}(\bx) \succ 0$ are analytic and $\bE$ is invertible.
    Let the optimal feedback law $\bu(\bx)$ be expanded as in~\cref{ch3:eq:Ki-coeffs-reduced} with the coefficients $\bK_1 \in \real^{m \times n}$ and $\bK^r_i \in \real^{m \times r^i}$ for $i=2,\dots,d-1$.
    Then $\bK_1$ is the linear gain associated with the full-order LQR problem, and $\overline\bV_2 \in \real^{n \times n}$ is the corresponding ARE solution.

    Now, let $\bT \in \real^{n\times r}$ contain $r$ dominant eigenvectors of $\bV_2 = \bE^\top \overline\bV_2 \bE$, and construct the reduced-order model coefficients as in
    \cref{ch3:eq:ROM-operators}.
    Furthermore, define $\bV_2^r = \bT^{\top} \bV_2 \bT$ and $\bK_1^r = \bK_1 \bT$ for use in the computations of the higher-order gain coefficients.
    For $3 \leq k \leq d$,
    the linear systems
    \begin{equation}
        \begin{split}
            \cL_{k}(\bA_r + \bB_r\bK^r_1 )^\top  {\bv}^r_k & = -
            \rm{symmetrize}\left(\bb\right)  ,\label{ch3:eq:reduced-value-function-linear-system} \\
            \bb                                            & \coloneqq
            \sum_{\mathclap{i+p=k+1}} \cL_{i} (\bF^r_p)^\top \bv^r_i
            + \sum_{\mathclap{i+p+q=k+1}} i
            \Vec\left[{\bK^r_q}^\top\cG\cV^r_{p,i}\right]
            +  \bq^r_k
            + \sum_{\mathclap{i+p+q=k}} \Vec\left[{\br^r_i}^\top \left(\bK^r_p \otimes \bK^r_q \right)\right] ,
        \end{split}
    \end{equation}
    where $\cG\cV^r_{p,i} = {\text{reshape}\left({\bG^r_p}^\top \bV^r_i, m,r^{k-q}\right)}$, are uniquely solvable, and their solutions ${\bv}^r_k \in \real^{r^k}$ are used to determine
    the  gain coefficients $\bK^r_{k-1}$ for $3 \leq k \leq d$ as
    \begin{align}
        \bK^r_{k-1} & =  \bR_0^{-1}  \left[- \sum_{\mathclap{\substack{p+q=k-1 \\q\neq0}}}  {\bR^r_q}^\top \left(\bK^r_p \otimes \bI_{r^q}\right)   -\sum_{\mathclap{i+p=k}} \frac{1}{2} i
            \cG\cV_{p,i}
            \right]. \label{ch3:eq:reduced-control-gain-linear-system}
    \end{align}
\end{theorem}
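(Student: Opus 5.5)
The plan is to reduce the statement to \cref{ch3:thm:kiPoly}, applied to the projected reduced-order model \cref{ch3:eq:ROM-operators}, which is a polynomial control-affine system in standard form with $\bE_r^{-1}$ already absorbed. Concretely, I will substitute $\bx \approx \bT\bx_r$ into the polynomial dynamics \cref{ch3:eq:FOM-Poly} and multiply on the left by $\bT^\dagger$. Then I will multiply by $\bE_r^{-1}$, which exists because $\bT^\dagger\bE\bT$ is assumed nonsingular; this is exactly where the invertibility of $\bE$ is used. The result is
\begin{equation*}
\dot\bx_r = \bA_r\bx_r + \sum_p \bF_{p,r}\kronF{\bx_r}{p} + \Big(\bB_r + \sum_p \bG_{p,r}(\kronF{\bx_r}{p}\otimes\bI_m)\Big)\bu .
\end{equation*}
Here I use the identity $\kronF{(\bT\bx_r)}{p} = \kronF{\bT}{p}\kronF{\bx_r}{p}$ together with the mixed-product rule $(\kronF{\bT}{p}\kronF{\bx_r}{p})\otimes \bu = (\kronF{\bT}{p}\otimes\bI_m)(\kronF{\bx_r}{p}\otimes\bu)$. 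The same substitution in \cref{ch3:eq:performanceIndex-Poly} yields the reduced weights $\bq_{p,r}$ and $\br_{p,r}$, so the reduced problem has exactly the form required by \cref{ch3:thm:kiPoly} with $\bE=\bI_r$. Since $\cL^{\bI}_k=\cL_k$ and $\kronF{\bI}{i-1}=\bI$, the formulas \cref{ch3:eq:value-function-linear-system,ch3:eq:control-gain-linear-system} specialize verbatim to \cref{ch3:eq:reduced-value-function-linear-system,ch3:eq:reduced-control-gain-linear-system}.

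The key departure from a plain application of \cref{ch3:thm:kiPoly} is that the quadratic and linear terms are \emph{not} recomputed from a reduced Riccati equation. Instead I will set $\bV^r_2=\bT^\top\bV_2\bT$ and $\bK_1^r=\bK_1\bT$, and I must check that these play the role of the $k=2$ data in the recursion. The point to observe is that the proof of \cref{ch3:thm:kiPoly} (in the appendix) derives the degree-$k$ equations for $k\ge3$ by collecting degree-$k$ terms in the HJB PDE \cref{ch3:eq:HJB-PDE,ch3:eq:HJB-PDE-u}. In that derivation, the GARE and the formula for $\bK_1$ are used only through the quantities $\bV_2$ and $\bK_1$ appearing in lower-order products. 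So the degree-$k$ coefficient identities hold for \emph{whatever} fixed $\bV_2,\bK_1$ are inserted, as a formal consequence of matching coefficients. The remaining task is to verify that with $\bx=\bT\bx_r$ the full-order quadratic value term becomes $\tfrac12\bx_r^\top\bT^\top\bV_2\bT\bx_r$ and the linear feedback becomes $\bK_1\bT\bx_r$. The degree-$k$ equations of the reduced HJB (with these lower-order terms frozen) then read exactly as \cref{ch3:eq:reduced-value-function-linear-system}.

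The main obstacle is unique solvability, which requires $\cL_k(\bA_r+\bB_r\bK_1^r)^\top$ to be nonsingular. By \cite{Grasedyck2004}, this holds whenever no sum of $k$ eigenvalues of $\bA_r+\bB_r\bK^r_1$ vanishes, and in particular whenever $\bA_r+\bB_r\bK^r_1$ is Hurwitz. Because $\bK^r_1$ is the full-order gain restricted to the dominant invariant subspace of $\bV_2$, rather than the reduced LQR gain, stability of the Galerkin-projected closed loop is not automatic. I would first try to establish it via a Lyapunov argument. The full closed loop satisfies the Lyapunov identity $(\bA+\bB\bK_1)^\top\bV_2+\bV_2(\bA+\bB\bK_1)=-\bQ_2-\bK_1^\top\bR_0\bK_1$, which follows from the GARE. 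I would project this identity with $\bT$ and exploit that $\bT$ spans an eigenspace of $\bV_2$, so that $\bV_2\bT=\bT\Lambda$ and the cross terms collapse. The aim is to show that $\bV_2^r=\Lambda\succ0$ is a Lyapunov function for the reduced closed loop. If the cross terms do not vanish cleanly when $\bE\ne\bI$, the fallback is to state solvability under the mild genericity condition on the eigenvalue sums, which is what Bartels-Stewart actually needs. The gain formula \cref{ch3:eq:reduced-control-gain-linear-system} then follows directly from the degree-$(k-1)$ part of \cref{ch3:eq:HJB-PDE-u}, exactly as in the full-order theorem.
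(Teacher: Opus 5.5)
Your reduction (project, absorb $\bE_r^{-1}$, apply \cref{ch3:thm:kiPoly} with $\bE=\bI_r$) is the whole of the paper's one-line proof. You are right that it does not deliver unique solvability, because $\bK_1^r=\bK_1\bT$ is not a reduced LQR gain. However, your Lyapunov route cannot close this gap, and the obstruction is not the cross terms for $\bE\neq\bI$.

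Take $\bE=\bI$. The cross terms vanish exactly via $\bV_2\bT=\bT\Lambda$, and what you obtain is
\[
(\bA_r+\bB_r\bK_1^r)^\top\Lambda+\Lambda(\bA_r+\bB_r\bK_1^r)=-\bT^\top\left(\bQ_2+\bK_1^\top\bR_0\bK_1\right)\bT\preceq\bzero .
\]
This only excludes the open right half-plane. An imaginary-axis eigenvector $\mathbf{w}$ survives whenever $\bQ_2\bT\mathbf{w}=\bzero$ and $\bK_1\bT\mathbf{w}=\bzero$, and full-order stabilizability and detectability do not rule this out.

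Here is a concrete counterexample. Take $n=2$, $r=1$, $\bE=\bI$, $\bA=\begin{bmatrix}0&1\\-c&0\end{bmatrix}$ with $0<c<1$, $\bB=[1\ 0]^\top$, $\bQ_2=\bB\bB^\top$, and $\bR_0=1$.
\begin{itemize}
\item The pair is controllable and observable.
\item The stabilizing Riccati solution is $\bV_2=\operatorname{diag}(1,1/c)$, with $\bK_1=[-1\ 0]$ and closed-loop characteristic polynomial $\lambda^2+\lambda+c$, which is Hurwitz.
\item Since $1/c>1$, the dominant eigenvector is $\bT=[0\ 1]^\top$.
\item Then $\bA_r=0$, $\bB_r=0$ and $\bK_1^r=0$, so $\cL_k(\bA_r+\bB_r\bK_1^r)=0$ for every $k$.
\end{itemize}
So the claim of unique solvability is false as stated. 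Your fallback (assume $\bA_r+\bB_r\bK_1^r$ is Hurwitz, or that no sum of $k$ of its eigenvalues vanishes) is a necessary extra hypothesis, not an optional one. Neither your primary argument nor the paper's appeal to \cref{ch3:thm:kiPoly} can prove the theorem without it; the paper tacitly carries Hurwitz-ness of $\bA+\bB\bK_1$ over to its projection.

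There are also two smaller points. First, your justification for freezing $\bV_2^r$ and $\bK_1^r$ — that the degree-$k$ identities hold ``for whatever fixed $\bV_2,\bK_1$'' — is not correct. The unknown $\bK^r_{k-1}$ drops out of the degree-$k$ HJB terms only because its two contributions combine into $\bx_r^\top(\bV_2^r\bB_r+{\bK_1^r}^\top\bR_0)\bK^r_{k-1}\kronF{\bx_r}{k-1}$. That term vanishes iff $\bB_r^\top\bV_2^r=-\bR_0\bK_1^r$.
\begin{itemize}
\item For $\bE=\bI$ this follows from $\bV_2\bT=\bT\Lambda$, and this is where the eigenvector choice genuinely matters.
\item For $\bE\neq\bI$ with the operators in \cref{ch3:eq:ROM-operators} it generally fails: $\bK_1\bT=-\bR_0^{-1}\bB^\top\bE^{-\top}\bT\Lambda$, whereas $-\bR_0^{-1}\bB_r^\top\bV_2^r=-\bR_0^{-1}\bB^\top\bT\bE_r^{-\top}\Lambda$.
\end{itemize}
Second, invertibility of $\bE$ does not imply invertibility of $\bE_r=\bT^\top\bE\bT$; it does if $\bE\succ0$, as for mass matrices. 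So invertibility of $\bE_r$ is a separate assumption, not the place where invertibility of $\bE$ is used.
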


\begin{proof}
    The proof follows the proof for \cref{ch3:thm:kiPoly}, but replacing the dynamics for $\bx$ with those in the reduced coordinates $\bx_r$.
\end{proof}

\begin{corollary}[Value function and feedback law coefficient approximations]\label{ch3:cor:vi-Ki-Poly_approxs}
    The value function polynomial coefficients in the full-order coordinates, $\bv_k$, are approximated in terms of the value function coefficients in the reduced coordinates, $\bv_k^r$, as
    \begin{align*}
        \bv_k \approx {\kronF{\bT}{k}}  \bv_k^r.
    \end{align*}
    The feedback law polynomial coefficients in the full-order coordinates $\bK_k$ are approximated in terms of the feedback law coefficients in the reduced coordinates $\bK_k^r$ as
    \begin{align*}
        \bK_k \approx   \bK_k^r {\kronF{{\bT^\top}}{k}}.
    \end{align*}
\end{corollary}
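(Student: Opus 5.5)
The plan is to compare the two expansions term by term: the reduced approximations \cref{ch3:eq:vi-coeffs-reduced,ch3:eq:Ki-coeffs-reduced} against the full expansions written in the original coordinates $\bx$. The only link between the coordinate systems is the projection defining the reduced state. Since $\bT$ consists of orthonormal eigenvectors of the symmetric matrix $\bV_2$, we have $\bT^\dagger=\bT^\top$, and the reduced state is obtained from the full state by $\bx_r = \bT^\top \bx$. This is the orthogonal projection underlying the approximation $\bx\approx\bT\bx_r$.

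First, for the value function, take a single degree-$k$ term $\tfrac12 {\bv^r_k}^\top \kronF{\bx_r}{k}$. Substituting $\bx_r=\bT^\top\bx$ and applying the mixed-product property of the Kronecker product gives
\[
\kronF{(\bT^\top\bx)}{k}={\kronF{\bT}{k}}^\top\kronF{\bx}{k},
\]
so that
\[
{\bv^r_k}^\top \kronF{\bx_r}{k} = \left({\kronF{\bT}{k}}\bv^r_k\right)^\top \kronF{\bx}{k}.
\]
The same term in the full-order expansion is $\tfrac12\bv_k^\top\kronF{\bx}{k}$. Identifying the two coefficients of $\kronF{\bx}{k}$ yields the approximation $\bv_k\approx\kronF{\bT}{k}\bv_k^r$. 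Symmetry is preserved, because $\kronF{\bT}{k}$ commutes with index permutations. Hence no additional symmetrization is needed, and the identification is consistent with the symmetric convention used in \cref{ch3:thm:kiPoly}.

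Second, the feedback terms are handled identically. Here
\[
\bK^r_k\kronF{\bx_r}{k}=\bK^r_k\kronF{(\bT^\top)}{k}\kronF{\bx}{k}.
\]
Matching this against $\bK_k\kronF{\bx}{k}$ gives $\bK_k\approx\bK^r_k\kronF{{\bT^\top}}{k}$.

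The main subtlety is explaining why each relation is an approximation rather than an equality. I would argue this in two steps. The reduced coefficients are produced by \cref{ch3:thm:kiPoly-reduced}, which is the exact Al'brekht recursion for the projected model \cref{ch3:eq:ROM-operators}, not for the full model. The full model and the projected model coincide only on $\mathrm{range}(\bT)$, and only up to the projection error $\bx-\bT\bT^\top\bx$. Consequently, if $\bT$ were square, so that $r=n$, every operator in \cref{ch3:eq:ROM-operators} would be an exact similarity transform. In that case \cref{ch3:thm:kiPoly-reduced} would reproduce \cref{ch3:thm:kiPoly}, just as \cref{ch3:cor:viPoly} follows from an exact change of variables, and both relations would hold with equality.

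I would not try to bound the truncation error. I would state the result as the coordinate identification above, with the approximation symbol accounting for the reduction. This interpretive step is the main obstacle, since rigorous error estimates are outside the scope of the corollary.
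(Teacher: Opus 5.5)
Your proposal is correct and follows essentially the same route as the paper: substitute $\bx_r=\bT^\top\bx$ into the reduced expansions, use $\kronF{(\bT^\top\bx)}{k}={\kronF{\bT}{k}}^\top\kronF{\bx}{k}$, and match the coefficients of $\kronF{\bx}{k}$, mirroring the proof of \cref{ch3:cor:viPoly}. Your extra remarks go beyond the paper's two-line sketch and are consistent with it: symmetry is preserved, and the relations become exact when $r=n$, since $\bT$ is then orthogonal and the projection is an exact change of variables.
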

\begin{proof}
    The proof follows similarly to \cref{ch3:cor:viPoly}.
    According to our approximations,
    $\bv_k^\top \kronF{\bx}{k} \approx {\bv_k^r}^\top \kronF{\bx_r}{k}$
    and
    $\bK_k \kronF{\bx}{k} \approx \bK_k^r \kronF{\bx_r}{k}$.
    Inserting
    $\bx_r \approx \bT^\top \bx$
    and rearranging proves the results.
\end{proof}

\begin{remark}
    In practice, \cref{ch3:cor:vi-Ki-Poly_approxs} should not be invoked to obtain an approximation to the value function $V(\bx)$ and feedback law $\bu(\bx)$, as it involves scaling the computed reduced-order coefficients back up to the full dimension.
    This is expensive both in terms of flops and memory.
    Rather, the reduced-order state approximation $\bx_r = \bT^\top \bx$ should be used in the evaluation of the higher-order terms.
    For example, to compute a cubic approximation to $\bu(\bx)$ for a given state $\bx$, instead of evaluating $\bu(\bx) \approx \bK_1 \bx + \bK_2^r {\kronF{{\bT^\top}}{2}} \kronF{\bx}{2} + \bK_2^r {\kronF{{\bT^\top}}{3}} \kronF{\bx}{3}$, one should
    first form $\bx_r = \bT^\top \bx$ and then evaluate $\bu(\bx) \approx \bK_1 \bx + \bK_2^r \kronF{\bx_r}{2} + \bK_2^r  \kronF{\bx_r}{3}$.
\end{remark}
\begin{remark}
    As described in the documentation for our \textsc{Matlab} function \texttt{ppr()}, a polynomial feedback law can be constructed and evaluated efficiently using our package and two lines of code:
    \begin{align*}
         & \texttt{[v, K] = ppr(f, g, q, r, degree, options);} \\
         & \texttt{uPPR = @(x) kronPolyEval(K, x);}
    \end{align*}
    where $\texttt{f}$, $\texttt{g}$, $\texttt{q}$, $\texttt{r}$, and $\texttt{degree}$ are the inputs described in \cref{ch3:alg:full-PPR}, namely the system dynamics, cost function parameters, and desired value function approximation degree.
    If the dimensionality reduction acceleration is desired, the reduced-order dimension can be passed as an option using $\texttt{options.reducedDimension}$; similarly, a non-identity mass matrix can be passed using $\texttt{options.E}$.
    For more information on the usage of the \textsc{Matlab} package, we refer to the \textsc{readme} file supplied in \cite{Corbin2024}.
\end{remark}

\subsection{Computational complexity and memory requirements}\label{ch3:sec:reduced-complexity}
The efficient procedure described in \cref{ch3:thm:kiPoly-reduced} is summarized in \cref{ch3:alg:reduced-PPR}.
The main difference between \cref{ch3:alg:reduced-PPR} and \cref{ch3:alg:full-PPR} occurs in lines 3-5, where $\bV_2$ is formed and its dominant eigenvectors are used to reduce the dimensionality of the model.
\begin{algorithm}[!ht]
    \caption{Reduced approximation to the optimal feedback law $\bu(\bx)$}\label{ch3:alg:reduced-PPR}
    \begin{algorithmic}[1]
        \Require System matrices $\bE$, $\bA$, $\bB$, $\{\bF_p\}_{p=2}^{d-1}$, $\{\bG_p\}_{p=1}^{d-2}$; cost function weights $\{\bq_p\}_{p=2}^d$, $\{\br_p\}_{p=0}^{d-2}$; desired approximation degree~$d$; desired reduced-order $r$
        \Ensure Approximation of the polynomial gain coefficients $\{\bK_i\}_{i=1}^{d-1}$
        \State Solve the ARE \cref{ch3:eq:ARE} for $\overline\bV_2$.
        \State Form the linear gain coefficient $\bK_1$ according to \cref{ch3:eq:K1}.
        \State Form $\bV_2 = \bE^\top \overline\bV_2 \bE$.
        \State Compute and define $\bT$ to be the $r$ dominant eigenvectors of $\bV_2$.
        \State Form the coefficients of the reduced-order dynamics and cost by projection, as in \cref{ch3:eq:ROM-operators}.
        \For{$k=3$ to $d$}
        \State Solve \cref{ch3:eq:reduced-value-function-linear-system} for $\bv_k^r$.
        \State Form the next reduced gain coefficient $\bK_{k-1}^r$ according to \cref{ch3:eq:reduced-control-gain-linear-system}.
        \EndFor
    \end{algorithmic}
\end{algorithm}
Computing the quantities $\overline\bV_2$ and $\bK_1$ in lines 1 and 2 associated with the full-order LQR problem require $O(n^3)$ operations, identical to \cref{ch3:alg:full-PPR}.
Forming $\bV_2$ and subsequently computing its eigenvectors in lines 3 and 4 also generally require $O(n^3)$ operations.
Forming the reduced-order dynamics and cost via projection with $\bT$ in line 5 has a computational complexity of $O(rn^{d})$, where $d$ is the desired degree of value function approximation.
The remainder of the operations in lines 6 through 11 are identical to those in \cref{ch3:alg:full-PPR}, except they are performed with dynamics of reduced dimension $r$, so the remaining operations have a complexity of $O(r^{d+1})$.
The overall computational complexity is thus dominated by the $O(rn^d)$ step during which the full-order model is projected down to a reduced dimension.
We remark that in practice, exploiting sparsity and the generalized form with the mass matrix $\bE$, the projection step can be performed much more efficiently,
and the complexity scales more as $O(r^{d+1})$.

Due to the requirement to store a model in memory before any operations can take place, the overall memory footprint of \cref{ch3:alg:reduced-PPR}
is $O(n^d)$ in the case of dense models.
Again, however, the model operators for large models such as finite element models are rarely dense and typically exhibit significant sparsity.
Thus, assuming the model operators that are inputs to the algorithm are sufficiently sparse to be stored in memory and that their contribution can be neglected, the remaining memory requirements of \cref{ch3:alg:reduced-PPR} once the reduced-order model has been computed diminish to $O(r^d)$.

\subsection{Low-rank adaptation}\label{ch3:sec:lr-adaptation}
Under certain conditions, such as when the quadratic state penalty $\bQ_2$ in \cref{ch3:eq:performanceIndex-Poly,ch3:eq:ARE} exhibits low-rank structure and when the number of inputs $m$ is much smaller than the number of states $n$, it has been shown that the Riccati solution exhibits low-rank structure and can be approximated as $\bV_2 \approx \bZ \bZ^\top$.
This has spurred the development of many advanced Riccati solvers that leverage sparsity and low-rank structure to enable scalability to much higher dimensions than traditional Schur-type and invariant subspace Riccati solvers \cite{Penzl1999,Penzl2000,Opmeer2010,Baker2015,Benner2013}.
These methods work especially well when the model operators are sparse, as is often the case for semi-discretized PDEs.
For an introduction to the topic of low-rank Riccati solvers, we recommend the survey \cite{Benner2013}.

For the nonlinear control problem considered herein,
\cref{ch3:alg:full-PPR,ch3:alg:reduced-PPR} can be adapted to leverage these modern low-rank Riccati solvers.
We have implemented the option in our software package to use the
low-rank alternating directions implicit (LR-ADI) solver \cite{Benner2017b} provided in the M-M.E.S.S. package \cite{Saak2022}.
This can be done by setting \texttt{options.lrradi = true} and supplying the low-rank factor $\bQ_2 = \bC^\top \bC$ via \texttt{options.C}.
Step 1 of \cref{ch3:alg:reduced-PPR} is replaced with solving the ARE for the low-rank factors $\overline\bV_2 \approx \overline\bZ \overline\bZ^\top$.
Subsequently, anywhere that $\overline\bV_2$ is required, we avoid forming $\overline\bV_2$ explicitly and deal instead directly with the low-rank factor $\overline\bZ$.
Most notably, in steps 3 and 4, we form $\bZ = \bE^\top \overline\bZ$, and the dominant eigenvectors of $\bV_2$ can be computed as the left singular vectors of $\bZ$ without needing to explicitly form $\bV_2$.
This procedure is summarized in \cref{ch3:alg:lr-PPR}.
\begin{algorithm}[!ht]
    \caption{Reduced approximation to the optimal feedback law $\bu(\bx)$ with low-rank adaptation}\label{ch3:alg:lr-PPR}
    \begin{algorithmic}[1]
        \Require Sparse system matrices $\bE$, $\bA$, $\bB$, $\{\bF_p\}_{p=2}^{d-1}$, $\{\bG_p\}_{p=1}^{d-2}$; cost function weights $\{\bq_p\}_{p=2}^d$, $\{\br_p\}_{p=0}^{d-2}$; desired approximation degree $d$; desired reduced-order $r$
        \Ensure Approximation of the polynomial gain coefficients $\{\bK_i\}_{i=1}^{d-1}$
        \State Solve the ARE \cref{ch3:eq:ARE} for the low-rank factors $\overline\bV_2\approx \overline\bZ \overline\bZ^\top$
        and the linear gain coefficient $\bK_1$.
        \State Compute and define $\bT$ to be the first $r$ left singular vectors of $\bZ = \bE^\top \overline\bZ$.
        \State Form the coefficients of the reduced-order dynamics and cost by projection, as in \cref{ch3:eq:ROM-operators}.
        \For{$k=3$ to $d$}
        \State Solve \cref{ch3:eq:reduced-value-function-linear-system} for $\bv_k^r$.
        \State Form the next reduced gain coefficient $\bK_{k-1}^r$ according to \cref{ch3:eq:reduced-control-gain-linear-system}.
        \EndFor
    \end{algorithmic}
\end{algorithm}

\begin{remark}
    Truncated polynomial feedback law approximations are not guaranteed to preserve global, and in some cases even local, stability of the closed-loop system.
    However, unlike other polynomial approximation techniques such as Galerkin approximations,
    Al'brekht's method \emph{does} guarantee local stabilization, which it inherits from the assumption that the linearized dynamics are stabilized by the solution to the Riccati equation.
    As such, the controllers computed using \cref{ch3:alg:full-PPR,ch3:alg:reduced-PPR} provide local stability guarantees, since they retain the full-order Riccati solution.
    Conversely, controllers computed using \cref{ch3:alg:lr-PPR} are only guaranteed to be locally stabilizing if the low-rank Riccati solver converges to a stabilizing solution.
\end{remark}

\section{Illustrative examples}\label{ch3:sec:examples}
To demonstrate some of the benefits of using polynomial feedback laws over linear approximations, we initially consider a 1D ODE model exhibiting a transcritical bifurcation in \cref{ch3:sec:example1}.
Next, we apply the PPR method to the inverted pendulum in \cref{ch3:sec:pendulum}, since most readers are familiar with the example and its simple two-dimensional dynamics can provide meaningful insight.
Finally, we consider a comparison with another Taylor-series based approach \cite{Breiten2018} on a 1D semi-discretized Fokker-Planck equation in \cref{ch3:sec:fokker-planck}.

The results shown in \cref{ch3:sec:examples,ch3:sec:allen-cahn} are computed on a Windows 11 laptop with an AMD Ryzen 9 HX 370 CPU with 32 GB of RAM.
The examples and numerical codes are available open-source in the \href{ch3:https://github.com/cnick1/PPR}{PPR Github repository}, with the specific version of the code used in this paper captured in the \texttt{v1.0.0} tag.
The results in the tables and figures can be reproduced by running the script \texttt{examplesForJCP.m}.

\subsection{1D example with transcritical bifurcation}\label{ch3:sec:example1}
Consider a 1D model with initial condition $x(0) = x_0$ and state equation featuring a cubic drift nonlinearity
\begin{align}
    \dot{x}(t) & = \varepsilon x(t) -  2x(t)^2 + x(t)^3 + u(t).
\end{align}
The open-loop system exhibits a transcritical bifurcation at $\varepsilon = 0$.
For $\varepsilon < 0$, the origin is \emph{stable}, and there is an \emph{unstable} equilibrium to the \emph{left} of the origin, whereas
for $\varepsilon > 0$, the origin is \emph{unstable}, and there is a \emph{stable} equilibrium to the \emph{right} of the origin.
For $\varepsilon = 0$, the origin is ``half-stable''.
\Cref{ch3:fig:1DPhasePortrait-openLoop} depicts the phase portrait for the unstable open-loop system with $\varepsilon = 1/10$.
We wish to globally asymptotically stabilize the model; to that end, we formulate an optimal control problem to find the control $u_*$ that minimizes the cost
\begin{equation}
    J(x_0,u) =
    \frac{1}{2} \int_{0}^\infty x(t)^2 + \frac{2}{5}x(t)^3 + u(t)^2 \,\,{\rm{d}} t.
\end{equation}
Using the method proposed in \cref{ch3:thm:kiPoly}, we compute a degree 4 approximation to the value function (rounded to four decimal places)
\begin{align*}
    V(x) =   0.5525 x^2   -0.6667 x^3  +  0.7724 x^4, 
\end{align*}
leading to a cubic control law
\begin{align*}
    u(x)
     & = - \frac{\partial^\top V(x)}{\partial x} = - 1.1050  x +  2  x^2  - 3.0896 x^3 .
\end{align*}
The closed-loop dynamics under the action of the cubic controller are
\begin{align*}
    \dot{x}(t) & =  - 1.0050 x(t) - 2.0896 x(t)^3.
\end{align*}
We compare this with the linear-quadratic regulator obtained by retaining only the linear term in the feedback law;
the closed-loop dynamics under the action of the LQR controller are
\begin{align*}
    \dot{x}(t) & =  - 1.0050 x(t) - 2x(t)^2 + x(t)^3.
\end{align*}

In \cref{ch3:fig:1DPhasePortrait-linearControl,ch3:fig:1DPhasePortrait-cubicControl}, we compare the model's closed-loop dynamics using the linear and cubic feedback laws with the open-loop phase portrait shown in \cref{ch3:fig:1DPhasePortrait-openLoop}.
While the linear feedback locally asymptotically stabilizes the origin, the overall behavior is qualitatively similar to the open-loop behavior.
This is because the linear controller has no effect on the quadratic and cubic components in the dynamics; hence a second unstable equilibrium is still present to the left of the origin which undesirably repels trajectories.

Conversely, the cubic controller eliminates the quadratic component of the drift due to the cubic term in the cost function.
Furthermore, it inverts the cubic dynamics so that they have a stabilizing rather than destabilizing effect.
As a result, the cubic controller is able to globally asymptotically stabilize the origin, as seen in the phase portrait in \cref{ch3:fig:1DPhasePortrait-cubicControl}.

\begin{figure}[htb]
    \centering
    \begin{subfigure}[]{0.3\textwidth}
        \centering

        \def\spyfactor{10}
        \def\spyviewersize{1.25cm}

        \def\spyonclipreduce{0.25pt}
        \def\spywindowlinewidth{.75pt}
        \def\spywindowlinecolor{blue}

        \def\spypoint1{axis cs:.025,0}
        \def\spyviewer1{axis cs:-1,1.5}

        \begin{tikzpicture}[
                spy/.style={%
                        draw,blue,
                        line width=0.75pt,
                        circle,
                        inner sep=0pt,
                    },
            ]
            \begin{axis}[
                    axis lines=middle,
                    xmin=-2,xmax=3,
                    ymin=-3,ymax=3,
                    ymajorticks=false,
                    xlabel=$x$,
                    ylabel=$f(x)$,
                    width = 140pt,
                    height = 125pt,
                ]
                \addplot[thick,variable=\t, domain=-2:3, postaction=decorate] ({t}, {0});
                \addplot[thick,variable=\t, domain=-2.5:-.5, decoration={markings,
                            mark=between positions 0 and 1 step .25 with {\arrow [scale=1]{stealth reversed}}}, postaction=decorate] ({t}, {0});
                \addplot[thick,variable=\t, domain=0.5:2.5, decoration={markings,
                            mark=between positions 0 and .5 step .25 with {\arrow [scale=1]{stealth reversed}}
                        }, postaction=decorate] ({t}, {0});
                \addplot[thick,variable=\t, domain=2:3, decoration={markings,
                            mark=between positions .55 and .9 step .5 with {\arrow [scale=1]{stealth}}
                        }, postaction=decorate] ({t}, {0});

                \addplot[domain=-2:3, samples=100, color=Set1-A, line width = 0.5 ] {1/10*x - 2*x^2 + x^3};

                \addplot [only marks, mark=*, mark options={scale=1, draw=black, fill=black}] coordinates {(1-3/sqrt(10),0)};
                \addplot [only marks, mark=*, mark options={scale=1, draw=black, fill=white}] coordinates {(0,0)};
                \addplot [only marks, mark=*, mark options={scale=1, draw=black, fill=white}] coordinates {(1+3/sqrt(10),0)};

                \node[\spywindowlinecolor, line width=\spywindowlinewidth, circle, draw, minimum size={\spyviewersize/\spyfactor}, inner sep=0pt] (spypoint1) at (\spypoint1) {};
                \node[\spywindowlinecolor, line width=\spywindowlinewidth, circle, draw, minimum size=\spyviewersize, inner sep=0pt,fill=white] (spyviewer1) at (\spyviewer1) {};
                \draw[\spywindowlinecolor, line width=\spywindowlinewidth] (spypoint1) edge (spyviewer1);

            \end{axis}
            \begin{scope}
                \clip (spyviewer1) circle (0.5*\spyviewersize-\spyonclipreduce);
                \begin{scope}[
                        shift={($1/\spyfactor*(spyviewer1)-1/\spyfactor*(spypoint1)$)},
                        scale around={\spyfactor:(spypoint1)}
                    ]
                    \begin{axis}[
                            axis lines=middle,
                            axis line style = {line width = 0.05},
                            xmin=-2,xmax=3,
                            ymin=-.2,ymax=.2,
                            ymajorticks=false,
                            xlabel=$x$,
                            ylabel=$f(x)$,
                            width = 140pt,
                            height = 125pt,
                        ]
                        \addplot[domain=-.1:.1, samples=100, color=Set1-A, line width = 0.05] {1/10*x - 2*x^2 + x^3};

                        \addplot[line width = 0.015,variable=\t, domain=0:.0513, decoration={markings,
                                    mark=between positions .6 and .75 step .5 with {\arrow [scale=.19]{stealth}}
                                }, postaction=decorate] ({t}, {0});

                        \fill[black] (0.0513167,0) circle (.2pt);
                        \fill[black] (0,0) circle (.2pt);
                        \fill[white] (0,0) circle (.16pt);

                    \end{axis}
                \end{scope}
            \end{scope}

        \end{tikzpicture}

        \caption{No control.}
        \label{ch3:fig:1DPhasePortrait-openLoop}
    \end{subfigure}
    \begin{subfigure}[]{0.3\textwidth}
        \centering
        \def\spyfactor{5}
        \def\spyviewersize{1.75cm}

        \def\spyonclipreduce{0.25pt}
        \def\spywindowlinewidth{.75pt}
        \def\spywindowlinecolor{blue}

        \def\spypoint1{axis cs:-.115,0}
        \def\spyviewer1{axis cs:2,5}

        \begin{tikzpicture}[
                spy/.style={%
                        draw,blue,
                        line width=0.75pt,
                        circle,
                        inner sep=0pt,
                    },
            ]
            \begin{axis}[
                    axis lines=middle,
                    xmin=-2,xmax=3,
                    ymin=-3,ymax=3,
                    ymajorticks=false,
                    xlabel=$x$,
                    ylabel=$f(x) + g(x) u_{1}(x)$,
                    ylabel style={fill=white, inner sep=1pt},
                    width = 140pt,
                    height = 125pt,
                ]
                \addplot[thick,variable=\t, domain=-2:3, postaction=decorate] ({t}, {0});
                \addplot[thick,variable=\t, domain=-2.5:-.5, decoration={markings,
                            mark=between positions 0 and .9 step .25 with {\arrow [scale=1]{stealth reversed}}}, postaction=decorate] ({t}, {0});
                \addplot[thick,variable=\t, domain=0.5:2.5, decoration={markings,
                            mark=between positions 0 and .75 step .25 with {\arrow [scale=1]{stealth reversed}}
                        }, postaction=decorate] ({t}, {0});
                \addplot[thick,variable=\t, domain=2.5:3, decoration={markings,
                            mark=between positions 0 and .5 step .5 with {\arrow [scale=1]{stealth}}
                        }, postaction=decorate] ({t}, {0});
                \addplot[thick,variable=\t, domain=-.4:0, decoration={markings,
                            mark=between positions 0.6 and .9 step .5 with {\arrow [scale=1]{stealth}}
                        }, postaction=decorate] ({t}, {0});
                \addplot[thick,variable=\t, domain=-.4:-.8, decoration={markings,
                            mark=between positions 0.65 and .9 step .5 with {\arrow [scale=1]{stealth}}
                        }, postaction=decorate] ({t}, {0});

                \addplot[domain=-2:3, samples=100, color=Set1-A, line width = 0.5 ] {- 1.0050*x  - 2*x^2 + x^3};

                \addplot [only marks, mark=*, mark options={scale=1, draw=black, fill=black}] coordinates {(0,0)};
                \addplot [only marks, mark=*, mark options={scale=1, draw=black, fill=white}] coordinates {(-.41598,0)};
                \addplot [only marks, mark=*, mark options={scale=1, draw=black, fill=white}] coordinates {(2.416,0)};

            \end{axis}

        \end{tikzpicture}        \caption{Linear control $u_1(x)$.}
        \label{ch3:fig:1DPhasePortrait-linearControl}
    \end{subfigure}
    \begin{subfigure}[]{0.3\textwidth}
        \centering
        \begin{tikzpicture}
            \begin{axis}[
                    axis lines=middle,
                    xmin=-2,xmax=3,
                    ymin=-3,ymax=3,
                    ymajorticks=false,
                    xlabel=$x$,
                    ylabel=$f(x) + g(x) u_{3}(x)$,
                    width = 140pt,
                    height = 125pt,
                ]
                \addplot[thick,variable=\t, domain=-2:3, postaction=decorate] ({t}, {0});
                \addplot[thick,variable=\t, domain=-2.5:-.5, decoration={markings,
                            mark=between positions 0 and 1 step .25 with {\arrow [scale=1]{stealth}}}, postaction=decorate] ({t}, {0});
                \addplot[thick,variable=\t, domain=0.5:2.5, decoration={markings,
                            mark=between positions 0 and 1 step .25 with {\arrow [scale=1]{stealth reversed}}
                        }, postaction=decorate] ({t}, {0});
                \addplot[domain=-2:5, samples=100, color=Set1-A, line width = 0.5 ] { - 1.0050 *x - 2.0699 *x^3  };
                \addplot [only marks, mark=*, mark options={scale=1, draw=black, fill=black}] coordinates {(0,0)};
            \end{axis}
        \end{tikzpicture}        \caption{Cubic control $u_3(x)$.}
        \label{ch3:fig:1DPhasePortrait-cubicControl}
    \end{subfigure}
    \caption{Phase portrait for the 1D example for $\varepsilon = 1/10$ with (a) no control, (b) linear control, and (c) cubic control laws.}
    \label{ch3:fig:1DPhasePortraits}
\end{figure}

In general it is not guaranteed that higher-order controllers succeed in global stabilization, nor does the size of the region of stabilization necessarily increase with respect to the degree of the polynomial control law.
However, for this problem, we see that polynomial feedback provides additional flexibility for the control law to influence higher-order nonlinear terms in the dynamics.

\subsection{2D inverted pendulum example}\label{ch3:sec:pendulum}
The governing equations for an inverted pendulum with torque control $u$ applied at the pivot point, are
\begin{equation}
    \begin{split}
        \dot{x}_1(t) & = x_2(t)  ,                        \\
        \dot{x}_2(t) & = \sin \left(x_1(t)\right) + u(t),
    \end{split}
\end{equation}
where the state is given by the angular position $x_1$ and the angular velocity $x_2$ of the pendulum relative to the unstable equilibrium in the ``up'' position.
\Cref{ch3:fig:pendulumPhasePortrait} depicts the phase portrait for the inverted pendulum with no control action, with a particular trajectory corresponding to the initial condition $x_1(0) = -\pi$ and $x_2(0) = 2$ highlighted.
The highlighted trajectory corresponds to a trajectory for which no control input is required to bring the system to the equilibrium, hence the optimal control is known to be $u_* = 0$.
\begin{figure}[htb]
    \centering
    \includegraphics[page=2]{pendulumPhasePortraits.pdf}
    \caption{Inverted pendulum. Open-loop phase portrait. }
    \label{ch3:fig:pendulumPhasePortrait}
\end{figure}

To stabilize the unstable vertical position with minimum energy control, we formulate an optimal control problem to minimize the cost
\begin{equation}
    J(\bx_0,u) =
    \frac{1}{2} \int_{0}^\infty  u(t)^2 \,\,{\rm{d}} t. \label{ch3:eq:pendulum-cost}
\end{equation}
Using a Taylor expansion for $\sin (x_1)$, the dynamics are rewritten as
\begin{align*}
    \dot{x}_1(t) & = x_2(t)                                                           \\
    \dot{x}_2(t) & = \sum_{j=0}^\infty\frac{ (-1)^j x_1(t)^{2j+1}}{ (2j+1)!} + u(t) ,
\end{align*}
which can alternatively be written in matrix form as:
\begin{align*}
    \dot{\bx}(t) & = \bA \bx(t) + \sum_{i=2}^\infty \bF_i \kronF{\bx(t)}{i} + \bB u(t),
\end{align*}
where the linear system matrix is $\bA = \begin{bmatrix}0 & 1 \\ 1 & 0\end{bmatrix}$, the input matrix is $\bB = \begin{bmatrix}0 \\ 1 \end{bmatrix}$,
and the polynomial drift coefficients $\bF_i \in \real^{n \times n^i}$ are matrices of all zeros if $i$ is even, and all zero except for the (2,1) entry given by
$(-1)^{(i-1)/2}/i!$
if $i$ is odd.

We compute controllers of degree $1$, $3$, $5$, $7$, and $9$.
The closed-loop phase portraits under the action of these controllers depicted in \cref{ch3:sfig:pendulumPhasePortrait_closedLoop_a}.
\Cref{ch3:sfig:pendulumPhasePortrait_closedLoop_b} shows contour plots of the corresponding value function approximations of degree $2$, $4$, $6$, $8$, and $10$, and \cref{ch3:sfig:pendulumPhasePortrait_closedLoop_c} illustrates the residuals of the HJB PDE for each of these approximations.
For this problem, only the linear controller achieves global stabilization, with the polynomial controllers only stabilizing the system locally.
However, the higher-order control laws are closer to achieving optimality locally; compare the highlighted trajectories in \cref{ch3:sfig:pendulumPhasePortrait_closedLoop_a}, which correspond to the trajectory in \cref{ch3:fig:pendulumPhasePortrait} which asymptotically approaches the origin in the open-loop.
The higher-order controllers are able to replicate the open-loop behavior, i.e. no control, leading to a lower cost.
The linear feedback initially accelerates the pendulum, after which it has to overcompensate to negate the effect, leading to a relatively high control cost.
The degree 9 controller in particular uses very little energy and behaves similarly to the uncontrolled behavior, which is optimal for this initial condition.

\begin{figure}[htb]
    \centering
    \begin{subfigure}[]{\textwidth}
        \centering
        \includegraphics[page=3,width=\textwidth]{pendulumPhasePortraits.pdf}
        \caption{Closed-loop phase portraits; the highlighted trajectories have initial condition $x_1(0) = \theta_0$, $x_2(0) = \sqrt{2 (1-\cos (\theta_0))}$. }
        \label{ch3:sfig:pendulumPhasePortrait_closedLoop_a}
    \end{subfigure}

    \begin{subfigure}[]{\textwidth}
        \centering
        \includegraphics[page=4,width=\textwidth]{pendulumPhasePortraits.pdf}
        \caption{Value function approximations; patterned areas are where the value function approximation is negative, which is prohibited by the value function definition. }
        \label{ch3:sfig:pendulumPhasePortrait_closedLoop_b}
    \end{subfigure}

    \begin{subfigure}[]{\textwidth}
        \centering
        \includegraphics[page=5,width=\textwidth]{pendulumPhasePortraits.pdf}
        \caption{HJB PDE residuals; higher degree approximations are more accurate over a larger region.}
        \label{ch3:sfig:pendulumPhasePortrait_closedLoop_c}
    \end{subfigure}
    \caption{Inverted pendulum. Closed-loop phase portraits (a) with controls given by degree $2$, $4$, $6$, $8$, and $10$ value function approximations (b).
        The hatched regions in (b) correspond to regions where the value function is negative.
        The HJB PDE residuals in (c) can be interpreted loosely as a pointwise error metric, indicating the area where the approximation to the value function is good. Dark blue areas have errors of $10^{-3}$ or smaller, whereas the bright yellow areas have large errors; these do not necessarily correlate with the region of stabilization.
    }
    \label{ch3:fig:pendulumPhasePortrait_closedLoop}
\end{figure}

There is a tradeoff for this problem; as shown in the phase portraits, value function approximations, and in the HJB residuals, despite there being a region in which the PPR controllers perform better, outside of this region, the errors can be very high.
This is a hallmark feature of polynomial approximation, and it is well explained by the fact that the computed solutions are Taylor approximations.
Regardless of the radius of convergence,
once truncated,
higher-order polynomials grow more quickly than low-order or quadratic polynomials.
This often makes the higher-order approximations worse far from the equilibrium point.
This can be seen in the HJB residual plots, for which the values for the highest errors grow simultaneously as the region in which the error is low grows.
Still, the dark blue regions where the approximations are very good grow with increasing degrees of Taylor approximation.
Note that the HJB residual plots should be only be interpreted as an indication of the local approximation quality rather than a guarantee of performance or closed-loop stability.
For example, the thin tendrils in \cref{ch3:sfig:pendulumPhasePortrait_closedLoop_c} emanating from the origin, despite being darkly colored, do not always correlate with the closed-loop region of stability.

In \cref{ch3:tab:valueFunApproxs}, we compare the value function approximations and the integrated control costs \cref{ch3:eq:pendulum-cost} for each of the controllers until the final time $T=10$.
Any initial condition of the form $x_1(0) = \theta_0$ and $x_2(0) = \sqrt{2 (1-\cos (\theta_0))}$ lies along an open-loop trajectory that asymptotically approaches the origin,
meaning that the optimal control is known to be $u_* = 0$ and the optimal cost is likewise zero.
The first column shows the value function approximations, which approach zero as $d \to \infty$.
The true value function should be nonnegative definite, but some of the approximations are negative; this is a downside of the polynomial approximation, as they are only guaranteed to be valid locally, and there is no inherent constraint guaranteeing positivity outside of some local region.
In the second column, we see that the integrated costs tend towards zero as the control law is refined, confirming the result shown in \cref{ch3:fig:pendulumPhasePortrait_closedLoop} that the degree 9 controller approximates the optimal control very well.
The value function is defined as the value of the performance index under the action of the optimal control, so we can see that as $d$ tends to infinity, the left and right column values both are tending towards being equal to zero.

\begin{table}[htb]
    \centering
    \caption{Inverted pendulum. Value function approximations and control costs ($\bx_0 = [-\pi  \quad 2]^\top$) computed up to $T = 10$.}
    \begin{tabular}{ccc}
        \toprule
        $d$ & $\frac{1}{2} \sum_{i=2}^{d} \bv_i^\top \kronF{\bx_0}{i}$ & $ \frac{1}{2} \int_0^T u(t)^2 \rd t$ \\
        \midrule
        2   & 1.30323                                                  & 0.44946                              \\
        4   & -2.60971                                                 & 0.48623                              \\
        6   & 1.98305                                                  & 0.06537                              \\
        8   & -0.86663                                                 & 0.00287                              \\
        10  & 0.19815                                                  & 0.00016                              \\
        12  & -0.00135                                                 & 0.00001                              \\
        \bottomrule
    \end{tabular}
    \label{ch3:tab:valueFunApproxs}
\end{table}

\subsection{1D Fokker-Planck equation}\label{ch3:sec:fokker-planck}
Next, we consider the Fokker-Planck equation, which describes the time evolution of the probability density function of a stochastic process.
We follow the formulation in \cite{Breiten2018} in order to make a direct comparison.
For brevity, we defer the discussion of the governing PDE and its discretization to \cite{Breiten2018}.
Upon discretization, a finite-dimensional \textit{bilinear} system of dimension $n=99$ is obtained, featuring a linear drift vector field and an input map that is linear in the state:
\begin{align*}
    \dot{\bx} & = \bA \bx + \bB \bu + \bG_1 (\bx \otimes \bu).
\end{align*}
This is a special case of \cref{ch3:eq:FOM-Poly}.
The cost function considered in \cite{Breiten2018} is a standard quadratic cost.
We apply the proposed PPR method to test case 2 from \cite{Breiten2018} and compare with the results therein.

The value function approximation used in this work is identical to the one used in \cite{Breiten2018},
which is also based on Al'brekht's method.
The only difference in the value function computation is in the implementation: the authors in \cite{Breiten2018} use a quadrature-based $k$-way Lyapunov solver that is approximately 4 times slower than the Bartels-Stewart solver used in this work.

However, the polynomial control laws in \cite{Breiten2018} are constructed slightly different than those we propose.
The control laws we propose, if the reduction technique of \cref{ch3:sec:reduced-Albrekht} is not applied, are consistent with Al'brekht's method and are precisely Taylor approximants of the optimal control law.
For a degree $d$ value function approximation, these correspond to degree $d-1$ approximations to the optimal control law.
For example, a quadratic value function approximation $V(\bx) \approx \frac{1}{2} \bx^\top \bV_2 \bx$ obtained by solving the Riccati equation \cref{ch3:eq:ARE} leads to a linear PPR (LQR) controller $\bu_1(\bx) \approx -\bR^{-1} \bB^\top \bV_2 \bx$.
The authors in \cite{Breiten2018} instead compute a degree $d$ approximation to the optimal control law corresponding to each degree $d$ approximation to the value function.
For example, with the quadratic value function approximation $V(\bx) \approx \frac{1}{2} \bx^\top \bV_2 \bx$,
the authors in \cite{Breiten2018} instead compute a quadratic controller $\bu_{b2}(\bx) \approx -\bR^{-1} (\bB + \bG_1\bx)^\top \bV_2 \bx$ by retaining every term contributed by the nonlinear input map.
While this may appear sensible, as it represents \cref{ch3:eq:HJB-PDE-u} with the quadratic value function approximation, this type of polynomial control law corresponds to an incomplete Taylor approximant, with all but one of the degree $d$ terms in the control law omitted.
Al'brekht's method, and the PPR package, instead only retains terms contributed by the nonlinear input map \textit{if the resulting term remains degree $d-1$}, which is consistent with the Taylor approximation to the optimal control law.
As we shall see, this has ramifications on closed-loop performance.

In \cref{fig:example31_evolution}, we show the evolution of the state, which represents the distance of the probability density function from stationary density, which represents the steady-state solution.
On the left, we have the uncontrolled trajectory, whereas on the right we have the optimal trajectory under the action of the open-loop optimal control signal.
The optimal control $u_{\text{opt}}(t)$ is computed using a gradient-descent method in \cite{Breiten2018}, which is expensive and does not produce the control law in feedback form, but can be used as a reference to compare other controllers against.

\begin{figure}[htb]
    \centering
    \includegraphics[width=0.75\textwidth]{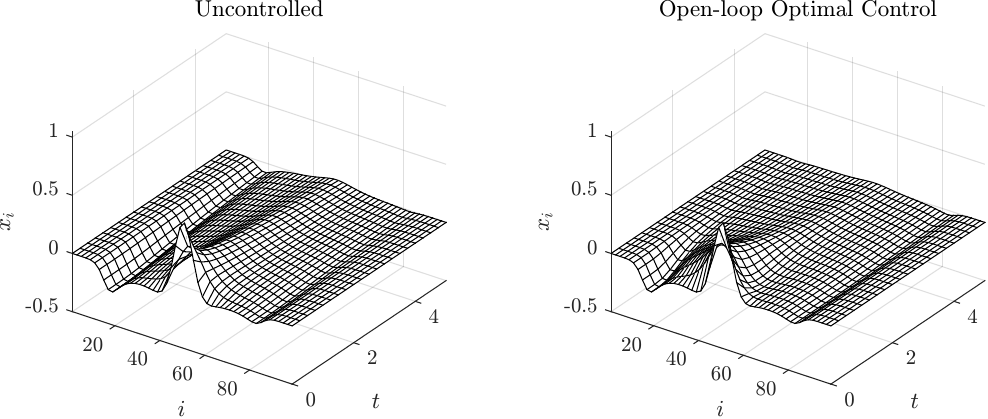}
    \caption{1D Fokker-Planck equation. Simulations of the uncontrolled and optimally controlled system to illustrate the system dynamics.
        The axes represent the individual state components $x_i$, the index $i$ for each state component, and time $t$.}
    \label{fig:example31_evolution}
\end{figure}

\Cref{fig:example31_controlSignals} compares the control signals computed in \cite{Breiten2018} with the Taylor approximants computed with the PPR method proposed in this work.
In the left plot, the signals $u_{b2}(t)$, $u_{b3}(t)$, $u_{b4}(t)$ are produced by the quadratic, cubic, and quartic control laws from \cite{Breiten2018} corresponding to quadratic, cubic, and quartic value function approximations.
On the right plot, $u_{1}(t)$, $u_{2}(t)$, $u_{3}(t)$ are linear\footnote{Note that a linear PPR controller is precisely the LQR controller.}, quadratic, and cubic PPR control laws corresponding to the same quadratic, cubic, and quartic value function approximations.
We also show on the left plot the controllers $u^{\text{PPR}}_{b2}(t)$, $u^{\text{PPR}}_{b3}(t)$, $u^{\text{PPR}}_{b4}(t)$ to illustrate that the PPR package can also compute the same control laws from \cite{Breiten2018} by simply adding the additional degree $d$ term contributed by the bilinear input term.
The blue, red, and green curves on the left therefore correspond exactly to the blue, red, and green curves on the right (representing Taylor approximants) with one additional term added to the control law.

\begin{figure}[htb]
    \centering
    \includegraphics[width=0.75\textwidth]{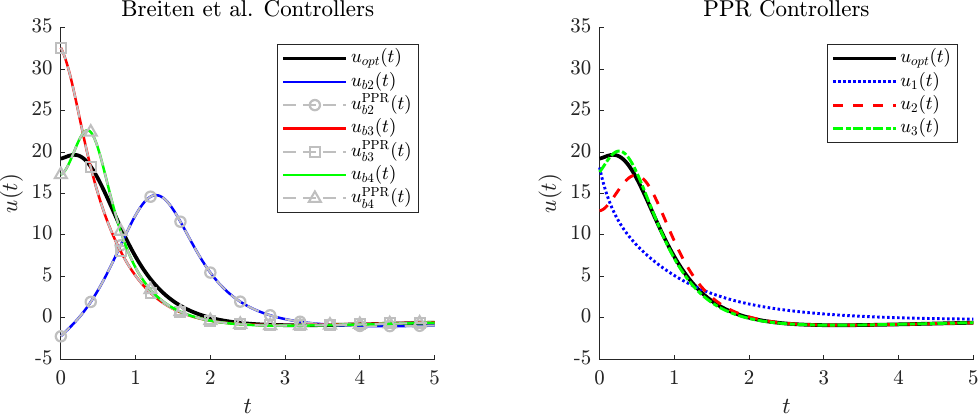}
    \caption{1D Fokker-Planck equation. Optimal control approximations using the method of \cite{Breiten2018} (cf. Fig. 3b in \cite{Breiten2018}) (left) versus the Taylor approximants computed using the PPR method proposed in this work (right). The gray lines on the left illustrate that the PPR package can be modified to reproduce the controllers from \cite{Breiten2018} by adding the additional term contributed by the bilinear input.}
    \label{fig:example31_controlSignals}
\end{figure}

It appears that the addition of the isolated degree $d$ term in each control law corresponding to the bilinear input term, \textit{without also adding the other degree $d$ terms from the Taylor approximation}, is not a straightforward improvement over a degree $d-1$ control law.
Indeed, $u_{b2}(t)$, which only requires the solution of the LQR Riccati equation, behaves dramatically differently than the related LQR controller; it in fact completely inverts the sign of the initial control response (incorrectly, as seen by the sign of the true optimal control response).
While the higher-order controllers from \cite{Breiten2018} improve, they still are less accurate than the simpler PPR controllers that simply omit the \textit{additional} bilinear input term.
We emphasize that the PPR control laws \textit{do include} contributions from the bilinear terms; they however only include them up to degree of the Taylor approximation, i.e. ensuring that all of the terms of a particular degree either are, or are not, included in the control law.
Comparing the blue quadratic PPR control $u_2(t)$ on the right with the blue $u_{b2}(t)$ on the left, we see that the additional quadratic terms corresponding to the next value function coefficient are necessary to ensure that the resulting control law produces the control signal with the correct sign, so while they do impose a significant computational complexity penalty to compute, they are nonetheless necessary if a quadratic feedback law is desired.

In \cref{ch3:tab:valueFunApproxs-fp1}, we compare the integrated costs for each controller with the uncontrolled cost and the optimal cost.
We see that in each case, the degree $d$ feedback law from \cite{Breiten2018} performs worse than both the degree $d$ and $d-1$ PPR controllers.
We conclude that, from the perspective of both computational complexity and control performance, it is ill-advised to augment an optimal control Taylor approximant with additional polynomial terms without careful consideration.
\begin{table}[htb]
    \centering
    \caption{1D Fokker-Planck equation. Integrated control costs computed up to $T =  20$.}
    \begin{tabular}{lc}
        \toprule
        Controller          & Integrated Cost \\
        \midrule
        $0$                 & 0.159878        \\
        $u_1(t)$            & 0.120474        \\
        $u_{b2}(t)$         & 0.129229        \\
        $u_2(t)$            & 0.115332        \\
        $u_{b3}(t)$         & 0.115957        \\
        $u_3(t)$            & 0.114724        \\
        $u_{b4}(t)$         & 0.115002        \\
        $u_{\text{opt}}(t)$ & 0.114710        \\
        \bottomrule
    \end{tabular}
    \label{ch3:tab:valueFunApproxs-fp1}
\end{table}

Next, we perform a convergence study to illustrate the behavior of the PPR feedback laws as the initial condition is scaled to get further from the origin.
\Cref{fig:example31_regionOfAccuracy} depicts the closed-loop cost gap, $J\left(\bx_0,\cdot\right) - J\left(\bx_0,u_{\text{opt}}(t)\right)$, as a measure of sub-optimality for each controller, as a function of the norm of the initial condition.

\pgfplotstableread[col sep=&, row sep=\\]{
    0.000000e+00      & 0.000000e+00     & 0.000000e+00     & 0.000000e+00     &        NaN       &        NaN       &        NaN       & 0.000000e+00     & 0.000000e+00     & 0.000000e+00     & 0.000000e+00\\
    1.650066e-01      & -1.967021e-06    & -2.046902e-06    & -2.047623e-06    & -1.570438e-03    & -1.634213e-03    & -1.634789e-03    & 1.252531e-03     & 1.250564e-03     & 1.250484e-03     & 1.250483e-03\\
    3.300133e-01      & -3.377168e-07    & -2.003685e-06    & -2.057088e-06    & -6.736440e-05    & -3.996752e-04    & -4.103276e-04    & 5.013283e-03     & 5.012945e-03     & 5.011279e-03     & 5.011226e-03\\
    4.950199e-01      & 1.030580e-05     & -1.083903e-06    & -1.726708e-06    & 9.142928e-04     & -9.615994e-05    & -1.531872e-04    & 1.127188e-02     & 1.128219e-02     & 1.127080e-02     & 1.127015e-02 \\
    6.600266e-01      & 5.050948e-05     & 2.529070e-06     & -1.217192e-06    & 2.528434e-03     & 1.266017e-04     & -6.093092e-05    & 1.997659e-02     & 2.002710e-02     & 1.997911e-02     & 1.997537e-02  \\
    8.250332e-01      & 1.656353e-04     & 1.371260e-05     & -8.373300e-07    & 5.338129e-03     & 4.419325e-04     & -2.698564e-05    & 3.102872e-02     & 3.119436e-02     & 3.104244e-02     & 3.102789e-02  \\
    9.900399e-01      & 4.384961e-04     & 4.326558e-05     & 3.814877e-07     & 9.904112e-03     & 9.772199e-04     & 8.616489e-06     & 4.427415e-02     & 4.471265e-02     & 4.431741e-02     & 4.427453e-02 \\
    1.155047e+00      & 9.912071e-04     & 1.054740e-04     & 2.741683e-06     & 1.665733e-02     & 1.772500e-03     & 4.607425e-05     & 5.950577e-02     & 6.049698e-02     & 5.961124e-02     & 5.950851e-02 \\
    1.320053e+00      & 1.972415e-03     & 2.137487e-04     & 5.496409e-06     & 2.578984e-02     & 2.794819e-03     & 7.186696e-05     & 7.648034e-02     & 7.845276e-02     & 7.669409e-02     & 7.648584e-02 \\
    1.485060e+00      & 3.526865e-03     & 3.796039e-04     & 7.516448e-06     & 3.714337e-02     & 3.997819e-03     & 7.915988e-05     & 9.495275e-02     & 9.847961e-02     & 9.533235e-02     & 9.496026e-02 \\
    1.650066e+00      & 5.764622e-03     & 6.221375e-04     & 1.452900e-05     & 5.025395e-02     & 5.423577e-03     & 1.266588e-04     & 1.147098e-01     & 1.204744e-01     & 1.153319e-01     & 1.147243e-01 \\
    1.815073e+00      & 8.748502e-03     & 9.870973e-04     & 5.083895e-05     & 6.452371e-02     & 7.280239e-03     & 3.749577e-04     & 1.355858e-01     & 1.443343e-01     & 1.365729e-01     & 1.356367e-01 \\
    1.980080e+00      & 1.249475e-02     & 1.548909e-03     & 1.616323e-04     & 7.935011e-02     & 9.836618e-03     & 1.026474e-03     & 1.574635e-01     & 1.699583e-01     & 1.590124e-01     & 1.576252e-01 \\
    2.145086e+00      & 1.698659e-02     & 2.396161e-03     & 4.093781e-04     & 9.423249e-02     & 1.329261e-02     & 2.271010e-03     & 1.802626e-01     & 1.972492e-01     & 1.826588e-01     & 1.806720e-01 \\
    2.310093e+00      & 2.218498e-02     & 3.616894e-03     & 8.632951e-04     & 1.087873e-01     & 1.773596e-02     & 4.233292e-03     & 2.039300e-01     & 2.261149e-01     & 2.075469e-01     & 2.047933e-01 \\
    2.475100e+00      & 2.803813e-02     & 5.285314e-03     & 1.598212e-03     & 1.227421e-01     & 2.313744e-02     & 6.996468e-03     & 2.284312e-01     & 2.564694e-01     & 2.337165e-01     & 2.300294e-01 \\
    2.640106e+00      & 3.448493e-02     & 7.462359e-03     & 2.687280e-03     & 1.359040e-01     & 2.940892e-02     & 1.059048e-02     & 2.537447e-01     & 2.882297e-01     & 2.612071e-01     & 2.564320e-01 \\
    2.805113e+00      & 4.147015e-02     & 1.020170e-02     & 4.204783e-03     & 1.481854e-01     & 3.645376e-02     & 1.502496e-02     & 2.798532e-01     & 3.213233e-01     & 2.900549e-01     & 2.840580e-01 \\
    2.970120e+00      & 4.893268e-02     & 1.354191e-02     & 6.213483e-03     & 1.595204e-01     & 4.414661e-02     & 2.025594e-02     & 3.067487e-01     & 3.556814e-01     & 3.202906e-01     & 3.129622e-01 \\
    3.135126e+00      & 5.681778e-02     & 1.752206e-02     & 8.763185e-03     & 1.698977e-01     & 5.239484e-02     & 2.620386e-02     & 3.344235e-01     & 3.912412e-01     & 3.519455e-01     & 3.431867e-01 \\
    3.300133e+00      & 6.507363e-02     & 2.217402e-02     & 1.187477e-02     & 1.793292e-01     & 6.110692e-02     & 3.272434e-02     & 3.628725e-01     & 4.279462e-01     & 3.850466e-01     & 3.747473e-01 \\
    3.465140e+00      & 7.365317e-02     & 2.752648e-02     & 1.553617e-02     & 1.878469e-01     & 7.020423e-02     & 3.962385e-02     & 3.920914e-01     & 4.657446e-01     & 4.196179e-01     & 4.076276e-01 \\
    3.630146e+00      & 8.251370e-02     & 3.360602e-02     & 1.970500e-02     & 1.954945e-01     & 7.962061e-02     & 4.668580e-02     & 4.220769e-01     & 5.045906e-01     & 4.556829e-01     & 4.417819e-01 \\
    3.795153e+00      & 9.161636e-02     & 4.043563e-02     & 2.432394e-02     & 2.023214e-01     & 8.929620e-02     & 5.371587e-02     & 4.528259e-01     & 5.444423e-01     & 4.932615e-01     & 4.771498e-01 \\
    3.960159e+00      & 1.008673e-01     & 4.798271e-02     & 2.928083e-02     & 2.082336e-01     & 9.905699e-02     & 6.044824e-02     & 4.843950e-01     & 5.852623e-01     & 5.323777e-01     & 5.136758e-01 \\
    4.125166e+00      & 1.104164e-01     & 5.645521e-02     & 3.472564e-02     & 2.137365e-01     & 1.092822e-01     & 6.721954e-02     & 5.166004e-01     & 6.270168e-01     & 5.730556e-01     & 5.513261e-01 \\
    4.290173e+00      & 1.200560e-01     & 6.569792e-02     & 4.048252e-02     & 2.184346e-01     & 1.195333e-01     & 7.365545e-02     & 5.496201e-01     & 6.696762e-01     & 6.153181e-01     & 5.901027e-01 \\
    4.455179e+00      & 1.298257e-01     & 7.581048e-02     & 4.665325e-02     & 2.225374e-01     & 1.299486e-01     & 7.996949e-02     & 5.833881e-01     & 7.132138e-01     & 6.591986e-01     & 6.300414e-01 \\
    4.620186e+00      & 1.397044e-01     & 8.683807e-02     & 5.333001e-02     & 2.260951e-01     & 1.405371e-01     & 8.630830e-02     & 6.179013e-01     & 7.576057e-01     & 7.047394e-01     & 6.712313e-01 \\
    4.785193e+00      & 1.496679e-01     & 9.882382e-02     & 6.063698e-02     & 2.291433e-01     & 1.513004e-01     & 9.283591e-02     & 6.531630e-01     & 8.028309e-01     & 7.519868e-01     & 7.138000e-01 \\
    4.950199e+00      & 1.597070e-01     & 1.118510e-01     & 6.876770e-02     & 2.317401e-01     & 1.622996e-01     & 9.978424e-02     & 6.891640e-01     & 8.488709e-01     & 8.010150e-01     & 7.579317e-01 \\
    5.115206e+00      & 1.698080e-01     & 1.260259e-01     & 7.795758e-02     & 2.339271e-01     & 1.736130e-01     & 1.073942e-01     & 7.259012e-01     & 8.957092e-01     & 8.519271e-01     & 8.038588e-01 \\
    5.280213e+00      & 1.799610e-01     & 1.415057e-01     & 8.848998e-02     & 2.357454e-01     & 1.853697e-01     & 1.159202e-01     & 7.633700e-01     & 9.433310e-01     & 9.048757e-01     & 8.518600e-01 \\
    5.445219e+00      & 1.901572e-01     & 1.585590e-01     & 1.006674e-01     & 2.372318e-01     & 1.978113e-01     & 1.255882e-01     & 8.015672e-01     & 9.917245e-01     & 9.601263e-01     & 9.022347e-01 \\
    5.610226e+00      & 2.003898e-01     & 1.776611e-01     & 1.147801e-01     & 2.384207e-01     & 2.113785e-01     & 1.365637e-01     & 8.404882e-01     & 1.040878e+00     & 1.018149e+00     & 9.552683e-01 \\
    5.775233e+00      & 2.106513e-01     & 1.998534e-01     & 1.310583e-01     & 2.393410e-01     & 2.270725e-01     & 1.489078e-01     & 8.801304e-01     & 1.090782e+00     & 1.079984e+00     & 1.011189e+00 \\
    5.940239e+00      & 2.209396e-01     & 2.283191e-01     & 1.496520e-01     & 2.400244e-01     & 2.480412e-01     & 1.625789e-01     & 9.204883e-01     & 1.141428e+00     & 1.148807e+00     & 1.070140e+00 \\
    6.105246e+00      & 2.312505e-01     & 3.006908e-01     & 1.706276e-01     & 2.404954e-01     & 3.127118e-01     & 1.774489e-01     & 9.615589e-01     & 1.192809e+00     & 1.262250e+00     & 1.132187e+00 \\
    6.270252e+00      & 2.415826e-01     & 4.185280e-01     & 1.939959e-01     & 2.407790e-01     & 4.171359e-01     & 1.933506e-01     & 1.003338e+00     & 1.244920e+00     & 1.421866e+00     & 1.197333e+00 \\
    6.435259e+00      & 2.519358e-01     & 4.248316e-01     & 2.197376e-01     & 2.408981e-01     & 4.062190e-01     & 2.101105e-01     & 1.045819e+00     & 1.297755e+00     & 1.470651e+00     & 1.265557e+00 \\
    6.600266e+00      & 2.623087e-01     & 4.381097e-01     & 2.478215e-01     & 2.408710e-01     & 4.023042e-01     & 2.275678e-01     & 1.089001e+00     & 1.351310e+00     & 1.527111e+00     & 1.336823e+00 \\
    6.765272e+00      & 2.727017e-01     & 4.546778e-01     & 2.782214e-01     & 2.407155e-01     & 4.013470e-01     & 2.455878e-01     & 1.132879e+00     & 1.405581e+00     & 1.587557e+00     & 1.411101e+00 \\
    6.930279e+00      & 2.831165e-01     & 4.734114e-01     & 3.109190e-01     & 2.404489e-01     & 4.020649e-01     & 2.640613e-01     & 1.177450e+00     & 1.460567e+00     & 1.650861e+00     & 1.488369e+00 \\
    7.095286e+00      & 2.935587e-01     & 4.938158e-01     & 3.458721e-01     & 2.400896e-01     & 4.038716e-01     & 2.828746e-01     & 1.222705e+00     & 1.516264e+00     & 1.716521e+00     & 1.568577e+00 \\
    7.260292e+00      & 3.040267e-01     & 5.156260e-01     & 3.829884e-01     & 2.396469e-01     & 4.064385e-01     & 3.018879e-01     & 1.268644e+00     & 1.572671e+00     & 1.784271e+00     & 1.651633e+00 \\
    7.425299e+00      & 3.145255e-01     & 5.386908e-01     & 4.221380e-01     & 2.391351e-01     & 4.095690e-01     & 3.209534e-01     & 1.315263e+00     & 1.629788e+00     & 1.853954e+00     & 1.737401e+00 \\
    7.590306e+00      & 3.250578e-01     & 5.629136e-01     & 4.632222e-01     & 2.385647e-01     & 4.131306e-01     & 3.399657e-01     & 1.362556e+00     & 1.687614e+00     & 1.925470e+00     & 1.825778e+00 \\
    7.755312e+00      & 3.356291e-01     & 5.882282e-01     & 5.062689e-01     & 2.379472e-01     & 4.170296e-01     & 3.589239e-01     & 1.410519e+00     & 1.746148e+00     & 1.998747e+00     & 1.916788e+00 \\
    7.920319e+00      & 3.462426e-01     & 6.145918e-01     & 5.514979e-01     & 2.372908e-01     & 4.211989e-01     & 3.779587e-01     & 1.459149e+00     & 1.805391e+00     & 2.073741e+00     & 2.010647e+00 \\
    8.085326e+00      & 3.569034e-01     & 6.419720e-01     & 5.992915e-01     & 2.366042e-01     & 4.255866e-01     & 3.972922e-01     & 1.508440e+00     & 1.865344e+00     & 2.150412e+00     & 2.107732e+00 \\
    8.250332e+00      & 3.676169e-01     & 6.703455e-01     & 6.500709e-01     & 2.358954e-01     & 4.301528e-01     & 4.171428e-01     & 1.558389e+00     & 1.926006e+00     & 2.228735e+00     & 2.208460e+00 \\
}\exampleThirtyOneRegionOfAccuracyData

\pgfdeclarelayer{background}
\pgfsetlayers{background,main}
\begin{figure}[htb]
    \centering
    \begin{tikzpicture}[]
        \begin{axis}[
                width = 0.65\columnwidth, height = 5cm,
                grid,
                xmin=0, xmax=8.5,
                ymin=1e-6, ymax=0.75,
                xlabel={Initial condition norm, $\vert\vert \bx_0 \vert\vert$},
                ylabel={Cost gap,\\ $J\left(\bx_0,\cdot\right) - J\left(\bx_0,u_{\text{opt}}\right)$},
                clip mode=individual, 
                legend pos=north west,
                legend style={draw=none,cells={align=center},font=\scriptsize\sffamily},
                xlabel style={font=\small\sffamily},
                ylabel style={font=\small\sffamily,align=center},
                ylabel shift = -3 pt,
            ]
            \addplot
            [color=Set1-A, thick, dashed, dash pattern=on 4pt off 1pt on 1pt off 1pt]
            table [x index=0, y index=1, col sep=&] {\exampleThirtyOneRegionOfAccuracyData};
            \addlegendentry{$u_1(t)$}
            \addplot
            [color=Set1-E, thick, dashed, dash pattern=on 4pt off 1pt on 1pt off 1pt on 1pt off 1pt]
            table [x index=0, y index=2, col sep=&] {\exampleThirtyOneRegionOfAccuracyData};
            \addlegendentry{$u_2(t)$}
            \addplot
            [color=Set1-D, thick, dashed, dash pattern=on 4pt off 1pt on 1pt off 1pt on 1pt off 1pt on 1pt off 1pt]
            table [x index=0, y index=3, col sep=&] {\exampleThirtyOneRegionOfAccuracyData};
            \addlegendentry{$u_3(t)$}
        \end{axis}
    \end{tikzpicture}
    \caption{1D Fokker-Planck equation. Growth of sub-optimality of the PPR controllers as initial condition distance to the origin increases. The kink in the $u_2(t)$ curve corresponds to a point where the control response to the initial condition inverts. The polynomial nature of the control laws causes higher-order variants to accumulate error at a faster rate further from the origin.}
    \label{fig:example31_regionOfAccuracy}
\end{figure}

The results of the convergence plot are consistent with what is expected of a Taylor approximation.
Near the origin, all of the controllers perform well.
For slightly further initial conditions, the linear feedback law begins to show sub-optimal performance, and adding additional polynomial degrees brings the feedback law closer to the true optimal solution.
However, the reponse of the polynomial controllers grows quickly outside of the local region near the equilibrium point once they begin to dominate.
For sufficiently large initial conditions, we exit the region of convergence of the Taylor approximation, and the polynomial behavior of the higher-order control laws causes them to diverge even more quickly than the linear feedback law.
This confirms that the proposed method, despite offering some improvements over linear feedback, is still primarily a local method, and outside of a local region, it is not even guaranteed that the polynomial feedback laws are stabilizing.

Finally, we perform convergence and scalability studies with respect to the number of discretization points $n$ in the Fokker-Planck mesh.
In \cref{sfig:example31_convergence_n}, we show the convergence of the value function at an initial condition $\bx_0$ as $n$ is increased, and in \cref{sfig:example31_cpuScaling_n}, we show the CPU time scaling for both the PPR method and the method of \cite{Breiten2018}.
In \cref{sfig:example31_convergence_n}, we see that, as expected, refining the mesh leads to convergence of the value function evaluation.
For this problem, the mesh does not converge until $n \approx 1000$, and for this initial condition, at least a degree 4 approximation would be required.
In \cref{sfig:example31_cpuScaling_n}, the scaling of both methods is shown to be approximately $O(n^{d+1})$, which is consistent with the complexity analysis in \cref{ch3:sec:complexity}.
For both the degree $d=3$ and $d=4$ cases, the Bartels-Stewart solver used in PPR is shown to be faster than the quadrature-based solver used in \cite{Breiten2018}.
For the $n=1023$ degree 3 case, the method in \cite{Breiten2018} took roughly 54 minutes to compute, whereas the PPR method completed in about 9 minutes.
The exception occurs at small $n$, where the quadrature-based solver appears to have lower overhead and is able to extend to lower computational times than the Bartels-Stewart solver.
Since the value function approximations computed by both methods are identical, either solver could be used, but in this case the Bartels-Stewart solver is generally preferred.

\pgfplotstableread[col sep=&, row sep=\\]{
    n &        n^3 &   CPU-sec (PPR)   &   CPU-sec (Breiten)   &      V(x0)    \\
    15 &       3375 &    2.72373e-02    &    6.07914e-03    &    2.75018e-02    \\
    31 &      29791 &    3.21858e-02    &    2.93176e-02    &    2.92917e-02    \\
    63 &     250047 &    8.66296e-02    &    3.35754e-01    &    3.10647e-02    \\
    127 &    2048383 &    4.91268e-01    &    2.11790e+00    &    3.23060e-02    \\
    255 &   16581375 &    4.90645e+00    &    2.45793e+01    &    3.29795e-02    \\
    511 &  133432831 &    3.92980e+01    &    2.45743e+02    &    3.33338e-02    \\
    1023 & 1070599167 &    5.21083e+02    &    3.23936e+03    &    3.35151e-02    \\
}\exampleThirtyOneConvergenceDatadthreeData
\pgfplotstableread[col sep=&, row sep=\\]{
    n &        n^4 &   CPU-sec (PPR)   &   CPU-sec (Breiten)   &      V(x0)      \\
    15 &      50625 &    1.23061e-01    &    7.15473e-02        &    2.71064e-02   \\
    31 &     923521 &    2.30783e-01    &    1.19097e+00        &    2.87038e-02   \\
    63 &   15752961 &    3.16591e+00    &    1.73396e+01        &    3.03023e-02   \\
    127 &  260144641 &    5.82740e+01    &    3.09185e+02        &    3.14139e-02   \\
}\exampleThirtyOneConvergenceDatadfourData

\begin{figure}[htb]
    \centering
    \begin{subfigure}[h]{0.45\textwidth}
        \centering
        \begin{tikzpicture}
            \begin{semilogxaxis}[xlabel=$n$,
                    ylabel=$V(\mathbf{x}_0)$,
                    width=\columnwidth, height=4.5cm,
                    xmin=1e1, xmax=1.2e3,
                    ylabel near ticks,
                    legend pos=north west,
                    legend style={draw=none,font=\scriptsize\sffamily},
                    xlabel style={font=\small\sffamily},
                    ylabel style={font=\small\sffamily},
                ]

                \addplot[color=Paired-C,mark=*,mark size=2] table[x={n}, y={V(x0) },col sep=&] {\exampleThirtyOneConvergenceDatadthreeData};
                \addlegendentry{degree 3}

                \addplot[color=Paired-G,mark=star,mark size=2] table[x={n}, y={V(x0)},col sep=&] {\exampleThirtyOneConvergenceDatadfourData};
                \addlegendentry{degree 4}
            \end{semilogxaxis}
        \end{tikzpicture}
        \caption{Solution convergence.}
        \label{sfig:example31_convergence_n}
    \end{subfigure}
    \hfill
    \begin{subfigure}[h]{0.525\textwidth}
        \centering
        \begin{tikzpicture}
            \begin{loglogaxis}[xlabel=$n$,
                    ylabel=CPU sec,
                    width=\columnwidth, height=4.5cm,
                    xmin=5, xmax=1.2e3,
                    ymin=1e-3, ymax=1e4,
                    legend pos=north west,
                    legend style={draw=none,font=\scriptsize\sffamily},
                    xlabel style={font=\small\sffamily},
                    ylabel style={font=\small\sffamily},
                ]
                \addplot[domain=5:2e3, samples=10, color=lightgray, thick, forget plot] {6*10^(-8) * x^3} node [pos=.8, fill=white, sloped] {$n^{3}$};
                \addplot[domain=5:6e2, samples=10, color=lightgray, thick, forget plot] {6*10^(-8) * x^4} node [pos=.8, fill=white, sloped] {$n^{4}$};
                \addplot[domain=5:6e2, samples=10, color=lightgray, thick, forget plot] {10^(-7) * x^5} node [pos=.6, fill=white, sloped] {$n^{5}$};

                \addplot[color=Paired-A,mark=o,mark size=2] table[x={n}, y={CPU-sec (PPR)},col sep=&] {\exampleThirtyOneConvergenceDatadthreeData};
                \addlegendentry{$V_3(\bx)$ PPR}

                \addplot[color=Paired-B,mark=*,mark size=2, dashed] table[x={n}, y={CPU-sec (Breiten)},col sep=&] {\exampleThirtyOneConvergenceDatadthreeData};
                \addlegendentry{$V_3(\bx)$ Breiten}

                \addplot[color=Paired-I,mark=square,mark size=2] table[x={n}, y={CPU-sec (PPR)},col sep=&] {\exampleThirtyOneConvergenceDatadfourData};
                \addlegendentry{$V_4(\bx)$ PPR}

                \addplot[color=Paired-J,mark=square*,mark size=2, dashed] table[x={n}, y={CPU-sec (Breiten)},col sep=&] {\exampleThirtyOneConvergenceDatadfourData};
                \addlegendentry{$V_4(\bx)$ Breiten}
            \end{loglogaxis}
        \end{tikzpicture}
        \caption{Computational scalability.}
        \label{sfig:example31_cpuScaling_n}
    \end{subfigure}
    \caption{Convergence of the value function (left) and CPU time scaling (right) w.r.t. $n$ as the Fokker-Planck mesh is refined.}
    \label{fig:example31}
\end{figure}

\section{High-dimensional semi-discretized Allen-Cahn equation}\label{ch3:sec:allen-cahn}
The Allen-Cahn equation \cite{Allen1979}, which models phase separation in multiphase materials, such as those used in powder-bed fusion additive manufacturing \cite{Yang2019,Li2018,Ali2021,Darabi2023},
is a common benchmark for evaluating HJB-based nonlinear control techniques \cite{Alla2020,Dolgov2021}.
The model takes the form of a nonlinear heat equation with a cubic nonlinearity.
Also known as a bistable equation, it features three constant steady-state solutions, typically at 1, 0, and -1, with the solutions at 1 and -1 being stable and 0 being unstable.
In \cref{ch3:sec:allen-cahn-1}, we compare the performance of the proposed Polynomial-Polynomial Regulator (PPR) controllers with several state-of-the-art methods, including LQR, State-Dependent Riccati Equation (SDRE), and Tensor-Train HJB (TT-HJB), for the Allen-Cahn equation on a one-dimensional spatial domain.
To compare with the existing literature \cite{Dolgov2021}, we take the same Chebychev pseudospectral discretization with $n=14$.
Then in \cref{ch3:sec:allen-cahn-2D}, we consider finite element discretization of the Allen-Cahn equation in two spatial dimensions, leading to high-dimensional but sparse discretizations on the order of $n=O(100{,}000)$ that illustrate the scalability of Al'brekht's method when paired with the proposed dimensionality reduction technique.

\subsection{Distributed control of the 1D Allen-Cahn equation}\label{ch3:sec:allen-cahn-1}
\noindent \paragraph{Model description} The control problem defined in \citep{Dolgov2021} concerns the Allen-Cahn equation
\begin{align}
    \frac{\partial w(z,t)}{\partial t} & = \epsilon \frac{\partial^2 w(z,t)}{\partial z^2} + w(z,t) - w(z,t)^3
    + \chi_\omega(z) u(t)
\end{align}
for $t > 0$ and $z \in \Omega = [-1,1]$, subject to the Neumann boundary conditions $\left. \partial w(z,t)/\partial z\right\vert_{z=-1} = 0$ and $\left. \partial w(z,t)/\partial z\right\vert_{z=1} = 0$
and initial condition $w(z,0) = w_0(z)$.
A single input $u(t)$ acts on the subdomain $\omega = \left[-0.5, 0.2\right]$ via the characteristic function $\chi_\omega(z)$, as depicted in \cref{ch3:fig:cheb-nodes-0}.
\begin{figure}[htb]
    \centering
    \includegraphics[page=5,width=0.9\textwidth]{example9.pdf}
    \caption{1D Allen-Cahn equation. Spatial domain discretized with 14 Chebychev nodes, which are denser near the boundaries.}
    \label{ch3:fig:cheb-nodes-0}
\end{figure}

This system has three constant steady-state solutions:
$w(z,t) = -1$ and $w(z,t) = 1$, which are stable, and
$w(z,t) = 0$, which is unstable.
The objective defined in \citep{Dolgov2021} is to stabilize the unstable equilibrium $w(z,t) = 0$ subject to the performance index
\begin{align}
    \cJ(w_0,u) = \frac{1}{2}  \int_0^\infty \left\Vert w(z,t)\right\Vert^2_{L_2(-1,1)} + \gamma u(t)^2 \,\rd t.
\end{align}
\noindent \paragraph{Discretization} Using a Chebychev pseudospectral spatial discretization of dimension $n=14$ as in \cite{Dolgov2021}, the model takes the form
\begin{align}
    \dot{\bx}(t) & = \bA \bx(t) + \bF_3 \kronF{\bx(t)}{3} + \bB u(t), \label{ch3:eq:allen-cahn-1}
\end{align}
and the associated discretized performance index is
\begin{align}
    J(\bx_0, \bu) = \frac{1}{2} \int_0^\infty \bx(t)^\top \bQ \bx(t)  +  \gamma u(t)^2 \,\rd t, \label{ch3:eq:allen-cahn-neumann-cost}
\end{align}
where $\bQ$ is a diagonal weighting matrix arising from the Chebychev discretization
and $\gamma = 0.1$.
The values for $\bQ$ and $\gamma$ are taken from \cite{Dolgov2021}.

In \citep{Dolgov2021}, the chosen diffusion coefficient is $\epsilon = 0.2$.
However, the closed-loop simulation code supplied with \citep{Dolgov2021} features a typo where the input matrix $\bB$ is missing for the TT-HJB simulation; as a result, the control input for TT-HJB was applied over the entire domain rather than the subdomain $\chi_\omega(z)$ depicted in \cref{ch3:fig:cheb-nodes-0}.
Upon applying the control only to that subdomain, the method presented therein fails to stabilize the system for $\epsilon = 0.2$.
We have adopted a higher diffusion coefficient $\epsilon = 0.5$ in this work.
We construct an initial condition based on that used in \citep{Dolgov2021} as
\begin{align*}
    w_0(z)
    =\cos(2\pi z)\cos(\pi z)  + w_\text{offset},
\end{align*}
with $w_\text{offset}=0.5,1,1.5,$ or $2$.
The offset parameter allows us to move the initial condition closer or further from the equilibrium at the origin.
The initial condition $\bx_0$ of the finite-dimensional dynamics is then given by the discretization of the PDE initial condition $w_0(z)$.

\noindent \paragraph{Control design} We solve the optimal control problem with performance index \labelcref{ch3:eq:allen-cahn-neumann-cost} for system \labelcref{ch3:eq:allen-cahn-1} and compare with three prevalent methods: the Linear-Quadratic Regulator (LQR), State-Dependent Riccati Equation (SDRE), and Tensor-Train HJB (TT-HJB) of \citep{Dolgov2021}.
Then we compute four variants of our PPR controller.
The first is a plain PPR controller of degree 5, corresponding to a degree 6 value function approximation, with no dimensionality reduction applied.
The degree 6 value function coefficient requires solving a linear system with over $14^6 \approx 7.5$ million unknowns, so our efficient implementation is key for enabling the computation of this type of control law.
The PPR controller of degree 3 was found to not stabilize initial conditions with offsets larger than 1, likely because the associated degree 4 value function approximation becomes negative.
The second controller is a degree~5 PPR controller featuring the acceleration due to dimensionality reduction proposed in \cref{ch3:sec:reduced-Albrekht}, illustrating that close to the full-order performance can be retained with significant speed-up in computations.
The third controller is a ``tuned'' degree 3 PPR controller, in which we manipulate the higher-order cost parameters $\bq_p$ and $\br_p$ in \cref{ch3:eq:performanceIndex-Poly} in order to improve the performance of the controllers far from the origin, i.e. for larger offsets $w_\text{offset}$.
For the fourth and final controller,
dimensionality reduction is applied to the tuned degree 3 controller, again illustrating that our acceleration step can boost computational performance while retaining accuracy.
For the latter two controllers, the artificial cost for computing the controllers is chosen by trial and error as
\begin{align*}
    J_\text{tuned}(\bx_0, \bu) = \frac{1}{2} \int_0^\infty \bx(t)^\top \bQ \bx(t) + \gamma u(t)^2 + 0.25 \sum_{i=1}^n x_i(t)^4 + 0.009 \sum_{i=1}^n x_i(t) u(t)^2\,\rd t.
\end{align*}
The additional quartic terms in the state penalty are introduced in order to manipulate and make the quartic term in the value function positive.
The additional control penalty term with linear state dependence is introduced to moderate the control action: it was observed that while the standard PPR controller performs better than LQR locally, it overcompensates for initial conditions further from the origin, resulting in larger control costs.
Hence control penalty terms of the form $x_i(t) u(t)^2$, which increase as the state deviates from the origin, are added to lower the control input applied by the tuned PPR controller far from the origin.
We emphasize that this artificial cost is only used to provide the $\bq_p$ and $\br_p$ parameters as inputs to the PPR algorithm in \cref{ch3:alg:full-PPR};
the performance for all of the controllers is evaluated with the same cost given in \cref{ch3:eq:allen-cahn-neumann-cost}.
These extra parameters can be interpreted as additional design parameters to tune the behavior of the closed-loop system.
It is standard practice in control design to choose the cost function parameters to yield the desired performance.

We briefly remark that
since the value function should be a positive semidefinite function, and since odd polynomials are guaranteed to become negative,
we generally compute even-degree value function approximations corresponding to odd-degree feedback law approximations.
Hence we tend to compute linear, cubic, quintic, etc. feedback laws, although
in principle nothing prohibits the computation of even-degree feedback laws and odd-degree value function approximations.
Even-degree polynomials can (and often do) also become negative, but at least they are not guaranteed to do so.

\noindent \paragraph{Results} The integrated control cost results are presented in \cref{ch3:tab:valueFunApproxs-ac1}.
For all of the controllers except the TT-HJB controller, the cost integrals converge in about 10 seconds; we extended the simulations up to $T=10^7$ seconds to verify convergence.
Even with the larger value of the diffusion coefficient $\epsilon = 0.5$, we found that the controller computed using the TT-HJB method fails to fully stabilize the solution of interest.
Therefore, the cost function integral fails to converge to a finite value, and continuing to integrate for longer horizons results in a cost that continues to increase, as shown in \cref{ch3:fig:example9_runningCosts}.
Next, we observe the performance of the LQR and SDRE methods.
These both stabilize the equilibrium, and by re-linearizing and recomputing the solution to the Riccati equation at each time step, the SDRE approach achieves even lower control costs at the expense of increased computational cost.
Disregarding its high online computational cost, the SDRE approach produces the best control actions for every initial condition considered for this example.

\begin{table}[htb]
    \centering
    \caption{1D Allen-Cahn equation. Integrated control costs computed up to $T =  10^7$ for different initial condition offsets.}
    \begin{tabular}{lcccc}
        \toprule
        Controller                     & $w_\text{offset}=0.5$ & $w_\text{offset}=1.0$ & $w_\text{offset}=1.5$ & $w_\text{offset}=2.0$ \\
        \midrule
        Uncontrolled                   & $2\cdot 10^7$         & $2\cdot 10^7$         & $2\cdot 10^7$         & $2\cdot 10^7$         \\[1pt]
        TT-HJB                         & 422.806               & 424.408               & 425.522               & 426.270               \\[1pt]
        LQR                            & 0.678                 & 2.137                 & 3.787                 & 5.400                 \\[1pt]
        SDRE                           & 0.667                 & 1.928                 & 2.995                 & 3.741                 \\[1pt]
        Degree 5 PPR                   & 0.667                 & 2.026                 & 6.569                 & 35.376                \\[1pt]
        Degree 5 PPR, reduced          & 0.667                 & 2.018                 & 6.720                 & 37.394                \\[2pt]
        Degree 3 PPR, tuned            & 0.668                 & 1.932                 & 3.010                 & 3.806                 \\[2pt]
        Degree 3 PPR, tuned \& reduced & 0.667                 & 1.942                 & 3.114                 & 4.645                 \\
        \bottomrule
    \end{tabular}
    \label{ch3:tab:valueFunApproxs-ac1}
\end{table}

\begin{figure}[htb]
    \centering
    \includegraphics[width=0.65\textwidth]{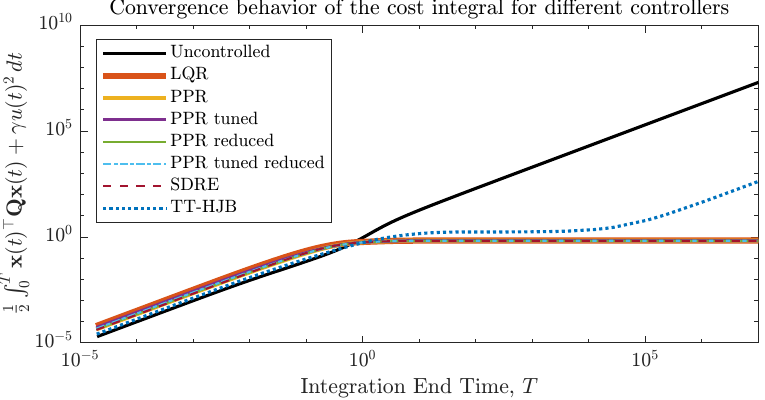}
    \caption{1D Allen-Cahn equation. Finite-time approximations of the cost integral $J(\bx_0, \bu) \approx \frac{1}{2} \int_0^T \bx(t)^\top \bQ \bx(t)  +  \gamma u(t)^2 \,\rd t$ as the simulation time $T$ increases for simulations with the initial condition closest to the origin, corresponding to $w_\text{offset} = 0.5$. For the infinite horizon stabilization problem, a successful control law should drive the cost integral to a finite value, with the optimal controller having the smallest cost. The LQR, SDRE, and PPR controllers are all able to asymptotically stabilize the equilibrium and achieve a finite control cost that converges to a finite value in about 10 seconds, whereas the TT-HJB diverges and is not able to asymptotically stabilize the equilibrium.}
    \label{ch3:fig:example9_runningCosts}
\end{figure}

Continuing with the analysis of the control costs presented in \cref{ch3:tab:valueFunApproxs-ac1},
the first of the PPR controllers presented is a full-order PPR controller of degree 5 (i.e. a polynomial controller featuring terms up to degree 5).
We see that the controller performs slightly better than the LQR controller for initial conditions close to the equilibrium.
This comports with the theory underlying the method, as it is guaranteed that adding higher-order terms to the Taylor approximation of the optimal feedback law will improve the accuracy, and therefore optimality, locally near the equilibrium point.
However, as we depart from the equilibrium point and consider initial conditions further away, we expect the accuracy to degrade, and if we depart the region of convergence of the Taylor approximation, we can even expect the polynomial approximation to be worse than the LQR solution.
For an initial condition with $w_\text{offset}=1.5$, the degree 5 PPR controller performs worse than LQR by a factor of 2, and extending the initial condition slightly further to $w_\text{offset}=2.0$, the performance rapidly degrades further to a factor of 7.
The next PPR controller presented is a reduced-order approximation of the degree 5 controller.
The first term in the feedback law is still the full-order LQR gain, but the higher-order terms are approximated in a reduced dimension.
By trial and error, the reduced-order dimension is chosen to be $r=3$ to give a satisfactory balance between performance and computational cost.
This is sufficient to obtain similar performance to the full-order solution computed in $n=14$ dimensions for this problem.

The final two controllers are designed using additional higher-order penalties in the cost function to improve the controller performance further from the equilibrium point at the expense of local optimality.
This is an additional design freedom that is not awarded to the other methods; the comparison here is included to illustrate that this design freedom is readily available in the PPR method, whereas other methods may or may not not easily permit such modification of the nonlinear behavior of the controllers.
The tuned PPR controllers presented are of degree 3, and whereas the true degree 3 Taylor expansion of the optimal feedback law is not stabilizing for all of the initial conditions of interest,
the additional penalties in the cost function produce controllers that are.
Consequently, it is important to note that these modified controllers are no longer Taylor approximants of the optimal feedback law.
With the additional design flexibility afforded by the higher-order cost terms,
we are able to
reduce the control costs for $w_\text{offset}=1.5$ and $w_\text{offset}=2.0$ and improve the controller performance.
The resulting controllers are superior to the LQR controller, but still not as good as the SDRE controller.

In \cref{ch3:fig:example9_neumann_v02}, the first 5 seconds of the closed-loop simulations are shown for each of the controllers starting from the initial condition corresponding to $w_\text{offset}=2.0$.
In the simulations, it is clear that the degree 5 PPR and degree 5 PPR reduced controllers both act too aggressively, with a major dip in the response that drives it below 0, leading to their poor performance costs.
Meanwhile, the tuned PPR controllers behave more closely to the SDRE controller, which offers the best performance of those tested.
In the TT-HJB simulation plot, the higher residual state error due to the solution failing to converge to the desired steady-state quickly enough is also visible.
This residual error persists even over long time horizons, leading to the continued accumulation of the cost function as seen in \cref{ch3:tab:valueFunApproxs-ac1}.
\begin{figure}[htb]
    \centering
    \includegraphics[width=\textwidth]{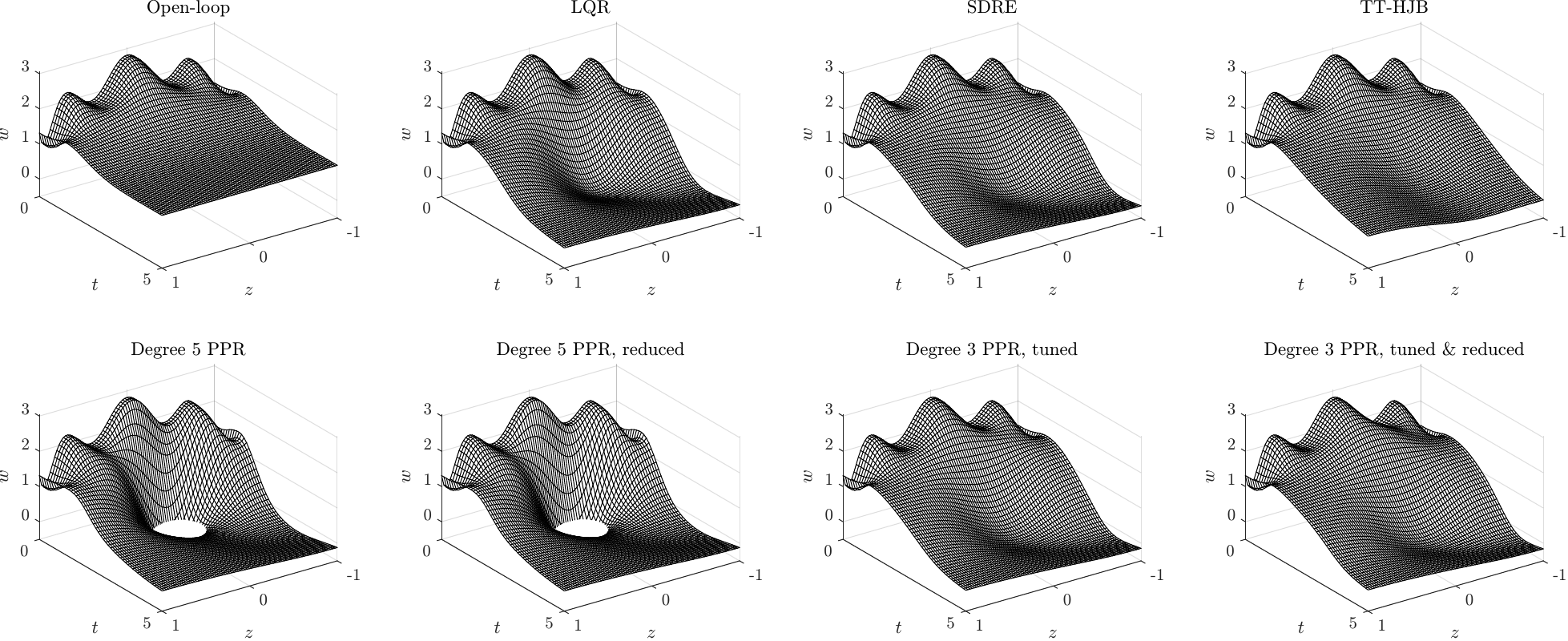}
    \caption{1D Allen-Cahn equation. Simulations for the initial condition corresponding to $w_\text{offset} = 2.0$.}
    \label{ch3:fig:example9_neumann_v02}
\end{figure}

\noindent \paragraph{Comparison of computational and implementation complexity}
We discuss the complexity of the various approaches from a computational and implementation standpoint.
In \cref{ch3:tab:cpu-time-ac1}, we present the offline time required to compute each controller, the time associated with evaluating each controller online, and an estimate of the RAM required to store the quantities necessary for online evaluation of each controller.
\textsc{Matlab}'s \texttt{lqr()} represents the standard, as it is straightforward to use and can be computed offline for the $n=14$ dimensional linearization in less than 0.01 seconds.
The online implementation of the LQR controller only requires storing the gain matrix of dimension $n \times m$, and online evaluation is performed very efficiently via matrix multiplication.
While the SDRE controller provides superior performance, it cannot be computed offline, instead requiring the solution of a Riccati equation at every time step.
This adds considerable online computational cost, although the online memory requirements are not prohibitively high.
While there are more sophisticated SDRE-based approaches which move some of the computations into the offline phase \cite{Alla2023,Heiland2023}, the implementation becomes more complex, and there does not appear to be a widely available package that gives a simple, scalable SDRE function for general purpose use.

The TT-HJB controller can be computed via the open-source repository provided in \cite{Dolgov2023}.
The computations behind the controller are complex, but the authors do provide high-level functions for computing and evaluating the control law.
Nonetheless, there are many initialization parameters, stopping criteria, and shifting parameters that need to be carefully chosen.
The control law is computed in an iterative fashion, and the code reaches its stopping criteria in about 36 seconds, though it does not appear to converge properly to a stabilizing controller (see \cref{ch3:fig:example9_runningCosts}).
Online evaluation of the controller is moderately expensive, though not nearly as expensive as the SDRE controller, and the memory required to store the controller is much higher than LQR or SDRE.

We have aimed to make the PPR controllers easy to compute and implement for general purpose use, with the \texttt{ppr()} function usage mirroring \textsc{Matlab}'s \texttt{lqr()} syntax, allowing the user to avoid the tedious computations behind the method.
The most challenging step is to derive the Kronecker-product form for the dynamics in \cref{ch3:eq:FOM-Poly}, which is admittedly not a commonly used form.
We provide a utility function \texttt{approxPolynomialDynamics()} that permits the user to supply a symbolic expression for analytic dynamics in the form of $\bf(\bx)$ and $\bg(\bx)$ to obtain the Kronecker coefficients for the dynamics.
We reiterate that this is simply a notational form for polynomials, so any model featuring analytic nonlinearities, or indeed any model that is already in polynomial form, can be written in this way.
For larger models though, where symbolic computations are inefficient, or for semi-discretized models, some additional effort is required to derive the explicit Kronecker coefficients.
The computation of the PPR controllers is fast: for this problem, the full-order degree~5 PPR controller is computed in less than 4 seconds, and the full-order tuned degree 3 PPR controller is computed in less than 0.1 seconds.
The reduced PPR controllers computed using dimensionality reduction are computed in less than 0.01 seconds.
Importantly, since the computations are not iterative, no initialization or stopping parameters are required.
The online computational time and storage requirements for the PPR controllers without reduction scale with the polynomial degree.
Thus, the degree~5 PPR controller requires the most memory for this problem at over 4 MB, and the controller evaluation time is almost as high as the SDRE method, due to the number of operations required to evaluate the degree 5 multivariate polynomial online.
By tuning the controller cost parameters though, the degree~3 tuned PPR controller can be made to stabilize the system with the desired performance and with online requirements more in line with the LQR controller.

\begin{table}[htb]
    \centering
    \caption{Comparison of the complexity of computing, evaluating, and storing each control law.}
    \begin{tabular}{lccc}
        \toprule
        Controller                     & \makecell{Controller                 \\ Computation Time (s)} & \makecell{Controller Evaluation Time            \\(s per $10{,}000$ evaluations)} & \makecell{Approximate Online \\ RAM Requirement (KB)} \\
        \midrule
        LQR                            & $< 0.01$             & 0.087  & 0.1  \\[1pt]
        SDRE                           & ---                  & 11.616 & 1.7  \\[1pt]
        TT-HJB                         & $36$                 & 4.271  & 166  \\[1pt]
        Degree 5 PPR                   & $4$                  & 10.694 & 4634 \\[1pt]
        Degree 5 PPR, reduced          & $< 0.01$             & 0.139  & 3.8  \\[2pt]
        Degree 3 PPR, tuned            & $< 0.1$              & 0.180  & 24   \\[2pt]
        Degree 3 PPR, tuned \& reduced & $< 0.01$             & 0.107  & 1.1  \\
        \bottomrule
    \end{tabular}
    \label{ch3:tab:cpu-time-ac1}
\end{table}

\subsection{Boundary control of the 2D Allen-Cahn equation}\label{ch3:sec:allen-cahn-2D}
\noindent \paragraph{Model description} We illustrate the scalability of the proposed approach on the 2D version of the Allen-Cahn problem.
The governing PDE and boundary conditions are
\begin{equation}
    \begin{split}
        \frac{\partial w(z_1,z_2,t)}{\partial t} & = \epsilon \left(\frac{\partial^2 w(z_1,z_2,t)}{\partial z_1^2} + \frac{\partial^2 w(z_1,z_2,t)}{\partial z_2^2}\right)  + w(z_1,z_2,t) - w(z_1,z_2,t)^3, \\
        \partial_{z_1} w(0,z_2,t)                & =\partial_{z_1} w(1,z_2,t)=\partial_{z_2} w(z_1,1,t)=0,                                                                                                   \\
        \partial_{z_2} w(z_1,0,t)                & =\sin(\pi z_1) u(t),
    \end{split}
\end{equation}
where $w:\real^2 \times \real^{+} \to \real$ is the dependent variable,
$t > 0$, and the square spatial domain is parametrized by $\bz \in \Omega = [0,1]^2$.
We select $\epsilon = 0.25$ as the diffusion coefficient.
The model is subject to homogeneous Neumann boundary conditions
on three edges,
and the control input is applied as a Neumann boundary control on the remaining edge.
Just as in the 1D case,
the system has two stable constant steady-state solutions at
$w = -1$ and $w = 1$, and an unstable constant steady-state solution $w = 0$ which we wish to stabilize.

\noindent \paragraph{Discretization} The spatial domain is discretized using bilinear rectangular finite elements \cite{Ragab2017,Reddy2004}.
Each edge is discretized with
320 elements,
leading to a model with $n=103{,}041$ degrees of freedom that can be written as
\begin{align}
    \bE \dot{\bx}(t) & = \bA \bx(t) + \bF_3 \kronF{\bx(t)}{3} + \bB u(t). \label{ch3:eq:allen-cahn-2}
\end{align}
Unlike the previous models, this model features a non-identity mass matrix $\bE$ due to the finite element discretization.
The mass matrix is invertible, but rather than multiplying by the inverse to put it in standard form, we deal with the model directly in the generalized form to exploit sparsity.
We opt for a simple quadratic performance index
\begin{align}
    J(\bx_0, \bu) = \frac{1}{2} \int_0^\infty \bx(t)^\top \bQ \bx(t)  +  \gamma u(t)^2 \,\rd t. \label{ch3:eq:allen-cahn-2-neumann-cost}
\end{align}
For such large state dimensions,
other HJB solvers are infeasible: standard grid-based techniques are applicable below $O(10)$ dimensions, and even the TT-HJB approach was only demonstrated up to $n=O(100)$ state dimensions.
Since the standard SDRE approach requires the solution of the Riccati equation at each time step, it also is prohibitively expensive for such large state dimensions.
There exist adaptations of the SDRE approach to speed up the online computation of the Riccati solutions \cite{Alla2023,Heiland2023}, but these introduce additional approximations and are beyond the scope of the present work, which is concerned with the HJB approach rather than the SDRE approach.
Even computing an LQR controller can be a significant challenge in such high dimensions without additional assumptions;
we therefore choose a low-rank state penalty $\bQ =  \bC^\top \bC$ and $\gamma=0.1$, where $\bC$ is a binary matrix that selects 10 nodes throughout the domain whose state deviations will be penalized, as shown in
\cref{ch3:fig:example29_domain}.
The low-rank state penalty permits using the LR-ADI solver \cite{Saak2022}, for which we use all default options,
to compute the Riccati solution and the linear feedback gain in Step 1 of \cref{ch3:alg:lr-PPR} in the full dimension $n=103{,}041$.
To compute approximations to the higher-order terms in the feedback law, we use the $r=10$ dominant eigenvectors in the computed Riccati solution as a reduction basis.
These are computed as the left singular vectors of the low-rank factors computed using the LR-ADI solver, as in Step~2 of \cref{ch3:alg:lr-PPR}.
Subsequently, we continue to compute a degree 4 value function approximation and corresponding cubic feedback law as in \cref{ch3:eq:vi-coeffs-reduced,ch3:eq:Ki-coeffs-reduced}.
Specifically, we compute $\bv_3^r$, $\bv_4^r$, $\bK_2^r$, and $\bK_3^r$ with the PPR solver using reduced coefficient matrices as in \cref{ch3:eq:ROM-operators} in Step 3 of \cref{ch3:alg:lr-PPR}.

\begin{figure}[htb]
    \centering
    \includegraphics[width=0.25\textwidth]{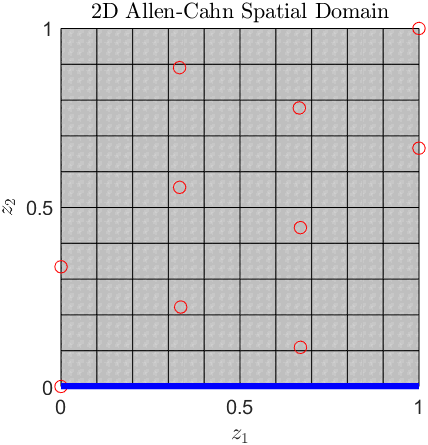}
    \caption{2D Allen-Cahn equation. FEM domain with a very fine mesh, where the grid lines are darkened every 32 nodes. The red circled nodes are those selected to be penalized in the state penalty portion of the cost function. The bottom boundary that is highlighted in blue is where the control input is applied. The model has $n=103{,}041$ degrees of freedom in total.}
    \label{ch3:fig:example29_domain}
\end{figure}

\noindent \paragraph{Results} The open- and closed-loop responses are shown in \cref{ch3:fig:example29} for an initial condition with large jumps, for which we see that the cubic controller successfully stabilizes the steady-state solution at $w=0$.

\begin{figure}[htb!]
    \centering
    \begin{subfigure}[h]{\textwidth}
        \centering
        \includegraphics[width=0.9\textwidth]{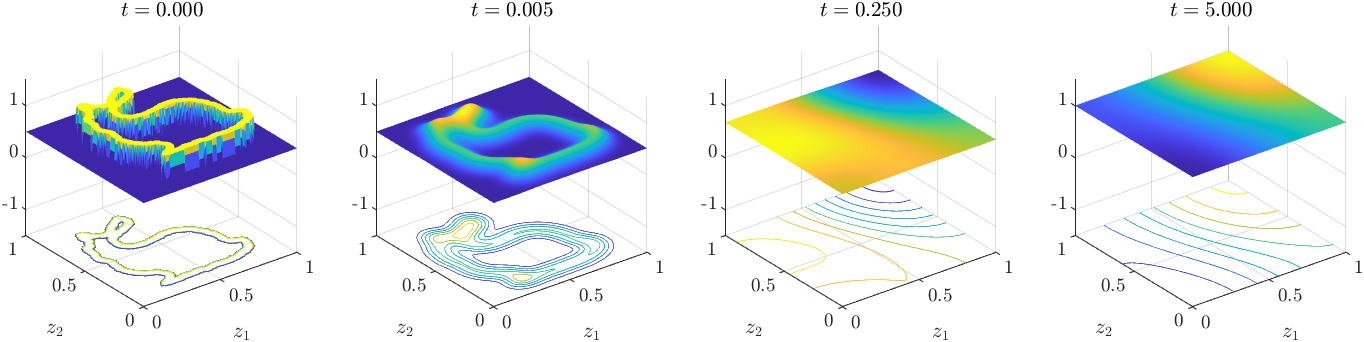}
        \caption{Open-loop.}
    \end{subfigure}

    \begin{subfigure}[h]{\textwidth}
        \centering
        \includegraphics[width=0.9\textwidth]{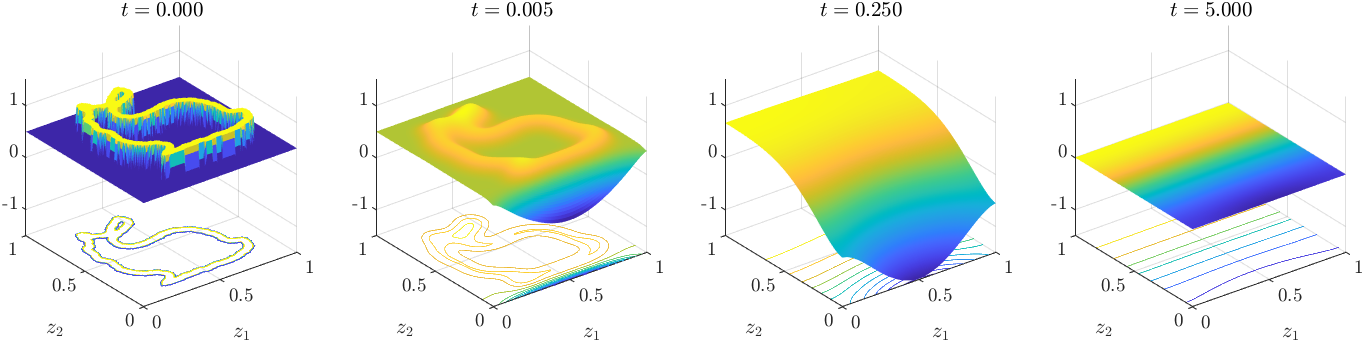}
        \caption{Closed-loop with an approximate cubic PPR controller.}
    \end{subfigure}
    \caption{2D Allen-Cahn equation. Simulation results for a finite element mesh with $n=103{,}041$ degrees of freedom. The uniform zero solution is unstable in open-loop, whereas it is stabilized in closed-loop using the computed PPR controller.}
    \label{ch3:fig:example29}
\end{figure}

While the previous examples focused on the performance of the PPR controllers, the purpose of this example is to demonstrate the scalability of the proposed approach.
Overall, the computation of this cubic PPR controller with our proposed method for a full-order dimension $n=103{,}041$ and reduced-order dimension $r=10$ can be completed in about 5 minutes on a Windows 11 laptop with an AMD Ryzen 9 HX 370 CPU with 32 GB of RAM.
The part of the controller computation that takes the most time is the projection of the model into the reduced dimension, which takes just under 4 minutes.
The second most intensive step is the computation of the Riccati solution, which takes about 45 seconds using the LR-ADI solver.
The computation of this PPR controller also succeeds on a slightly older Windows 10 laptop with an AMD Ryzen 9 5900HS CPU and 16 GB of RAM in about 10 minutes.
Since the Allen-Cahn equation is stiff, the simulation of the closed-loop response is more expensive than actually computing the controller, taking about a 1.5 hours on the Windows 11 laptop using \textsc{Matlab}'s \texttt{ode15s()} integrator.

Refining the mesh even further to a full-order dimension $n=263{,}169$ and retaining the reduced-order dimension $r=10$, the PPR controller can be computed in about 11 minutes on the Windows 11 laptop with the AMD Ryzen 9 HX 370 CPU with 32 GB of RAM.
However, we ran out of RAM during the simulating the closed-loop response of the model, so the simulations had to be performed on a Linux workstation with an Intel Xeon W-3175X CPU and 512 GB of RAM, which took about 8 hours.
The results are similar to those in \cref{ch3:fig:example29}, suggesting that further mesh refinement is unnecessary for this problem.
The closed-loop simulation could possibly be sped up using a more sophisticated integration scheme, but that is beyond the scope of the present work.

In \cref{ch3:tab:RAM}, we summarize the computational time and memory requirements for this example for the PPR controllers with and without the reduction proposed in this work.
Even the computation of an LQR controller is non-trivial for models of dimension $O(100{,}000)$.
The first row shows that theoretical RAM required to store the full Riccati solution $\bV_2$ for the two mesh sizes considered exceeds the typical amount of memory available on standard computer hardware.
Using the LR-ADI solver, the Riccati solutions can be computed in minutes, and the low-rank factors $\bZ$ can be stored using less than a gigabyte of memory.
To a much greater extent, the computation of a full-order PPR controller for such large models would be infeasible on even the highest-performance computer hardware available.
The RAM required to theoretically store the quartic coefficient $\bv_4$ for these high-dimensional models is in the exabyte (1 billion gigabytes) to zettabyte (1 trillion gigabytes) range.
For context, recent estimates place the size of the entire internet at about 100 zettabytes \cite{HealthIT}.
With the proposed reduction, low-dimensional approximations to these coefficients of the form $\bv_4 \approx {\kronF{\bT}{4}}  \bv_4^r$ can be computed in 5 to 10 minutes for a reduced-order dimension based on the dominant $r=10$ eigenvectors.
The reduced coefficients $\bv_4^r$ and the reduction basis $\bT$ together can be stored with less than a gigabyte of memory for both of the meshes considered.

\begin{table}[htb]
    \centering
    \caption{2D Allen-Cahn equation. Computational time and RAM required to store the highest-degree value function coefficient for an LQR controller and a degree 3 PPR controller. The LQR controller, which corresponds to a degree 1 PPR controller, requires computing the quadratic value function coefficient $\bV_2$ as the solution of the GARE. The degree 3 PPR controller requires computing the quartic value function coefficient $\bv_4$ as the solution of a tensor Lyapunov equation. }
    \begin{tabular}{lcccc}
        \toprule \toprule
                                                             & \multicolumn{2}{c}{$n=103{,}041$} & \multicolumn{2}{c}{$n=263{,}169$}                           \\
        \cmidrule(lr){2-3}
        \cmidrule(lr){4-5}
                                                             & CPU time                          & Required RAM                      & CPU time & Required RAM \\
        \midrule
        $\bV_2$                                              & --                                & 85 GB                             & --       & 554 GB       \\
        $\bV_2 \approx \bZ \bZ^\top$                         & 45 sec.                           & 256 MB                            & 3 min.   & 720 MB       \\
        $\bv_4$                                              & --                                & 902 EB                            & --       & 38 ZB        \\
        $\bv_4 \approx {\kronF{\bT}{4}}  \bv_4^r$ for $r=10$ & 5 min.                            & 8 MB                              & 11 min.  & 21 MB        \\
        \bottomrule \bottomrule
    \end{tabular}
    \label{ch3:tab:RAM}
\end{table}

The ability to compute a nonlinear feedback law as an approximate solution to the HJB PDE for models of dimension $O(100{,}000)$ represents a significant step forward from the previous state of the art of $O(10)$ to $O(100)$.
With our open-source and scalable implementation that runs quickly on standard computer hardware, we hope that this will enable the study of nonlinear feedback controllers for very high dimensional systems at the forefront of research applications.


\section{Conclusions}\label{ch3:sec:conclusions}
In this paper, we introduced a computational adaptation to Al'brekht's method for computing power series expansions of the solutions to Hamilton-Jacobi-Bellman PDEs arising in infinite horizon optimal control of control-affine nonlinear systems.
The method involves computing the higher-order nonlinear terms in the feedback law and associated value function in a reduced state dimension, which can be obtained using various model reduction techniques.
Specifically, we propose to use the dominant eigenvectors of the quadratic coefficient in the value function, which is the solution to the usual GARE.
This approach permits taking advantage of the very sophisticated modern solvers for GARE used in the computation of linear feedback laws.
The nonlinear terms are then computed as a perturbation of the linear feedback law, which enables scalability to very high state dimensions on the order of $n=O(100{,}000)$.

Our open-source \textsc{Matlab} implementation permits the design of multi-input controllers for control-affine systems with nonlinear analytic drift and input vector fields.
Furthermore, the cost function can include higher-order state and control penalties, which provide additional flexibility to the control designer.
Finally, our implementation supports models in generalized form with an invertible mass matrix.
When paired with the exploitation of sparsity, this permits control design for high-dimensional nonlinear systems with hundreds of thousands of states, such as those arising from semi-discretized partial differential equations.

While the proposed method scales well and performs better than linear controllers locally,
global performance remains an open challenge.
We see two possible directions regarding this: it is possible that an alternate set of basis functions--perhaps a variant of radial basis functions--could be used to replace the polynomial basis functions that are commonly used.
Another route could be to consider Hamilton-Jacobi-Bellman \textit{inequalities} rather than equality.
This provides a formal mechanism for computing suboptimal solutions that still satisfy the Lyapunov conditions for closed-loop stability, although the challenge becomes how to scalably solve the \textit{inequalities}.

\section*{Acknowledgements}
This work was supported by the Air Force Office of Scientific Research award number FA9550-24-1-0237.

\appendix
\section{Proof of \cref{ch3:thm:kiPoly}}\label{ch3:sec:thm-proof}
Inserting the polynomial forms of $\bf(\bx)$ and $\bg(\bx)$ from~\cref{ch3:eq:FOM-Poly}
into the HJB PDEs~\cref{ch3:eq:HJB-PDE,ch3:eq:HJB-PDE-u} gives
\begin{align}
     & 0 =  \frac{\partial \overline V^\top(\bz)}{\partial \bz} \left[\bA \bx + \sum_{p=2}^\infty \bF_p \kronF{\bx}{p}\right]
    + \frac{\partial \overline V^\top(\bz)}{\partial \bz}\left[\sum_{p=1}^\infty \bG_p \left(\kronF{\bx}{p} \otimes \bI_m \right) + \bB\right] \bu(\bx)  \label{ch3:eq:HJB-PDE-plugged-in}                                                                                   \\
     & \quad + \frac{1}{2} \sum_{p=2}^{\lambda} \bq_p^\top \kronF{\bx}{p}
    + \frac{1}{2} \sum_{p=0}^{\lambda}{\kronF{\bx}{p}}^\top \br_p^\top \kronF{\bu(\bx)}{2}
    \nonumber                                                                                                                                                                                                                                                                \\
     & \sum_{p=0}^\lambda \left( \bI_m \otimes \bu(\bx)^\top \right) \br_p \kronF{\bx}{p} =-  \left[\sum_{p=1}^\infty \bG_p \left(\kronF{\bx}{p} \otimes \bI_m \right) + \bB\right]^\top\frac{\partial \overline V(\bz)}{\partial \bz},  \label{ch3:eq:HJB-PDE-u-plugged-in}
\end{align}
where $\bz = \bE \bx$.
The degree~2 terms in \cref{ch3:eq:HJB-PDE-plugged-in} and degree~1 terms in \cref{ch3:eq:HJB-PDE-u-plugged-in}, respectively, are
\begin{align}
     & 0  = \bx^\top \bE^\top \overline\bV_2  \bA \bx  + \bx^\top \bE^\top \overline\bV_2  \bB  \bK_1 \bx + \frac{1}{2} \bx^\top \bQ_2 \bx + \frac{1}{2} \bx^\top \bK_1^\top  \bR_0  \bK_1 \bx, \label{ch3:eq:HJB-quadratic-terms-1} \\
     & \bR_0 \bK_1 \bx  = - \bB^\top \overline\bV_2 \bE \bx . \label{ch3:eq:HJB-quadratic-terms-2}
\end{align}
Under the stated assumptions, $\bR_0$ must be invertible, so
\cref{ch3:eq:HJB-quadratic-terms-2} can be rearranged for $\bK_1$, and inserting the resulting expression for $\bK_1$ into \cref{ch3:eq:HJB-quadratic-terms-1} and differentiating twice yields the GARE \cref{ch3:eq:ARE}.

Now assume we have $\bar\bv_{k-1}$ and $\bK_{k-2}$;
the next value function coefficient $\bar\bv_k$ is computed from the collection of degree~$k$ terms in \cref{ch3:eq:HJB-PDE-plugged-in}, and subsequently the next feedback law coefficient $\bK_{k-1}$ is computed from the collection of degree~$k-1$ terms in \cref{ch3:eq:HJB-PDE-u-plugged-in}.
We denote by $\textit{LHS}$ the terms in \cref{ch3:eq:HJB-PDE-plugged-in} of degree~$k$ containing the unknown $\bar\bv_k$, which can be written as
\begin{align*}
    \textit{LHS} & \coloneqq -\frac{1}{2} k\bar\bv_k^\top\left(   \bI_n \otimes  \kronF{\bz}{k-1}\right) (\bA + \bB \bK_1 ) \bx.
\end{align*}
These terms will form the left-hand-side of the linear algebraic system for $\bar\bv_k$.
After a series of algebraic manipulations and the substitution $\bz = \bE \bx$, the Kronecker products can be reordered to rewrite these terms using the generalized \textit{$k$-way Lyapunov matrix}:
\begin{align}
    \textit{LHS}
     & =  -\frac{1}{2} \bar\bv_k^\top \cL^\bE_{k}(\bA + \bB \bK_1 ) \kronF{\bx}{k} .      \label{ch3:eq:k-way-trick-2}
\end{align}

We denote by $\textit{RHS}$ the terms in \cref{ch3:eq:HJB-PDE-plugged-in} of degree~$k$ containing all terms other than the unknown $\bar\bv_k$, as they make up the right-hand-side of the linear algebraic equation for $\bar\bv_k$.
This collection of terms is
\begin{align}
    \textit{RHS} & \coloneqq  \frac{1}{2} \sum_{\mathclap{i+p=k+1}} i \bar\bv_i^\top \left(\bI_n \otimes \kronF{\bz}{i-1}\right) \bF_p \kronF{\bx}{p}
    + \frac{1}{2} \sum_{\mathclap{i+p+q=k+1}} i \bar\bv_i^\top \left(\bI_n \otimes \kronF{\bz}{i-1}\right)  \bG_p \left(\kronF{\bx}{p} \otimes \bI_m \right) \bK_q \kronF{\bx}{q}  \label{ch3:eq:HJB-PDE-plugged-in-symmetric-1}                                      \\
                 & \hspace{1cm} + \frac{1}{2}  \bq_k^\top \kronF{\bx}{k} + \frac{1}{2} \sum_{\mathclap{i+p+q=k}}{\kronF{\bx}{i}}^\top \br_i^\top \left( \bK_p \kronF{\bx}{p}   \otimes  \bK_q \kronF{\bx}{q} \right).   \label{ch3:eq:HJB-PDE-plugged-in-symmetric-3}
\end{align}
The terms from \cref{ch3:eq:HJB-PDE-plugged-in-symmetric-1} containing the drift nonlinearities $\bF_p$ can be rewritten using the generalized \textit{$k$-way Lyapunov matrix} as
\begin{align}
     & \frac{1}{2}i \bar\bv_i^\top \left(\bI_n \otimes \kronF{\bz}{i-1}\right) \bF_p \kronF{\bx}{p}
    = \frac{1}{2} \bar\bv_i^\top \cL^\bE_i(\bF_p) \kronF{\bx}{k}. \label{ch3:eq:N-terms}
\end{align}
The terms from \cref{ch3:eq:HJB-PDE-plugged-in-symmetric-1} containing the input map nonlinearities $\bG_p$ can be rewritten using Kronecker product identities as
\begin{align}
    \frac{1}{2}  i \bar\bv_i^\top \left(\bI_n \otimes \kronF{\bz}{i-1}\right) \bG_p \left(\kronF{\bx}{p} \otimes \bI_m \right)   \bK_q \kronF{\bx}{q}
     & = \frac{1}{2} i \Vec\left[\bK_q^\top\cG\cV^\bE_{p,i}\right]^\top {\kronF{\bx}{k}}, \label{ch3:eq:G-terms}
\end{align}
where $\cG\cV^\bE_{p,i} = {\text{reshape}\left(\bG_p^\top\overline\bV_i \kronF{\bE}{i-1},m,n^{k-q}\right)}$.
The state penalty terms containing  $\bq_p$ are already in the desired form $\frac{1}{2}  \bq_p^\top \kronF{\bx}{p}$.
The input penalty terms containing $\br_i$ are rewritten using Kronecker product identities as
\begin{align}
    \frac{1}{2} {\kronF{\bx}{i}}^\top \br_i^\top \left(\bK_p \kronF{\bx}{p} \otimes \bK_q \kronF{\bx}{q}\right)
     & =  \frac{1}{2} \Vec\left[\br_i^\top \left(\bK_p \otimes \bK_q \right)\right]^\top \kronF{\bx}{k}. \label{ch3:eq:R-terms}
\end{align}

Combining \cref{ch3:eq:k-way-trick-2,ch3:eq:N-terms,ch3:eq:G-terms,ch3:eq:R-terms},
multiplying by two, and summing over the appropriate sets of indices, the collection of degree $k$ terms is
\begin{align*}
    -\bar\bv_k^\top \cL^\bE_{k}(\bA + \bB\bK_1 ) \kronF{\bx}{k} & =
    \sum_{\mathclap{i+p=k+1}} \bar\bv_i^\top \cL^\bE_{i} (\bF_p) \kronF{\bx}{k}
    + \sum_{\mathclap{i+p+q=k+1}} i \Vec\left[\bK_q^\top\cG\cV^\bE_{p,i}\right]^\top {\kronF{\bx}{k}} \\
                                                                & +  \bq_k^\top \kronF{\bx}{k}
    + \sum_{\mathclap{i+p+q=k}} \Vec\left[\br_i^\top \left(\bK_p \otimes \bK_q \right)\right]^\top \kronF{\bx}{k}.
\end{align*}
Requiring this to hold for all $\bx$ leads to \cref{ch3:eq:value-function-linear-system}.
A symmetrization operation is required on the right-hand side due to the elimination of the $\bx$ variables to ensure that the computed coefficient is also symmetric, e.g. $\bx^\top \bM \bx = 0 \Leftrightarrow \bM = \bzero$ is only true if $\bM$ is symmetric.

The linear systems \labelcref{ch3:eq:value-function-linear-system} are uniquely solvable, as $\cL^\bE_{k}(\bA + \bB\bK_1 )^\top$ is a nonsingular matrix.
The proof follows easily by converting the dynamics to standard form by multiplying by $\bE^{-1}$, after which the standard results of Al'brekht apply.
For a more detailed discussion of the proof for the standard case, we refer to our related work \cite[Sec. III-A]{Corbin2025a}.

The gain coefficient $\bK_{k-1}$ is computed from the collection of degree $k-1$ terms in \cref{ch3:eq:HJB-PDE-u-plugged-in}:
\begin{align*}
    \sum_{\mathclap{p+q=k-1}}\left( \bI_m \otimes {\kronF{\bx}{p}}^\top \bK_p^\top \right) \br_q \kronF{\bx}{q} =- \frac{1}{2} \sum_{\mathclap{i+p=k}} \left({\kronF{\bx}{p}}^\top \otimes \bI_m \right)\bG_p^\top \left(\bI_n \otimes {\kronF{\bz}{i-1}}^\top\right) \bar\bv_i i.
\end{align*}
The left-hand-side can be rewritten using Kronecker product identities as
\begin{align}
    \left(\bI_m \otimes {\kronF{\bx}{p}}^\top \bK_p^\top\right) \br_q \kronF{\bx}{q} & =  \bR_q^\top \left(\bK_p \otimes \bI_{n^q}\right) \kronF{\bx}{k-1}, \label{ch3:eq:lhs-2}
\end{align}
where $\bR_q \coloneqq \text{reshape}\left(\br_q^\top, mn^q,m\right)$.
The right-hand-side is rewritten as
\begin{align}
    \frac{1}{2}\left({\kronF{\bx}{p}}^\top \otimes \bI_m \right)\bG_p^\top \left(\bI_n \otimes {\kronF{\bz}{i-1}}^\top\right) \bar\bv_i i
     & = \frac{1}{2}i
    \cG\cV^\bE_{p,i}
    \kronF{\bx}{k-1}, \label{ch3:eq:rhs-2}
\end{align}
where $\cG\cV^\bE_{p,i} \coloneqq \text{reshape}\left(\bG_p^\top \overline\bV_i \kronF{\bE}{i-1},m,n^{k-1}\right)$.
Combining \cref{ch3:eq:lhs-2,ch3:eq:rhs-2} yields
\begin{align*}
    \sum_{\mathclap{p+q=k-1}} \bR_q^\top \left(\bK_p \otimes \bI_{n^q}\right) \kronF{\bx}{k-1}
     & = -  \sum_{\mathclap{i+p=k}} \frac{1}{2}i\cG\cV^\bE_{p,i} \kronF{\bx}{k-1}.
\end{align*}
Note that the $q=0$ and $p=k-1$ case on the left side is the only term containing the unknown coefficient $\bK_{k-1}$, so that term can be extracted.
We can then rearrange the equation for the unknown coefficient $\bK_{k-1}$, completing the proof:
\begin{align*}
    \bK_{k-1} & =  \bR_0^{-1}  \left[- \sum_{\mathclap{\substack{p+q=k-1 \\q\neq0}}}  \bR_q^\top \left(\bK_p \otimes \bI_{n^q}\right)   -\sum_{\mathclap{i+p=k}} \frac{1}{2} i\cG\cV^\bE_{p,i}\right].
\end{align*}

\section{Conditions for the existence of a stabilizing solution to the LQR problem}\label{sec:are-solution}
One of the key assumptions in \cref{ch3:thm:albrekht} is that a stabilizing solution exists to the LQR problem associated with the linearized dynamics.
This is equivalent to assuming the GARE \cref{ch3:eq:ARE} has a unique stabilizing solution, which is satisfied when the transformed pair $(\bE^{-1}\bA,\bE^{-1}\bB)$ is stabilizable and $(\bE^{-1}\bA,\bC)$ is detectable, where $\bQ = \bC^\top \bC$.
\begin{theorem}[Popov-Belevitch-Hautus (PBH) tests for stabilizability and detectability \cite{Hespanha2018}]
    The continuous-time generalized linear time-invariant (LTI) system
    \begin{equation}\label{eq:LTI}
        \begin{split}
            \dot{\bx} & = \bA \bx + \bB \bu, \\
            \by       & = \bC \bx,
        \end{split}
    \end{equation}
    is stabilizable if and only if
    \begin{align*}
        \text{rank}\begin{bmatrix}
                       \bA-\lambda\bI & \bB
                   \end{bmatrix} = n, \qquad \forall \lambda \in \mathbb{C} : \mathfrak{R}[\lambda] \geq 0.
    \end{align*}
    The continuous-time generalized LTI system \cref{eq:LTI} is detectable if and only if
    \begin{align*}
        \text{rank}\begin{bmatrix}
                       \bA-\lambda\bI \\ \bC
                   \end{bmatrix} = n, \qquad \forall \lambda \in \mathbb{C} : \mathfrak{R}[\lambda] \geq 0.
    \end{align*}
\end{theorem}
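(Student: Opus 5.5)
The plan is to prove the stabilizability statement first, and then obtain the detectability statement by duality. Stabilizability is taken to mean that there exists a gain $\bK$ with $\bA+\bB\bK$ Hurwitz. For the generalized system $\bE\dot\bx=\bA\bx+\bB\bu$ with $\bE$ nonsingular, the statement is applied to the standard-form pair $(\bE^{-1}\bA,\bE^{-1}\bB)$ used in \cref{ch3:thm:kiPoly}. So it suffices to argue for a standard pair $(\bA,\bB)$.

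\emph{Necessity.} Suppose the rank condition fails at some $\lambda$ with $\mathfrak{R}[\lambda]\geq 0$. Then there is a nonzero left vector $\bw$ with $\bw^*(\bA-\lambda\bI)=\bzero$ and $\bw^*\bB=\bzero$. For any gain $\bK$ this gives $\bw^*(\bA+\bB\bK)=\lambda\bw^*$. Hence $\lambda$ is an eigenvalue of every closed-loop matrix, no $\bK$ makes $\bA+\bB\bK$ Hurwitz, and the system is not stabilizable.

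\emph{Sufficiency.} This direction goes through the Kalman controllability decomposition, whose steps I would carry out in order:
\begin{enumerate}
\item Choose a nonsingular $\bT$ whose leading columns span the controllable subspace $\mathrm{span}[\bB,\bA\bB,\dots,\bA^{n-1}\bB]$. This subspace is $\bA$-invariant and contains the range of $\bB$.
\item In the new coordinates,
\begin{equation*}
\bT^{-1}\bA\bT=\begin{bmatrix}\bA_{11}&\bA_{12}\\ \bzero&\bA_{22}\end{bmatrix},\qquad \bT^{-1}\bB=\begin{bmatrix}\bB_1\\ \bzero\end{bmatrix},
\end{equation*}
with $(\bA_{11},\bB_1)$ controllable.
\item Show that every eigenvalue of $\bA_{22}$ violates the rank condition. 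If $\bw_2^*\bA_{22}=\lambda\bw_2^*$ with $\bw_2\neq\bzero$, then $\bw^*=[\bzero\ \bw_2^*]\bT^{-1}$ satisfies $\bw^*\bA=\lambda\bw^*$ and $\bw^*\bB=\bzero$, so the rank drops at $\lambda$.
\item Conclude that the rank condition on the closed right half-plane forces $\bA_{22}$ to be Hurwitz.
\item Use controllability of $(\bA_{11},\bB_1)$ to pick $\bK_1$ with $\bA_{11}+\bB_1\bK_1$ Hurwitz.
\item Set $\bK=[\bK_1\ \bzero]\bT^{-1}$. The closed-loop matrix is then block upper triangular with diagonal blocks $\bA_{11}+\bB_1\bK_1$ and $\bA_{22}$, so it is Hurwitz.
\end{enumerate}

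\emph{Detectability.} By definition, $(\bA,\bC)$ is detectable iff $(\bA^\top,\bC^\top)$ is stabilizable. Transposing the stacked matrix gives
\begin{equation*}
\text{rank}\begin{bmatrix}\bA-\lambda\bI\\ \bC\end{bmatrix}=\text{rank}\begin{bmatrix}\bA^\top-\lambda\bI & \bC^\top\end{bmatrix}.
\end{equation*}
This identity uses plain transposition, not conjugate transposition, so the value of $\lambda$ is unchanged and the closed right half-plane condition carries over exactly. The detectability test therefore follows from the stabilizability test applied to $(\bA^\top,\bC^\top)$.

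\emph{Main obstacle.} The substantive ingredient is step 5, that a controllable pair admits a stabilizing gain. I would first try to cite the pole-placement theorem directly. If a self-contained argument is wanted, I would use the controllability Gramian $\bM=\int_0^1 e^{-\bA_{11}s}\bB_1\bB_1^\top e^{-\bA_{11}^\top s}\,\rd s$. Controllability makes $\bM$ positive definite, and $\bK_1=-\bB_1^\top \bM^{-1}$ can be checked to be stabilizing via a Lyapunov argument. Alternatively, I would reduce to the single-input controller canonical form and use Ackermann's formula. Everything else in the plan is linear-algebra bookkeeping.
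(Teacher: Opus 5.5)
Your proof is correct. Necessity follows from the common left eigenvector. Sufficiency follows from the Kalman controllability decomposition plus a stabilizing gain on the controllable block, and your Gramian gain $\bK_1=-\bB_1^\top\bM^{-1}$ does work: it gives $(\bA_{11}+\bB_1\bK_1)\bM+\bM(\bA_{11}+\bB_1\bK_1)^\top=-\bB_1\bB_1^\top-e^{-\bA_{11}}\bB_1\bB_1^\top e^{-\bA_{11}^\top}$, so a closed-loop left eigenvector with $\mathfrak{R}[\mu]\geq 0$ would have to annihilate $\bB_1$, contradicting controllability. Plain transposition then carries the test over to detectability.

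The paper gives no proof of its own and simply defers to \cite{Hespanha2018}, and your argument is the standard textbook route for this result. One caveat: your ``by definition'' step for detectability assumes the dual definition, namely that some $\mathbf{L}$ makes $\bA+\mathbf{L}\bC$ Hurwitz. If detectability is instead defined as stability of the unobservable modes, you should add the routine dual of your steps 1--4 on the observability decomposition.
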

It turns out that detectability of $(\bE^{-1}\bA,\bC)$ is equivalent to $(\bE^{-1}\bA,\bQ)$ \cite{Hespanha2018}.


\begin{thebibliography}{59}
    \expandafter\ifx\csname natexlab\endcsname\relax\def\natexlab#1{#1}\fi
    \providecommand{\url}[1]{\texttt{#1}}
    \providecommand{\href}[2]{#2}
    \providecommand{\path}[1]{#1}
    \providecommand{\DOIprefix}{doi:}
    \providecommand{\ArXivprefix}{arXiv:}
    \providecommand{\URLprefix}{URL: }
    \providecommand{\Pubmedprefix}{pmid:}
    \providecommand{\doi}[1]{\href{http://dx.doi.org/#1}{\path{#1}}}
    \providecommand{\Pubmed}[1]{\href{pmid:#1}{\path{#1}}}
    \providecommand{\bibinfo}[2]{#2}
    \ifx\xfnm\relax \def\xfnm[#1]{\unskip,\space#1}\fi
    \bibitem[{Bardi and Dolcetta(2013)}]{Bardi2013}
    \bibinfo{author}{M.~Bardi}, \bibinfo{author}{I.~C. Dolcetta},
    \bibinfo{title}{Optimal control and viscosity solutions of
        {H}amilton-{J}acobi-{B}ellman equations}, Systems \& Control,
    \bibinfo{publisher}{Springer}, \bibinfo{address}{New York, NY},
    \bibinfo{year}{2013}.
    \bibitem[{Crandall and Lions(1983)}]{Crandall1983}
    \bibinfo{author}{M.~G. Crandall}, \bibinfo{author}{P.-L. Lions},
    \newblock \bibinfo{title}{Viscosity solutions of {H}amilton-{J}acobi
    equations},
    \newblock \bibinfo{journal}{Transactions of the American Mathematical Society}
    \bibinfo{volume}{277} (\bibinfo{year}{1983}) \bibinfo{pages}{1--42}.
    \DOIprefix\doi{10.1090/s0002-9947-1983-0690039-8}.
    \bibitem[{Falcone and Ferretti(2014)}]{Falcone2014}
    \bibinfo{author}{M.~Falcone}, \bibinfo{author}{R.~Ferretti},
    \bibinfo{title}{Semi-{L}agrangian approximation schemes for linear and
        {H}amilton-{J}acobi equations}, Applied mathematics,
    \bibinfo{publisher}{SIAM}, \bibinfo{address}{Philadelphia, PA},
    \bibinfo{year}{2014}.
    \bibitem[{Falcone and Ferretti(2016)}]{Falcone2016}
    \bibinfo{author}{M.~Falcone}, \bibinfo{author}{R.~Ferretti},
    \newblock \bibinfo{title}{Numerical methods for {H}amilton-{J}acobi type
    equations},
    \newblock in: \bibinfo{booktitle}{Handbook of Numerical Analysis},
    \bibinfo{publisher}{Elsevier}, \bibinfo{year}{2016}, pp.
    \bibinfo{pages}{603--626}. \DOIprefix\doi{10.1016/bs.hna.2016.09.018}.
    \bibitem[{Kang and Wilcox(2017)}]{Kang2017}
    \bibinfo{author}{W.~Kang}, \bibinfo{author}{L.~C. Wilcox},
    \newblock \bibinfo{title}{Mitigating the curse of dimensionality: sparse grid
        characteristics method for optimal feedback control and {HJB} equations},
    \newblock \bibinfo{journal}{Computational Optimization and Applications}
    \bibinfo{volume}{68} (\bibinfo{year}{2017}) \bibinfo{pages}{289--315}.
    \DOIprefix\doi{10.1007/s10589-017-9910-0}.
    \bibitem[{Beard et~al.(1997)Beard, Saridis, and Wen}]{Beard1997}
    \bibinfo{author}{R.~W. Beard}, \bibinfo{author}{G.~N. Saridis},
    \bibinfo{author}{J.~T. Wen},
    \newblock \bibinfo{title}{Galerkin approximations of the generalized
        {H}amilton-{J}acobi-{B}ellman equation},
    \newblock \bibinfo{journal}{Automatica} \bibinfo{volume}{33}
    (\bibinfo{year}{1997}) \bibinfo{pages}{2159--2177}.
    \DOIprefix\doi{10.1016/s0005-1098(97)00128-3}.
    \bibitem[{Kalise and Kunisch(2018)}]{Kalise2018}
    \bibinfo{author}{D.~Kalise}, \bibinfo{author}{K.~Kunisch},
    \newblock \bibinfo{title}{Polynomial approximation of high-dimensional
    {H}amilton-{J}acobi-{B}ellman equations and applications to feedback control
    of semilinear parabolic {PDE}s},
    \newblock \bibinfo{journal}{SIAM Journal on Scientific Computing}
    \bibinfo{volume}{40} (\bibinfo{year}{2018}) \bibinfo{pages}{A629--A652}.
    \DOIprefix\doi{10.1137/17M1116635}.
    \bibitem[{Gorodetsky et~al.(2018)Gorodetsky, Karaman, and
                Marzouk}]{Gorodetsky2018}
    \bibinfo{author}{A.~Gorodetsky}, \bibinfo{author}{S.~Karaman},
    \bibinfo{author}{Y.~Marzouk},
    \newblock \bibinfo{title}{High-dimensional stochastic optimal control using
        continuous tensor decompositions},
    \newblock \bibinfo{journal}{The International Journal of Robotics Research}
    \bibinfo{volume}{37} (\bibinfo{year}{2018}) \bibinfo{pages}{340--377}.
    \DOIprefix\doi{10.1177/0278364917753994}.
    \bibitem[{Dolgov et~al.(2023)Dolgov, Kalise, and Saluzzi}]{Dolgov2023}
    \bibinfo{author}{S.~Dolgov}, \bibinfo{author}{D.~Kalise},
    \bibinfo{author}{L.~Saluzzi},
    \newblock \bibinfo{title}{Data-driven tensor train gradient cross approximation
    for {H}amilton-{J}acobi-{B}ellman equations},
    \newblock \bibinfo{journal}{SIAM Journal on Scientific Computing}
    \bibinfo{volume}{45} (\bibinfo{year}{2023}) \bibinfo{pages}{A2153--A2184}.
    \DOIprefix\doi{10.1137/22m1498401}.
    \bibitem[{Ehring and Haasdonk(2024)}]{Ehring2024}
    \bibinfo{author}{T.~Ehring}, \bibinfo{author}{B.~Haasdonk},
    \newblock \bibinfo{title}{Hermite kernel surrogates for the value function of
        high-dimensional nonlinear optimal control problems},
    \newblock \bibinfo{journal}{Advances in Computational Mathematics}
    \bibinfo{volume}{50} (\bibinfo{year}{2024}).
    \DOIprefix\doi{10.1007/s10444-024-10128-5}.
    \bibitem[{Nakamura-Zimmerer et~al.(2021)Nakamura-Zimmerer, Gong, and
                Kang}]{NakamuraZimmerer2021}
    \bibinfo{author}{T.~Nakamura-Zimmerer}, \bibinfo{author}{Q.~Gong},
    \bibinfo{author}{W.~Kang},
    \newblock \bibinfo{title}{Adaptive deep learning for high-dimensional
    {H}amilton-{J}acobi-{B}ellman equations},
    \newblock \bibinfo{journal}{SIAM Journal on Scientific Computing}
    \bibinfo{volume}{43} (\bibinfo{year}{2021}) \bibinfo{pages}{A1221--A1247}.
    \DOIprefix\doi{10.1137/19m1288802}.
    \bibitem[{Ito and Schroeter(2001)}]{Ito2001}
    \bibinfo{author}{K.~Ito}, \bibinfo{author}{J.~D. Schroeter},
    \newblock \bibinfo{title}{Reduced order feedback synthesis for viscous
        incompressible flows},
    \newblock \bibinfo{journal}{Mathematical and Computer Modelling}
    \bibinfo{volume}{33} (\bibinfo{year}{2001}) \bibinfo{pages}{173--192}.
    \DOIprefix\doi{10.1016/s0895-7177(00)00237-5}.
    \bibitem[{Rawlings(2020)}]{Rawlings2020}
    \bibinfo{author}{J.~B. Rawlings}, \bibinfo{title}{Model predictive control:
        theory, computation and design}, \bibinfo{edition}{second} ed.,
    \bibinfo{publisher}{Nob Hill Publishing, LLC}, \bibinfo{address}{Santa
        Barbara}, \bibinfo{year}{2020}.
    \bibitem[{Al'brekht(1961)}]{Albrekht1961}
    \bibinfo{author}{E.~G. Al'brekht},
    \newblock \bibinfo{title}{On the optimal stabilization of nonlinear systems},
    \newblock \bibinfo{journal}{Journal of Applied Mathematics and Mechanics}
    \bibinfo{volume}{25} (\bibinfo{year}{1961}) \bibinfo{pages}{1254--1266}.
    \DOIprefix\doi{10.1016/0021-8928(61)90005-3}.
    \bibitem[{Krener et~al.(2013)Krener, Aguilar, and Hunt}]{Krener2013}
    \bibinfo{author}{A.~Krener}, \bibinfo{author}{C.~Aguilar},
    \bibinfo{author}{T.~Hunt},
    \newblock \bibinfo{title}{Series solutions of {HJB} equations},
    \newblock \bibinfo{journal}{Mathematical System Theory---Festschrift in Honor
        of Uwe Helmke on the Occasion of his Sixtieth Birthday}
    (\bibinfo{year}{2013}) \bibinfo{pages}{247--260}.
    \bibitem[{Krener(2019)}]{Krener2019}
    \bibinfo{author}{A.~J. Krener}, \bibinfo{title}{Nonlinear {S}ystems {T}oolbox},
    \bibinfo{howpublished}{Available on request to ajkrener@nps.edu},
    \bibinfo{year}{2019}.
    \bibitem[{Grasedyck(2004)}]{Grasedyck2004}
    \bibinfo{author}{L.~Grasedyck},
    \newblock \bibinfo{title}{Existence and computation of low {K}ronecker-rank
        approximations for large linear systems of tensor product structure},
    \newblock \bibinfo{journal}{Computing} \bibinfo{volume}{72}
    (\bibinfo{year}{2004}). \DOIprefix\doi{10.1007/s00607-003-0037-z}.
    \bibitem[{Chen and Kressner(2019)}]{Chen2019}
    \bibinfo{author}{M.~Chen}, \bibinfo{author}{D.~Kressner},
    \newblock \bibinfo{title}{Recursive blocked algorithms for linear systems with
            {K}ronecker product structure},
    \newblock \bibinfo{journal}{Numerical Algorithms} \bibinfo{volume}{84}
    (\bibinfo{year}{2019}) \bibinfo{pages}{1199--1216}.
    \DOIprefix\doi{10.1007/s11075-019-00797-5}.
    \bibitem[{Breiten et~al.(2018)Breiten, Kunisch, and Pfeiffer}]{Breiten2018}
    \bibinfo{author}{T.~Breiten}, \bibinfo{author}{K.~Kunisch},
    \bibinfo{author}{L.~Pfeiffer},
    \newblock \bibinfo{title}{Numerical study of polynomial feedback laws for a
        bilinear control problem},
    \newblock \bibinfo{journal}{Mathematical Control \& Related Fields}
    \bibinfo{volume}{8} (\bibinfo{year}{2018}) \bibinfo{pages}{557--582}.
    \DOIprefix\doi{10.3934/mcrf.2018023}.
    \bibitem[{Breiten et~al.(2019)Breiten, Kunisch, and Pfeiffer}]{Breiten2019a}
    \bibinfo{author}{T.~Breiten}, \bibinfo{author}{K.~Kunisch},
    \bibinfo{author}{L.~Pfeiffer},
    \newblock \bibinfo{title}{Feedback stabilization of the two-dimensional
    {N}avier–{S}tokes equations by value function approximation},
    \newblock \bibinfo{journal}{Applied Mathematics \& Optimization}
    \bibinfo{volume}{80} (\bibinfo{year}{2019}) \bibinfo{pages}{599--641}.
    \DOIprefix\doi{10.1007/s00245-019-09586-x}.
    \bibitem[{Almubarak et~al.(2019)Almubarak, Sadegh, and Taylor}]{Almubarak2019}
    \bibinfo{author}{H.~Almubarak}, \bibinfo{author}{N.~Sadegh},
    \bibinfo{author}{D.~G. Taylor},
    \newblock \bibinfo{title}{Infinite horizon nonlinear quadratic cost regulator},
    \newblock in: \bibinfo{booktitle}{2019 American Control Conference ({ACC})},
    \bibinfo{publisher}{{IEEE}}, \bibinfo{year}{2019}, pp.
    \bibinfo{pages}{5570--5575}. \DOIprefix\doi{10.23919/acc.2019.8814783}.
    \bibitem[{Borggaard and Zietsman(2018)}]{Borggaard2018}
    \bibinfo{author}{J.~Borggaard}, \bibinfo{author}{L.~Zietsman},
    \newblock \bibinfo{title}{Computation of nonlinear feedback for flow control
        problems},
    \newblock in: \bibinfo{booktitle}{2018 American Control Conference ({ACC})},
    \bibinfo{publisher}{{IEEE}}, \bibinfo{year}{2018}, pp.
    \bibinfo{pages}{1726--1731}. \DOIprefix\doi{10.23919/acc.2018.8431410}.
    \bibitem[{Borggaard and Zietsman(2020)}]{Borggaard2020}
    \bibinfo{author}{J.~Borggaard}, \bibinfo{author}{L.~Zietsman},
    \newblock \bibinfo{title}{The quadratic-quadratic regulator problem:
        approximating feedback controls for quadratic-in-state nonlinear systems},
    \newblock in: \bibinfo{booktitle}{2020 American Control Conference ({ACC})},
    \bibinfo{publisher}{{IEEE}}, \bibinfo{year}{2020}, pp.
    \bibinfo{pages}{818--823}. \DOIprefix\doi{10.23919/ACC45564.2020.9147286}.
    \bibitem[{Borggaard and Zietsman(2021)}]{Borggaard2021}
    \bibinfo{author}{J.~Borggaard}, \bibinfo{author}{L.~Zietsman},
    \newblock \bibinfo{title}{On approximating polynomial-quadratic regulator
        problems},
    \newblock \bibinfo{journal}{{IFAC}-{PapersOnLine}} \bibinfo{volume}{54}
    (\bibinfo{year}{2021}) \bibinfo{pages}{329--334}.
    \DOIprefix\doi{10.1016/j.ifacol.2021.06.090}.
    \bibitem[{Kramer et~al.(2024)Kramer, Gugercin, Borggaard, and
                Balicki}]{Kramer2024}
    \bibinfo{author}{B.~Kramer}, \bibinfo{author}{S.~Gugercin},
    \bibinfo{author}{J.~Borggaard}, \bibinfo{author}{L.~Balicki},
    \newblock \bibinfo{title}{Scalable computation of energy functions for
        nonlinear balanced truncation},
    \newblock \bibinfo{journal}{Computer Methods in Applied Mechanics and
        Engineering} \bibinfo{volume}{427} (\bibinfo{year}{2024})
    \bibinfo{pages}{117011}. \DOIprefix\doi{10.1016/j.cma.2024.117011}.
    \bibitem[{Benner et~al.(2013)Benner, Saak, Stoll, and Weichelt}]{Benner2013b}
    \bibinfo{author}{P.~Benner}, \bibinfo{author}{J.~Saak},
    \bibinfo{author}{M.~Stoll}, \bibinfo{author}{H.~K. Weichelt},
    \newblock \bibinfo{title}{Efficient solution of large-scale saddle point
        systems arising in {R}iccati-based boundary feedback stabilization of
        incompressible {S}tokes flow},
    \newblock \bibinfo{journal}{SIAM Journal on Scientific Computing}
    \bibinfo{volume}{35} (\bibinfo{year}{2013}) \bibinfo{pages}{S150--S170}.
    \DOIprefix\doi{10.1137/120881312}.
    \bibitem[{Corbin and Kramer(2024)}]{Corbin2024e}
    \bibinfo{author}{N.~A. Corbin}, \bibinfo{author}{B.~Kramer},
    \newblock \bibinfo{title}{Computing solutions to the polynomial-polynomial
        regulator problem},
    \newblock in: \bibinfo{booktitle}{2024 IEEE 63rd Conference on Decision and
        Control (CDC)}, \bibinfo{publisher}{IEEE}, \bibinfo{year}{2024}, pp.
    \bibinfo{pages}{2689--2696}. \DOIprefix\doi{10.1109/cdc56724.2024.10885897}.
    \bibitem[{Lukes(1969)}]{Lukes1969}
    \bibinfo{author}{D.~L. Lukes},
    \newblock \bibinfo{title}{Optimal regulation of nonlinear dynamical systems},
    \newblock \bibinfo{journal}{{SIAM} Journal on Control} \bibinfo{volume}{7}
    (\bibinfo{year}{1969}) \bibinfo{pages}{75--100}.
    \DOIprefix\doi{10.1137/0307007}.
    \bibitem[{Corbin and Kramer(2025)}]{Corbin2025a}
    \bibinfo{author}{N.~A. Corbin}, \bibinfo{author}{B.~Kramer},
    \newblock \bibinfo{title}{Scalable computation of $\mathcal {H}_\infty$ energy
        functions for polynomial control-affine systems},
    \newblock \bibinfo{journal}{IEEE Transactions on Automatic Control}
    \bibinfo{volume}{70} (\bibinfo{year}{2025}) \bibinfo{pages}{3088--3100}.
    \DOIprefix\doi{10.1109/tac.2024.3494472}.
    \bibitem[{Kunisch et~al.(2004)Kunisch, Volkwein, and Xie}]{Kunisch2004}
    \bibinfo{author}{K.~Kunisch}, \bibinfo{author}{S.~Volkwein},
    \bibinfo{author}{L.~Xie},
    \newblock \bibinfo{title}{{HJB}-{POD}-based feedback design for the optimal
    control of evolution problems},
    \newblock \bibinfo{journal}{SIAM Journal on Applied Dynamical Systems}
    \bibinfo{volume}{3} (\bibinfo{year}{2004}) \bibinfo{pages}{701--722}.
    \DOIprefix\doi{10.1137/030600485}.
    \bibitem[{Alla et~al.(2017)Alla, Schmidt, and Haasdonk}]{Alla2017}
    \bibinfo{author}{A.~Alla}, \bibinfo{author}{A.~Schmidt},
    \bibinfo{author}{B.~Haasdonk},
    \newblock \bibinfo{title}{Model order reduction approaches for infinite horizon
        optimal control problems via the {HJB} equation},
    \newblock in: \bibinfo{booktitle}{Model Reduction of Parametrized Systems},
    \bibinfo{publisher}{Springer}, \bibinfo{year}{2017}, pp.
    \bibinfo{pages}{333--347}. \DOIprefix\doi{10.1007/978-3-319-58786-8_21}.
    \bibitem[{Benner and Damm(2011)}]{Benner2011}
    \bibinfo{author}{P.~Benner}, \bibinfo{author}{T.~Damm},
    \newblock \bibinfo{title}{{L}yapunov equations, energy functionals, and model
        order reduction of bilinear and stochastic systems},
    \newblock \bibinfo{journal}{SIAM Journal on Control and Optimization}
    \bibinfo{volume}{49} (\bibinfo{year}{2011}) \bibinfo{pages}{686--711}.
    \DOIprefix\doi{10.1137/09075041x}.
    \bibitem[{Jonckheere and Silverman(1983)}]{Jonckheere1983}
    \bibinfo{author}{E.~Jonckheere}, \bibinfo{author}{L.~Silverman},
    \newblock \bibinfo{title}{A new set of invariants for linear
        systems---application to reduced order compensator design},
    \newblock \bibinfo{journal}{{IEEE} Transactions on Automatic Control}
    \bibinfo{volume}{28} (\bibinfo{year}{1983}) \bibinfo{pages}{953--964}.
    \DOIprefix\doi{10.1109/tac.1983.1103159}.
    \bibitem[{Mustafa and Glover(1991)}]{Mustafa1991}
    \bibinfo{author}{D.~Mustafa}, \bibinfo{author}{K.~Glover},
    \newblock \bibinfo{title}{Controller reduction by
            {$\mathcal{H}_\infty$}-balanced truncation},
    \newblock \bibinfo{journal}{{IEEE} Transactions on Automatic Control}
    \bibinfo{volume}{36} (\bibinfo{year}{1991}) \bibinfo{pages}{668--682}.
    \DOIprefix\doi{10.1109/9.86941}.
    \bibitem[{Benner et~al.(2017)Benner, Bujanovi{\'{c}}, Kürschner, and
    Saak}]{Benner2017b}
    \bibinfo{author}{P.~Benner}, \bibinfo{author}{Z.~Bujanovi{\'{c}}},
    \bibinfo{author}{P.~Kürschner}, \bibinfo{author}{J.~Saak},
    \newblock \bibinfo{title}{{RADI}: a low-rank {ADI}-type algorithm for large
    scale algebraic {R}iccati equations},
    \newblock \bibinfo{journal}{Numerische Mathematik} \bibinfo{volume}{138}
    (\bibinfo{year}{2017}) \bibinfo{pages}{301--330}.
    \DOIprefix\doi{10.1007/s00211-017-0907-5}.
    \bibitem[{Saak et~al.(2025)Saak, Köhler, and Benner}]{Saak2022}
    \bibinfo{author}{J.~Saak}, \bibinfo{author}{M.~Köhler},
    \bibinfo{author}{P.~Benner}, \bibinfo{title}{{M-M.E.S.S.} - the matrix
        equation sparse solver library}, \bibinfo{year}{2025}. \URLprefix
    \url{https://github.com/mpimd-csc/mmess}.
    \DOIprefix\doi{10.5281/ZENODO.632897}.
    \bibitem[{Golub and Van~Loan(2013)}]{Golub2013}
    \bibinfo{author}{G.~H. Golub}, \bibinfo{author}{C.~Van~Loan},
    \bibinfo{title}{Matrix computations}, \bibinfo{edition}{fourth} ed.,
    \bibinfo{publisher}{Johns Hopkins University Press}, \bibinfo{year}{2013}.
    \bibitem[{Kalman(1963)}]{Kalman1963}
    \bibinfo{author}{R.~E. Kalman},
    \newblock \bibinfo{title}{The theory of optimal control and the calculus of
        variations},
    \newblock in: \bibinfo{editor}{R.~Bellman} (Ed.),
    \bibinfo{booktitle}{Mathematical Optimization Techniques},
    \bibinfo{publisher}{University of California Press}, \bibinfo{year}{1963},
    pp. \bibinfo{pages}{309--331}. \DOIprefix\doi{10.1525/9780520319875-018}.
    \bibitem[{Moler and Stewart(1973)}]{Moler1973}
    \bibinfo{author}{C.~B. Moler}, \bibinfo{author}{G.~W. Stewart},
    \newblock \bibinfo{title}{An algorithm for generalized matrix eigenvalue
        problems},
    \newblock \bibinfo{journal}{SIAM Journal on Numerical Analysis}
    \bibinfo{volume}{10} (\bibinfo{year}{1973}) \bibinfo{pages}{241--256}.
    \DOIprefix\doi{10.1137/0710024}.
    \bibitem[{Penzl(1998)}]{Penzl1998}
    \bibinfo{author}{T.~Penzl},
    \newblock \bibinfo{title}{Numerical solution of generalized {L}yapunov
        equations},
    \newblock \bibinfo{journal}{Advances in Computational Mathematics}
    \bibinfo{volume}{8} (\bibinfo{year}{1998}) \bibinfo{pages}{33--48}.
    \DOIprefix\doi{10.1023/a:1018979826766}.
    \bibitem[{Corbin(2025)}]{Corbin2024}
    \bibinfo{author}{N.~A. Corbin}, \bibinfo{title}{{PPR} repository},
    \bibinfo{howpublished}{Available online:
        \url{https://github.com/cnick1/PPR}}, \bibinfo{year}{2025}. \URLprefix
    \url{https://github.com/cnick1/PPR}.
    \bibitem[{Penzl(1999)}]{Penzl1999}
    \bibinfo{author}{T.~Penzl},
    \newblock \bibinfo{title}{A cyclic low-rank {S}mith method for large sparse
        {L}yapunov equations},
    \newblock \bibinfo{journal}{SIAM Journal on Scientific Computing}
    \bibinfo{volume}{21} (\bibinfo{year}{1999}) \bibinfo{pages}{1401--1418}.
    \DOIprefix\doi{10.1137/s1064827598347666}.
    \bibitem[{Penzl(2000)}]{Penzl2000}
    \bibinfo{author}{T.~Penzl},
    \newblock \bibinfo{title}{Eigenvalue decay bounds for solutions of {L}yapunov
        equations: the symmetric case},
    \newblock \bibinfo{journal}{Systems \& Control Letters} \bibinfo{volume}{40}
    (\bibinfo{year}{2000}) \bibinfo{pages}{139--144}.
    \DOIprefix\doi{10.1016/s0167-6911(00)00010-4}.
    \bibitem[{Opmeer(2010)}]{Opmeer2010}
    \bibinfo{author}{M.~R. Opmeer},
    \newblock \bibinfo{title}{Decay of {H}ankel singular values of analytic control
        systems},
    \newblock \bibinfo{journal}{Systems \& Control Letters} \bibinfo{volume}{59}
    (\bibinfo{year}{2010}) \bibinfo{pages}{635--638}.
    \DOIprefix\doi{10.1016/j.sysconle.2010.07.009}.
    \bibitem[{Baker et~al.(2015)Baker, Embree, and Sabino}]{Baker2015}
    \bibinfo{author}{J.~Baker}, \bibinfo{author}{M.~Embree},
    \bibinfo{author}{J.~Sabino},
    \newblock \bibinfo{title}{Fast singular value decay for {L}yapunov solutions
        with nonnormal coefficients},
    \newblock \bibinfo{journal}{SIAM Journal on Matrix Analysis and Applications}
    \bibinfo{volume}{36} (\bibinfo{year}{2015}) \bibinfo{pages}{656--668}.
    \DOIprefix\doi{10.1137/140993867}.
    \bibitem[{Benner and Saak(2013)}]{Benner2013}
    \bibinfo{author}{P.~Benner}, \bibinfo{author}{J.~Saak},
    \newblock \bibinfo{title}{Numerical solution of large and sparse continuous
        time algebraic matrix {R}iccati and {L}yapunov equations: a state of the art
        survey},
    \newblock \bibinfo{journal}{{GAMM}-Mitteilungen} \bibinfo{volume}{36}
    (\bibinfo{year}{2013}) \bibinfo{pages}{32--52}.
    \DOIprefix\doi{10.1002/gamm.201310003}.
    \bibitem[{Allen and Cahn(1979)}]{Allen1979}
    \bibinfo{author}{S.~M. Allen}, \bibinfo{author}{J.~W. Cahn},
    \newblock \bibinfo{title}{A microscopic theory for antiphase boundary motion
        and its application to antiphase domain coarsening},
    \newblock \bibinfo{journal}{Acta Metallurgica} \bibinfo{volume}{27}
    (\bibinfo{year}{1979}) \bibinfo{pages}{1085--1095}.
    \DOIprefix\doi{10.1016/0001-6160(79)90196-2}.
    \bibitem[{Yang et~al.(2019)Yang, Ragnvaldsen, Bai, Yi, and Xu}]{Yang2019}
    \bibinfo{author}{Y.~Yang}, \bibinfo{author}{O.~Ragnvaldsen},
    \bibinfo{author}{Y.~Bai}, \bibinfo{author}{M.~Yi}, \bibinfo{author}{B.-X.
        Xu},
    \newblock \bibinfo{title}{3{D} non-isothermal phase-field simulation of
        microstructure evolution during selective laser sintering},
    \newblock \bibinfo{journal}{npj Computational Materials} \bibinfo{volume}{5}
    (\bibinfo{year}{2019}). \DOIprefix\doi{10.1038/s41524-019-0219-7}.
    \bibitem[{Li et~al.(2018)Li, Fan, Taniguchi, and Zhang}]{Li2018}
    \bibinfo{author}{J.-Q. Li}, \bibinfo{author}{T.-H. Fan},
    \bibinfo{author}{T.~Taniguchi}, \bibinfo{author}{B.~Zhang},
    \newblock \bibinfo{title}{Phase-field modeling on laser melting of a metallic
        powder},
    \newblock \bibinfo{journal}{International Journal of Heat and Mass Transfer}
    \bibinfo{volume}{117} (\bibinfo{year}{2018}) \bibinfo{pages}{412--424}.
    \DOIprefix\doi{10.1016/j.ijheatmasstransfer.2017.10.001}.
    \bibitem[{Ali et~al.(2021)Ali, Heider, and Markert}]{Ali2021}
    \bibinfo{author}{B.~Ali}, \bibinfo{author}{Y.~Heider},
    \bibinfo{author}{B.~Markert},
    \newblock \bibinfo{title}{Residual stresses in gas tungsten arc welding: a
        novel phase-field thermo-elastoplasticity modeling and parameter treatment
        framework},
    \newblock \bibinfo{journal}{Computational Mechanics} \bibinfo{volume}{69}
    (\bibinfo{year}{2021}) \bibinfo{pages}{565--587}.
    \DOIprefix\doi{10.1007/s00466-021-02104-3}.
    \bibitem[{Darabi et~al.(2023)Darabi, Azinpour, Reis, and de~Sa}]{Darabi2023}
    \bibinfo{author}{R.~Darabi}, \bibinfo{author}{E.~Azinpour},
    \bibinfo{author}{A.~Reis}, \bibinfo{author}{J.~C. de~Sa},
    \newblock \bibinfo{title}{Multi-scale multi-physics phase-field coupled
        thermo-mechanical approach for modeling of powder bed fusion process},
    \newblock \bibinfo{journal}{Applied Mathematical Modelling}
    \bibinfo{volume}{122} (\bibinfo{year}{2023}) \bibinfo{pages}{572--597}.
    \DOIprefix\doi{10.1016/j.apm.2023.06.021}.
    \bibitem[{Alla and Saluzzi(2020)}]{Alla2020}
    \bibinfo{author}{A.~Alla}, \bibinfo{author}{L.~Saluzzi},
    \newblock \bibinfo{title}{A {HJB}-{POD} approach for the control of nonlinear
        {PDE}s on a tree structure},
    \newblock \bibinfo{journal}{Applied Numerical Mathematics}
    \bibinfo{volume}{155} (\bibinfo{year}{2020}) \bibinfo{pages}{192--207}.
    \DOIprefix\doi{10.1016/j.apnum.2019.11.023}.
    \bibitem[{Dolgov et~al.(2021)Dolgov, Kalise, and Kunisch}]{Dolgov2021}
    \bibinfo{author}{S.~Dolgov}, \bibinfo{author}{D.~Kalise},
    \bibinfo{author}{K.~Kunisch},
    \newblock \bibinfo{title}{Tensor decomposition methods for high-dimensional
    {H}amilton-{J}acobi-{B}ellman equations},
    \newblock \bibinfo{journal}{{SIAM} Journal on Scientific Computing}
    \bibinfo{volume}{43} (\bibinfo{year}{2021}) \bibinfo{pages}{A1625--A1650}.
    \DOIprefix\doi{10.1137/19m1305136}.
    \bibitem[{Alla et~al.(2023)Alla, Kalise, and Simoncini}]{Alla2023}
    \bibinfo{author}{A.~Alla}, \bibinfo{author}{D.~Kalise},
    \bibinfo{author}{V.~Simoncini},
    \newblock \bibinfo{title}{State-{D}ependent {R}iccati {E}quation feedback
    stabilization for nonlinear {PDE}s},
    \newblock \bibinfo{journal}{Advances in Computational Mathematics}
    \bibinfo{volume}{49} (\bibinfo{year}{2023}).
    \DOIprefix\doi{10.1007/s10444-022-09998-4}.
    \bibitem[{Heiland and Werner(2023)}]{Heiland2023}
    \bibinfo{author}{J.~Heiland}, \bibinfo{author}{S.~W.~R. Werner},
    \newblock \bibinfo{title}{Low-complexity linear parameter-varying
    approximations of incompressible {N}avier-{S}tokes equations for truncated
        {S}tate-{D}ependent {R}iccati feedback},
    \newblock \bibinfo{journal}{IEEE Control Systems Letters} \bibinfo{volume}{7}
    (\bibinfo{year}{2023}) \bibinfo{pages}{3012--3017}.
    \DOIprefix\doi{10.1109/lcsys.2023.3291231}.
    \bibitem[{Ragab and Fayed(2017)}]{Ragab2017}
    \bibinfo{author}{S.~A. Ragab}, \bibinfo{author}{H.~E. Fayed},
    \bibinfo{title}{Introduction to finite element analysis for engineers},
    \bibinfo{publisher}{Taylor \& Francis Group}, \bibinfo{year}{2017}.
    \bibitem[{Reddy(2004)}]{Reddy2004}
    \bibinfo{author}{J.~N. Reddy}, \bibinfo{title}{An introduction to nonlinear
        finite element analysis}, \bibinfo{publisher}{Oxford University Press},
    \bibinfo{year}{2004}.
    \DOIprefix\doi{10.1093/acprof:oso/9780198525295.001.0001}.
    \bibitem[{IT(2025)}]{HealthIT}
    \bibinfo{author}{H.~IT}, \bibinfo{title}{How big is the internet and how do we
        measure it?},
    \bibinfo{howpublished}{\url{https://healthit.com.au/how-big-is-the-internet-and-how-do-we-measure-it/}},
    \bibinfo{year}{2025}. \bibinfo{note}{Accessed: 2025-10-31}.
    \bibitem[{Hespanha(2018)}]{Hespanha2018}
    \bibinfo{author}{J.~Hespanha}, \bibinfo{title}{Linear systems theory},
    \bibinfo{edition}{second} ed., \bibinfo{publisher}{Princeton University
        Press}, \bibinfo{year}{2018}. \DOIprefix\doi{10.23943/9781400890088}.

\end{thebibliography}
\end{document}